\def\rr{{\mathbb R}}
\def\rn{{{\rr}^n}}
\def\nn{{\mathbb N}}
\def\cp{{\mathcal P}}
\def\cm{{\mathcal M}}
\def\fz{\infty}
\def\az{\alpha}
\def\dist{{\mathop\mathrm{\,dist\,}}}
\def\loc{{\mathop\mathrm{\,loc\,}}}
\def\lip{{\mathop\mathrm{\,Lip}}}
\def\lz{\lambda}
\def\dz{\delta}
\def\bdz{\Delta}
\def\ez{\epsilon}
\def\bz{\beta}
\def\esup{\mathop\mathrm{\,esssup\,}}
\newtheorem{thm}{Theorem}[section]
\newtheorem{lem}[thm]{Lemma}
\newtheorem{prop}[thm]{Proposition}
\newtheorem{rem}[thm]{Remark}
\newtheorem{defn}[thm]{Definition}
\numberwithin{equation}{section}
\renewcommand{\epsilon}{\varepsilon}
\begin{document}
\title[Jacobian determinant]{Jacobian determinants for (nonlinear)
 gradient of planar $\fz$-harmonic functions and applications}
 {\let\thefootnote\relax\footnotetext{H. Dong was partially supported by the Simons Foundation, grant no. 709545, a Simons fellowship, grant no. 007638, and the NSF under agreement DMS-2055244.
 F. Peng was supported by China Postdoctoral Science Foundation funded project
 (No. BX20220328). Y. Zhang was supported by the Chinese Academy of Science and NSFC grant No. 11688101.
Y. Zhou was supported by NSFC (No. 11871088 \& No.12025102) and by the Fundamental Research Funds for the Central Universities.
\endgraf}}
\author{Hongjie Dong, Fa Peng, Yi Ru-Ya Zhang, and Yuan Zhou}


\date{\today}
\arraycolsep=1pt
\allowdisplaybreaks
 \maketitle

\begin{center}
\begin{minipage}{13.5cm}\small
 \noindent{\bf Abstract.}\quad
In dimension 2,  we introduce a  distributional Jacobian determinant $\det DV_\bz(Dv)$ for  the nonlinear complex gradient $V_\bz(Dv)=
|Dv|^\bz(v_{x_1},-v_{x_2})$ for any $\beta>-1$,
whenever $v\in W^{1,2 }_\loc$  and $\bz |Dv|^{1+\bz}\in W^{1,2}_\loc$. This is
  new when $\bz\ne0$. 

\quad Given  any planar $\infty$-harmonic function $u$, we show that such distributional Jacobian determinant $\det DV_\bz(Du)$  is a nonnegative Radon measure with some  quantitative  local lower and upper bounds. We also give the following two applications.
\begin{enumerate}

\item[(i)]  Applying  this {result} with $\bz=0$,  we  develop an approach to build up
a Liouville theorem, which improves that of Savin  \cite{s}.  {Precisely},  if $u$ is $\fz$-harmonic functions in whole $\rr^2$ with
$$
\liminf_{R\to\fz}\inf_{c\in\rr}\frac1R\fint_{B(0,R)}|u(x)-c|\,dx<\fz,
$$
then $u=b+a\cdot x$ for some $b\in\rr$ and $ a\in\rr^2$.

\item[(ii)] Denoting by $u_p$ the $p$-harmonic function having the same nonconstant boundary condition as $u$,
  we show that $\det DV_\bz(Du_p) \to \det DV_\bz(Du)$ as $p\to\fz$ in the weak-$\star$ sense in the   space of Radon measure.
Recall that $V_\bz(Du_p)$ is always quasiregular mappings, but  $V_\bz(Du)$ is not in general.
\end{enumerate}

 \medskip
  {\bf Keywords:} $\fz$-harmonic functions,  $p$-harmonic functions, Liouville
theorem, Jacobian determinant, quasiregular mappings


 \end{minipage}
 \end{center}

 \tableofcontents

\section{Introduction}

Let $\Omega$ be a  domain (connected open subset) in $ \rr^n$.
We say a function $u\in C^0(\Omega)$ is  $\fz$-harmonic if it is a viscosity solution to the $\fz$-Laplace equation
$$\Delta_\fz u=D^2uDu\cdot Du=0\quad\mbox{in}\ \Omega.$$
This equation  was derived by Aronsson  in 1960's as the  Euler-Lagrange equation for absolutely minimizers respect to the $L^\fz$-functional
$$
\mbox{$F(u,U)=\|\frac12|Du|^2\|_{L^\fz(U)}$ for domains $U\Subset\Omega$.}
$$
See \cite{a1,a2,a3,a4}.
For {a} probability interpretation (via Tug-of-War) of the $\fz$-Laplace equation, we refer  the reader  to \cite{pssw}.
Jensen \cite{j93} identified  absolute minimizers  with $\fz$-harmonic functions,
and moreover, built up their existence and uniqueness in bounded domains with continuous  boundary data. Since then, the regularity of $\fz$-harmonic functions has been a main issue in this field.  {Recall that} $\fz$-harmonic functions are always locally Lipschitz and hence differentiable almost everywhere.
{In view of} the $\fz$-harmonic function  $w=x_1^{4/3}-x^{4/3}_2$ in $\rr^n$ given by Aronsson \cite{a84},
it was conjectured in the literature  that  $\fz$-harmonic functions  have  $C^{1,1/3}$ and also $ W^{2,q}$ regularity with $q <3/2$.

Towards this conjecture, Crandall-Evans \cite{ce} first obtained a linear approximation property for
any $\fz$-harmonic function $u$: at each point $x$ and
for any sequence $\{r_j\}_{j\in\nn}$ converging to $0$, {there are} a subsequence $\{r_{j_k}\}_{k\in\nn}$  and also a vector $e$ depending on  $x$ and $\{r_{j_k}\}_{k\in\nn} $ such that
$$\lim_{k\to\fz}\sup_{z\in B(0,1)}\left|\frac{u(x+r_{j_k}z)-u(x)}{r_{j_k}} - e\cdot z  \right|=0 $$
and  $|e|=\lip u(x)$, where and { in the sequel}
  the pointwise Lipschitz constant of $u$ at   $x$  is defined {as}
$$\lip u(x)=\limsup_{x\ne y\to x}\frac{|u(y)-u(x)|}{|x-y|}.$$
The vector $e$ was then proved to be independent of the choice of subsequence, {which implies that} $u$ is differentiable at any point $x$. See Savin \cite{s} in dimension 2  based on  a planar topological argument
 and Evans-Smart \cite{es11a,es11b} in dimension $n\ge2$   via  some  PDE approach (flatness estimates).   In dimension $2$,   Savin \cite{s} further proved the $C^1$ regularity of $u$, and Evans-Savin \cite{es08}  {obtained} the $C^{1,\alpha}$-regularity of $u$ for some $0<\alpha<<1/3$.
Recently, it was proved in \cite{kzz} that
$|Du|^\alpha \in W^{1,2}$ for any $\alpha>0$, which is sharp as $\az\to 0$ as witted by  $x_1^{4/3}-x_2^{4/3}$.
Moreover, the distributional determinant of Hessian,  $-\det D^2u$,
 was proved in \cite{kzz} to be a  Radon measure  (in short $-\det D^2u\in \cm(\Omega)$) enjoying  the lower bound  $-\det D^2u\ge |D|Du||^2\,dx $ in measure  sense,
i.e.,
\begin{equation}  \label{xe1.1}
\int_\Omega-\det D^2u \psi\,dx\ge  \int_\Omega|D|Du | |^2\psi\,dx
 \quad \forall \ 0\le \psi\in C_c^0(\Omega),
 \end{equation}
and also the upper bound
\begin{equation*}
\int_{\frac12B}-\det D^2u\,dx\le C
\fint_{ B} |Du|^2\,dx \quad\forall\  B\Subset\Omega.
\end{equation*}
Recall that,  for any  $v\in W^{1,2}_\loc(\Omega)$, the distributional determinant $-\det D^2v$ is given by
\begin{align}\label{xe1.3}
-\int_\Omega\det D^2v\psi\,dx
&= \frac12\int_\Omega (D^2\psi Dv\cdot Dv) \,dx
-\frac{1}{ 2}
\int_\Omega|Dv|^{ 2} \bdz\psi \,dx \quad\forall\,\psi\in C_c^\fz(\Omega).
\end{align}

The main purpose of this paper is two-fold. First, via  the distributional determinant of Hessian we develop  a   new approach to build up a {gradient estimate and} Liouville theorem  for planar $\fz$-harmonic functions. See Theorem \ref{thm1.1} below.
Recall that Aronsson  \cite{a68} initiated the study of such Liouville theorems by proving that  planar $\infty$-harmonic functions of  $C^2(\rr^2)$ must be affine functions.
In the sequel, we denote by $\cp$  the collection of affine functions $P(x)=b+a\cdot x$    for some $b\in\rr$ and $a\in\rr^2$.
Evans \cite{e93} obtained an analogue result for   $\infty$-harmonic functions of $C^4(\rn)$ with $n\ge3$.
In all dimensions $n\ge2$, Crandall-Evans-Gariepy \cite{ceg} showed that any bounded  $\fz$-harmonic function   in  $\rr^n$   must be a constant, and also that  any  $\fz$-(sub)harmonic function  in $\rn$ bounded from above by some affine function $P$  must be  given by $P$.
In the plane, from the $C^1$-regularity and a compactness argument,  Savin \cite{s}  proved that any  $\fz$-harmonic function $u$ in $\rr^2$
with  the linear growth at $\fz$ (i.e., $|u(x)|\le C(1+|x|)$) must be an affine function.   However a high dimensional analogue is quite open.

We obtain the following {interior gradient estimate and} Liouville theorem in {the} plane, { the latter of} which improves that of Savin \cite{s} { mentioned above}.
 \begin{thm}\label{thm1.1}
$(i)$  {Let $u$ be an $\fz$-harmonic in a domain $\Omega\subset \rr^2$. Then we have
 $$|Du(x)|\le\frac{C }{r} \fint_{B(x, r)}| u|\,dy\quad\mbox{whenever $B(x, r)\subset\Omega$}  $$
 and hence
\begin{align*}
\|Du \|_{L^\fz(B(x,r))}\le  \frac C {r^{3}}\|u\|_{L^1(B(x,2r))}\quad \mbox{whenever $B(x,2r)\subset\Omega$}.
\end{align*}}

$(ii)$ Let $u $ be an $\fz$-harmonic function in $\rr^2$ with
\begin{align*}
\liminf_{R\to\fz}\inf_{c\in\rr}\frac1R\fint_{B(0,R)}|u(x)-c|\,dx<\fz.
\end{align*}
Then $u\in\cp$, i.e., $u(x)=u(0)+a\cdot x$ in $\rr^2$ for some vector $a\in\rr^2$.
\end{thm}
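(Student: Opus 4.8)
The plan is to derive part $(i)$ from the measure estimates on $-\det D^2u$ recalled in the introduction (namely \eqref{xe1.1} together with the upper bound), and then obtain part $(ii)$ as a consequence of the scale-invariant gradient estimate in $(i)$ by sending $r\to\infty$. For $(i)$, fix $B(x,r)\subset\Omega$. After a vertical translation we may assume $\fint_{B(x,2r)}u\,dy=0$; by scaling we may take $x=0$ and $r=1$. The key identity is the Pohozaev-type/divergence-structure formula \eqref{xe1.3} for the distributional Hessian determinant applied to a suitable cutoff $\psi$. Combining \eqref{xe1.1} with the upper bound gives
\begin{align*}
\int_{B(0,1/2)}|D|Du||^2\,dy\le\int_{B(0,1/2)}-\det D^2u\,dy\le C\fint_{B(0,1)}|Du|^2\,dy,
\end{align*}
and, iterating on dyadic balls, $\|D|Du|\|_{L^2}$ on a smaller ball is controlled by the $L^2$ norm of $Du$ on a larger one. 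Since $|Du|$ is (by $C^1$-regularity, Savin \cite{s}) continuous, a Sobolev/Morrey-type argument in the plane together with the fact that $|Du|$ satisfies a maximum principle-type bound (it is $\infty$-subharmonic-controlled) upgrades the $L^2$-gradient bound to the pointwise bound $|Du(0)|\le C\fint_{B(0,1)}|Du|\,dy$; and a Caccioppoli/interpolation step replaces $\fint|Du|$ by $\fint|u|$ on a slightly larger ball. Rescaling yields the first inequality of $(i)$, and the second follows by applying the first on $B(x,r)$ for $x$ ranging over $B(x_0,r)$ and using $\fint_{B(x,r)}|u|\le Cr^{-2}\|u\|_{L^1(B(x_0,2r))}$.

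For $(ii)$, take the hypothesis: there is a sequence $R_k\to\infty$ and constants $c_k$ with $\frac1{R_k}\fint_{B(0,R_k)}|u-c_k|\,dx\le M<\infty$. Replacing $u$ by $u-c_k$ (which is again $\infty$-harmonic) and applying the gradient estimate from $(i)$ on the ball $B(0,R_k/2)$ — legitimate since $u$ is $\infty$-harmonic on all of $\rr^2$ — we get, for every fixed compact set $K$ and all $k$ large,
\begin{align*}
\sup_{K}|Du|\le\frac{C}{R_k}\fint_{B(0,R_k)}|u-c_k|\,dx\le\frac{CM}{\,} \cdot\frac{1}{1}
\end{align*}
wait — more precisely $\sup_{B(0,R_k/4)}|Du|\le C\,\frac1{R_k/2}\fint_{B(0,R_k/2)}|u-c_k|\,dx\le C'M$ uniformly in $k$ (absorbing the ratio of ball radii into the constant). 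Letting $k\to\infty$ shows $Du\in L^\infty(\rr^2)$ with $\|Du\|_\infty\le C'M$. So $u$ has (at most) linear growth, $|u(x)|\le|u(0)|+C'M|x|$. At this point one invokes Savin's Liouville theorem \cite{s} for $\infty$-harmonic functions of linear growth to conclude $u\in\cp$; alternatively, and this is the self-contained route I would prefer, apply $(i)$ once more at an arbitrary point $x_0$ on the ball $B(x_0,R)$ and use the already-established bound $|u(y)-u(x_0)|\le C'M|y-x_0|$ to get $|Du(x_0)|\le \frac{C}{R}\fint_{B(x_0,R)}|u-u(x_0)|\,dy\le C''M$; the point is that now we also control the \emph{oscillation} of $Du$: since $|D|Du||\in L^2_{\loc}$ with the bound from \eqref{xe1.1}, running the dyadic iteration out to scale $R$ and letting $R\to\infty$ forces $\int_{\rr^2}|D|Du||^2\,dx<\infty$ is too weak on its own, so instead one uses that $-\det D^2u\ge0$ is a nonnegative measure of total mass controlled by $\lim_{R}R^{-2}\int_{B(0,R)}|Du|^2\le C M^2<\infty$, hence a \emph{finite} measure on $\rr^2$; a blow-down argument ($u_R(x):=u(Rx)/R$, which is $\infty$-harmonic, uniformly Lipschitz, and converges locally uniformly along a subsequence to an $\infty$-harmonic $u_\infty$ whose Hessian-determinant measure vanishes) combined with the rigidity statement \eqref{xe1.1} (equality forces $|D|Du_\infty||=0$, so $|Du_\infty|$ is constant, and a constant-gradient $\infty$-harmonic function in the plane is affine) shows the blow-down is linear; finally the linear-approximation property of Crandall–Evans propagates this back to $u$, giving $u\in\cp$.

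The main obstacle, I expect, is the passage in $(i)$ from the $L^2$ bound on $D|Du|$ to the \emph{pointwise} sup bound on $|Du|$ with the correct scale-invariant constant: the Sobolev embedding $W^{1,2}\hookrightarrow L^q$ in the plane is not into $L^\infty$, so one cannot conclude $\|Du\|_\infty\lesssim\fint|Du|$ purely from \eqref{xe1.1}. This gap must be bridged using structural information special to $\infty$-harmonic functions — for instance that $|Du|$ (equivalently $\lip u$) is itself subject to comparison with cones / a weak Harnack inequality, so that its sup is comparable to its average; this is presumably where the bulk of the work lies, and where the $\beta=0$ case of the paper's new distributional Jacobian machinery, or the quantitative bounds on $\det DV_0(Du)$ alluded to in the abstract, enters. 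The second delicate point is justifying the blow-down convergence and the rigidity case of equality in \eqref{xe1.1}; compactness is fine thanks to the uniform Lipschitz bound, but identifying the limit's determinant measure as zero and extracting affineness requires care with weak-$\star$ convergence of the measures $-\det D^2u_R$ and lower semicontinuity.
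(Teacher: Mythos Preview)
Your proposal has genuine gaps in both parts, and in part $(i)$ the paper takes a route that bypasses exactly the obstacle you flag.

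\medskip

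\textbf{Part $(i)$.} You correctly identify that the chain $-\det D^2u\ge |D|Du||^2$ together with the upper bound $\int_{B/2}-\det D^2u\le C\fint_B|Du|^2$ only yields $D|Du|\in L^2_\loc$, and that $W^{1,2}\not\hookrightarrow L^\fz$ in the plane. This is a real gap, and the paper does not try to close it along those lines. Instead, the key new input is an identity (Lemma~\ref{lem2.2}) for $(-\det D^2u)\,u^2$:
\[
\int(-\det D^2u)u^2\psi+\int|Du|^4\psi=-\int|Du|^2(Du\cdot D\psi)u+\tfrac12\int u^2\big[|Du|^2\Delta\psi-D^2\psi Du\cdot Du\big].
\]
Since $-\det D^2u\ge0$, this immediately gives an $L^4$ Caccioppoli inequality $\|Du\|_{L^4(B)}\le Cr^{-1}\|u\|_{L^4(2B)}$ (Lemma~\ref{lem2.3}). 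Now $L^4$ is above the critical exponent in dimension $2$: Morrey's inequality is available, and after an interpolation/iteration step one gets $r^{5/2}\|Du\|_{L^4(B)}\le C\|u\|_{L^1(2B)}$. The pointwise bound then follows from Morrey plus comparison with cones. The point is that the $u^2$ weight in the determinant identity produces the $|Du|^4$ term for free; your route through $|D|Du||\in L^2$ never sees this.

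\medskip

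\textbf{Part $(ii)$.} Your reduction to $\|Du\|_{L^\fz(\rr^2)}<\fz$ via $(i)$ is correct, and the blow-down idea is the right one. But the argument as written does not close. Knowing only that $-\det D^2u$ is a \emph{finite} Radon measure on $\rr^2$ (which is what your bound $\lesssim M^2$ gives) is not enough to conclude $|Du|$ is constant; and the step ``the blow-down $u_\fz$ has Hessian-determinant measure zero'' is asserted without mechanism. The paper supplies the missing ingredient in Proposition~\ref{lem2.8}: an \emph{improved} upper bound
\[
\int_{\frac14 B}-\det D^2u\,dx\le C\inf_{P\in\cp}\big\|\tfrac{u-P}{r}\big\|_{L^\fz(B)}\Big[|DP|+\big\|\tfrac{u-P}{r}\big\|_{L^\fz(B)}\Big],
\]
proved via the elementary observation $\det D^2u=\det D^2(u-P)$ together with Lemma~\ref{lem2.7} (an $L^2$ flatness estimate for $Du-DP$). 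Taking $P(x)=a\cdot x$ from the blow-down (Lemma~\ref{lem2.10}) and $r=m_j\to\fz$, the right-hand side tends to $0$, so $\int_{\rr^2}-\det D^2u=0$. Then \eqref{xe1.1} forces $|D|Du||\equiv0$, hence $|Du|$ is constant, and one finishes with the Crandall--Evans rigidity (Lemma~\ref{lem2.9}). Your sketch was reaching for this, but without Proposition~\ref{lem2.8} you cannot pass from ``finite'' to ``zero''.
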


Our approach {of the proof of the Liouville theorem} is completely different from that of Savin \cite{s}.
The crucial point is that, given any  $\fz$-harmonic  function $u$ in a planar domain $\Omega$,

\begin{enumerate}
 \item[$\bullet$]
   in Lemma \ref{lem2.2}, we  derive the   identity
\begin{align*}
 \int_\Omega(-\det D^2u) u^2\psi\,dx&= - \int_\Omega |Du|^4\psi \,dx -\int_\Omega |Du|^2 (Du\cdot D\psi) u \,dx\\
&\quad +\frac12\int_\Omega u^2[ |Du|^2\Delta\psi -D^2\psi Du\cdot Du] \,dx\quad\forall\, \psi\in C_c^\fz(\Omega), \nonumber
\end{align*}
{ which implies Theorem \ref{thm1.1} $(i)$;}
\item[$\bullet$]
 in Proposition \ref{lem2.8}, we  obtain an upper bound which improves the one in \cite{kzz}:
$$\int_{\frac14 B}-\det D^2u\,dx\le C  \inf_{P\in\cp}\left\|\frac{u-P}r\right\|_{L^\fz( B)}\Big[|DP|+\|\frac{u-P}r\|_{L^\fz( B)} \Big]
\quad\forall\ B=B(x,r) \Subset\Omega.$$
\end{enumerate}
Both results have their own interests. These, together with \eqref{xe1.1} and some basic properties, allow  us to obtain  Theorem \ref{thm1.1} {$(ii)$}.
See Section \ref{sec2} for details.

Next, inspired by the limiting behaviors of planar $p$-harmonic functions (see the end of this section for details),
  we are interested in the Jacobian determinants
of the nonlinear complex gradient
$$
V_\bz(Du)=|Du|^{\beta}(u_{x_1},-u_{x_2})\quad\text{with}\,\bz>-1
$$
for planar $\fz$-harmonic functions.
In the special case $\bz=0$,   $\det DV_0(Du)=-\det D^2u$ is already defined by \eqref{xe1.3} as a distribution.
 However, in the general case $\bz\ne0$, since the Sobolev regularity of $|Du|^\bz Du $ is quite open,
there is no appropriate  definition for
$\det DV _\bz(Dv)=-\det D\big[|Du|^\bz Du\big]$   available in the literature.
In this paper, we find out  the following distributional definition,
which has its own interest.

\begin{defn}\label{def1.2}
Let $\Omega\subset\rr^2$ be a domain.  For any $\beta>-1$ and $v\in W^{1,2}_\loc(\Omega)$ satisfying $\beta |Dv|^{\bz+1}\in W^{1,2}_\loc(\Omega)$, we define
  $\det DV_\beta(Dv)$ as a  distribution  by
\begin{align}\label{xe1.5}
\int_\Omega\det DV_\beta(Dv)\psi\,dx
&:=-\frac12\int_\Omega|Dv|^{2\beta} (D^2\psi Dv\cdot Dv) \,dx
+\frac{1}{2\beta+2}
\int_\Omega|Dv|^{2\beta+2} \bdz\psi \,dx\\
&\ \quad-\frac{\beta}{\beta+1}
\int_\Omega[D |Dv| ^{\bz+1 }\cdot D v] (Dv\cdot D\psi)|Dv|^{\beta-1} \,dx\quad \forall\,   \psi\in C^\fz_c(\Omega). \nonumber
\end{align}
 \end{defn}

Compared to \eqref{xe1.3} in the case when $\bz=0$,   we  need the additional assumption  $ |Du|^{1+\bz}\in W^{1,2}_\loc(\Omega)$ in the case when $\bz\ne 0$.

Before  using Definition \ref{def1.2}, we must first verify  that it makes sense.
To be precise,
if $V_\bz(Dv)\in W^{1,2}_\loc(\Omega)$ a priori, 
then  we have a pointwise defined Jacobian determinant $\det DV _\bz(Dv)\in L^1_\loc(\Omega)$. On the other hand, since  $V_\bz(Dv)\in W^{1,2}_\loc(\Omega)$ implies
   $|Dv|^{1+\bz}\in W^{1,2}_\loc(\Omega)$, Definition \ref{def1.2} gives a  distributional Jacobian determinant $\det DV _\bz(Dv)$. One has to show the  coincidence between the pointwise definition and {the} distributional definition of  $\det DV _\bz(Dv)$.
In the case when $\bz=0$,  such coincidence is well known. Indeed,
 such coincidence holds  for smooth functions $v\in C^3(\Omega)$
directly via the divergence of structure of  $-\det D^2v$, and then  for  $v\in W^{2,2}_\loc(\Omega)$ via a standard approximation argument and linearity of $Dv$ and $D^2v$.
However, in the general case when $ \bz\ne0$,  due to several essential difficulties caused by
the nonlinear structure of $ V_\bz(Dv)$,  essentially new ideas are required to get such coincidence.
Eventually we are able to prove such coincidence via
\begin{enumerate}
\item[$\bullet$]
a  nonlinear second order estimate to inhomogeneous $(\bz+2)$-Laplace equations  by Cianchi-Mazya \cite{cm}.
\item[$\bullet$] a fundamental structural identity and a divergence structure for $-\det D[(|Dv|^2+\ez)^{{\bz/2}}Dv]$ with $v\in C^\fz(\Omega)$ and $\ez>0$. See Lemmas \ref{lem3.7} and \ref{lem3.2}.
\item[$\bullet$] a divergence  structure of $-\det D^2v$ with $v\in C^\fz$ and its connection with $\Delta_\fz v$. See Lemmas \ref{lem2.1}  and \ref{lem3.6}.
\end{enumerate}
See Section \ref{sec3} for the proof.  In Section \ref{sec4}, we present some useful properties {of}  the distributional $ \det DV_\beta( Dv)$.

For any planar $\fz$-harmonic function $u$ and any $\bz>-1$, since
  $|Du|^{\bz+1}\in W^{1,2}_\loc$ as proved in \cite{kzz},  the
 distributional Jacobian determinant $\det DV _\bz(Dv)$ is  defined by \eqref{xe1.5}. Recalling that
  $D|Du|^{\bz+1}\cdot Du=0$  almost everywhere (see \cite{kzz}), for any  $\psi\in C^\fz_c(\Omega)$ one has
\begin{align}\label{xe1.6}
\int_\Omega\det DV_\beta(Du)\psi\,dx
& =-\frac12\int_\Omega |Du|^{2\beta} (D^2\psi Du\cdot Du) \,dx
+\frac{1}{2\bz+2}
\int_\Omega|Du|^{2\beta+2} \bdz\psi \,dx.
\end{align}
We then obtain the following result.

\begin{thm}\label{thm1.3}
Let $\Omega\subset\rr^2$ be a domain, and let    $\beta>-1$.    For any $\fz$-harmonic function $u$ in $\Omega$, we have  $\det DV_\beta(Du )\in \cm(\Omega)$
with the lower bound
   \begin{align}\label{xe1.7}\int_\Omega\det DV_\beta(Du )\psi\,dx\ge \frac1{\bz+1}\int_\Omega|D|Du |^{\bz+1}|^2\psi\,dx
 \quad \forall\, 0\le \psi\in C_c^0(\Omega)
\end{align}
and  the upper bound
   \begin{align}\label{xe1.8}\int_{\frac12 B} \det DV_\beta(Du )\,dx\le C\big(1+\frac 1 {\beta+1}\big)  \fint_{B}|Du|^{2+2\bz}\,dx \quad \forall\, B\Subset\Omega,
   \end{align}
 where $C>0$ is a universal constant.
\end{thm}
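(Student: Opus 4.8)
The plan is to reduce the assertion to the already-known case $\beta=0$ via an exact pointwise identity, and then to transfer the bounds to $u$ by letting $p\to\infty$ along $p$-harmonic approximations. The algebraic starting point is the identity
\[
\det DV_\beta(Dv)=-(1+\beta)\,|Dv|^{2\beta}\det D^2v ,
\]
valid at every point where $v$ is twice differentiable and $Dv\ne0$. Indeed, writing $J=\mathrm{diag}(1,-1)$, one computes $DV_\beta(Dv)=|Dv|^{\beta}JD^2v+\beta|Dv|^{\beta-2}(JDv)(D^2vDv)^{T}$, and the matrix determinant lemma, together with the $2\times2$ identities $\mathrm{adj}(JD^2v)=-\mathrm{adj}(D^2v)J$, $J(JDv)=Dv$ and $(D^2vDv)^{T}\mathrm{adj}(D^2v)Dv=|Dv|^2\det D^2v$, shows that the rank-one correction contributes exactly $-\beta|Dv|^{2\beta}\det D^2v$ on top of $\det(|Dv|^{\beta}JD^2v)=-|Dv|^{2\beta}\det D^2v$. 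Using the coincidence between the distributional determinant of Definition \ref{def1.2} and the a.e.\ pointwise one established in Section \ref{sec3} (applicable to any $v$ with $V_\beta(Dv)\in W^{1,2}_\loc$), and recalling that a planar $p$-harmonic function $u_p$ lies in $W^{2,2}_\loc$ with a discrete critical set, so that $V_\beta(Du_p)\in W^{1,2}_\loc$ for $\beta>-1$, we get for every such $u_p$ the a.e.\ identity $\det DV_\beta(Du_p)=(1+\beta)|Du_p|^{2\beta}(-\det D^2u_p)$.

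I would then record two pointwise facts about $u_p$ with $p\ge2$. First, the complex gradient of a planar $p$-harmonic function is quasiregular, and a $\partial_z,\partial_{\bar z}$-computation promotes this to: $V_\beta(Du_p)$ is $K_p$-quasiregular for every $\beta>-1$; hence $\det DV_\beta(Du_p)\ge0$ a.e., equivalently $-\det D^2u_p\ge0$. Second, at a point where $Du_p\ne0$ one may choose coordinates with $Du_p=(|Du_p|,0)$; then $\Delta_pu_p=0$ forces $\partial_{11}u_p=-\tfrac1{p-1}\partial_{22}u_p$, so that
\[
-\det D^2u_p=\tfrac1{p-1}(\partial_{22}u_p)^2+(\partial_{12}u_p)^2\ \ge\ (\partial_{11}u_p)^2+(\partial_{12}u_p)^2=|D|Du_p||^2\qquad(p\ge2),
\]
the inequality because $\tfrac1{p-1}\ge\tfrac1{(p-1)^2}$. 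Multiplying by $(1+\beta)|Du_p|^{2\beta}\ge0$ and using $D|Du_p|^{\beta+1}=(\beta+1)|Du_p|^{\beta}D|Du_p|$ a.e., we obtain $\det DV_\beta(Du_p)\ge\tfrac1{\beta+1}|D|Du_p|^{\beta+1}|^2$ a.e.\ for all $p\ge2$.

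To conclude, fix a ball $B'\Subset\Omega$ and let $u_p$ be $p$-harmonic in $B'$ with $u_p=u$ on $\partial B'$. Then $u_p\to u$ uniformly on $\overline{B'}$ with uniform interior Lipschitz bounds, hence $Du_p\to Du$ in $L^q_\loc(B')$ for every $q<\infty$; moreover, by \cite{kzz}, $\||Du_p|^{\beta+1}\|_{W^{1,2}_\loc(B')}$ is bounded uniformly in $p$, so $|Du_p|^{\beta+1}\rightharpoonup|Du|^{\beta+1}$ weakly in $W^{1,2}_\loc(B')$. Evaluating $\int\det DV_\beta(Du_p)\psi$ through \eqref{xe1.5}: the first two terms converge to the corresponding terms of \eqref{xe1.6} for $u$ by dominated convergence (their integrands being controlled by $C\|D^2\psi\|_\infty|Du_p|^{2\beta+2}$ and $\tfrac1{\beta+1}\|\Delta\psi\|_\infty|Du_p|^{2\beta+2}$ uniformly in $p$), while the third term equals $\int D|Du_p|^{\beta+1}\cdot\Phi_p\,dx$ with $\Phi_p=-\tfrac{\beta}{\beta+1}(Du_p\cdot D\psi)|Du_p|^{\beta-1}Du_p$; since $|\Phi_p|\le\tfrac{|\beta|}{\beta+1}|Du_p|^{\beta+1}|D\psi|$ is bounded, $\Phi_p\to-\tfrac{\beta}{\beta+1}(Du\cdot D\psi)|Du|^{\beta-1}Du$ in $L^2_\loc(B')$, and the weak–strong pairing tends to $-\tfrac{\beta}{\beta+1}\int[D|Du|^{\beta+1}\cdot Du](Du\cdot D\psi)|Du|^{\beta-1}\,dx=0$, because $D|Du|^{\beta+1}\cdot Du=0$ a.e.\ by \cite{kzz}. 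Thus $\int\det DV_\beta(Du_p)\psi\to\int\det DV_\beta(Du)\psi$ for all $\psi\in C^\fz_c(B')$, while for $0\le\psi$ weak lower semicontinuity of $g\mapsto\int g^2\psi$ yields $\liminf_{p}\int|D|Du_p|^{\beta+1}|^2\psi\ge\int|D|Du|^{\beta+1}|^2\psi$; letting $p\to\infty$ in the $p$-level inequality gives \eqref{xe1.7} for $0\le\psi\in C^\fz_c(B')$, hence, exhausting $\Omega$ by such balls and approximating, for all $0\le\psi\in C^0_c(\Omega)$. In particular $\det DV_\beta(Du)$ is a nonnegative distribution, so it is a nonnegative Radon measure. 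Finally \eqref{xe1.8} follows by testing \eqref{xe1.6} against $\psi\in C^\fz_c(B)$ with $\psi\equiv1$ on $\tfrac12B$, $0\le\psi\le1$ and $|D^2\psi|+|\Delta\psi|\le C/r^2$: by nonnegativity $\int_{\frac12B}\det DV_\beta(Du)\,dx\le\int\psi\,d(\det DV_\beta(Du))$, and \eqref{xe1.6} bounds the right side by $\tfrac12\int_B|D^2\psi||Du|^{2\beta+2}+\tfrac1{2\beta+2}\int_B|\Delta\psi||Du|^{2\beta+2}\le C(1+\tfrac1{\beta+1})\fint_B|Du|^{2\beta+2}$.

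The main obstacle is the limit $p\to\infty$: one must make the algebraic identity and the quasiregularity of $V_\beta(Du_p)$ compatible with the (discrete, null) critical set of $u_p$ so that the a.e.\ identities and inequalities are legitimate, borrow from \cite{kzz} the uniform $W^{1,2}_\loc$ bound on $|Du_p|^{\beta+1}$ (the decisive ingredient), and handle the third term of \eqref{xe1.5} for $u_p$, which does not vanish pointwise but only in the weak limit. The regime $\beta\to-1^{+}$ needs slightly more care, since $|Du_p|^{\beta-1}$ is then unbounded near critical points; but it always occurs with a compensating factor $|Du_p|^2$, hence stays controlled by powers of $|Du_p|$ with positive exponent.
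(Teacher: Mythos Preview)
Your overall strategy is sound and is precisely the alternative route the paper acknowledges in the Remark at the end of Section~\ref{sec6}: instead of the exponential regularization $u^\ez$ used in Section~\ref{sec5}, you approximate by $p$-harmonic functions $u_p$ and let $p\to\infty$. Your pointwise identity $\det DV_\beta(Dv)=-(1+\beta)|Dv|^{2\beta}\det D^2v$ is a clean consolidation of Lemmas~\ref{lem2.1}, \ref{lem3.6} and \ref{lem3.7} (with $\ez=0$), and your adapted-coordinate computation of $-\det D^2u_p\ge |D|Du_p||^2$ for $p\ge2$ reproduces Lemma~\ref{lem6.1}. The paper's route has the advantage that the decisive uniform bound $|Du^\ez|^{\beta+1}\in W^{1,2}_\loc$ is already supplied by \cite{kzz}, whereas for $u_p$ it must be earned separately; your route is more direct once that bound is in hand.

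Two steps in your argument are not justified as written. First, the implication ``uniform interior Lipschitz bounds, hence $Du_p\to Du$ in $L^q_\loc$'' is false in general (consider $u_p(x)=p^{-1}\sin(p x_1)$): uniform convergence of $u_p$ together with uniform Lipschitz control gives only weak-$\ast$ convergence of $Du_p$. The strong convergence $Du_p\to Du$ in $L^q_\loc$ is the nontrivial result of \cite{ll}, and its proof (sketched in the paper's proof of Theorem~\ref{thm1.5}) requires the uniform $W^{1,2}_\loc$ bound on $|Du_p|$ together with a flatness estimate to identify the limit. Second, your citation of \cite{kzz} for the uniform bound on $\||Du_p|^{\beta+1}\|_{W^{1,2}_\loc}$ is misplaced: \cite{kzz} treats $u^\ez$, not $u_p$. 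For $u_p$ this bound is Theorem~\ref{thm1.4}/Lemma~\ref{lemma6.2}, and you must supply the absorption argument: test \eqref{xe1.5} with $\phi^2$, apply Young's inequality to the third (nonvanishing for $u_p$) term to produce $\tfrac{1}{2(\beta+1)}\int|D|Du_p|^{\beta+1}|^2\phi^2$, and absorb it using your own lower bound $\det DV_\beta(Du_p)\ge\tfrac1{\beta+1}|D|Du_p|^{\beta+1}|^2$. Without these two ingredients the passage to the limit---both the convergence of the first two terms of \eqref{xe1.5} and the weak lower semicontinuity step---is unjustified.
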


We prove Theorem \ref{thm1.3}  in Section \ref{sec5}.
By using Lemmas \ref{lem2.1}, \ref{lem3.6}, \ref{lem3.7}, 4.1, and 4.2, we  build up some analogue lower and upper bounds
for exponential $e^{\frac1{2\ez}|\xi|^2}$-harmonic function $u^\ez$ in $U\Subset\Omega$,   which are uniform in $\ez\in(0,1)$. {As a consequence}, we conclude that  $ \det DV _\bz(Du^\ez)\in \cm(U)$ and
  $ \det DV _\bz(Du^\ez)\to  \det DV _\bz(Du )$ in the weak-$\star$ sense in $\cm(U)$.

{Our original motivation of Definition  \ref{def1.2} and Theorem \ref{thm1.3} is to study
planar $p$-harmonic functions $u_p$ and their limiting behavior as $p\to\fz$.} A function $v\in W^{1,p}_\loc(\Omega)$ is called $p$-harmonic if it is a weak solution to
$$ -\Delta_pv={\rm div}(|Dv|^{p-2}Dv)=0\quad\mbox{in $\Omega$.}$$
See \cite{ps} for { a} probability interpretation by using Tug-of-War with noise.
We refer the reader to Iwaniec-Manfredi \cite{im89} and Aronsson \cite{a89} for the   $C^{k,\alpha}$ and $W^{k+2,q}_\loc$-regularity of $u_p$ with optimal $k$, $\alpha$, and $q$.
In the literature,  the interior regularity of $p$-harmonic functions  in any dimension has been extensively studied. See \cite{u68,u77,e82,d83,l83,bi83,t84,m88,mw88,dfzz,ss}.

Moreover, for each $\bz>-1$ and $1<p<\fz$,  the nonlinear complex gradient $V_\bz(Du_p)$ was well-known to be  a quasiregular mapping. Precisely,
\begin{equation} \label{xe1.9}
\mbox{$V_\bz(Du_p)\in W^{1,2}_\loc(\Omega)$ and \ }|DV_\bz(Du_p)|^2\le [K(p,\beta)+\frac1{K(p,\beta)}]\det D V_\bz(Du_p)
 \quad \mbox{a.\,e.\ in $\Omega$},
\end{equation}
where
$$   K(p,\bz)=\max\left\{ \frac{p-1}{\bz+1},\frac{\bz+1}{p-1},
\bz+1,\frac1{\bz+1} \right\},$$
{which} leads to a pointwise defined  Jacobian determinant  $ \det DV _\bz(Du_p)\in L^1_\loc(\Omega)$.
We refer the reader to Bojarski-Iwaniec  \cite{bi87},  Manfredi \cite{m88}, Iwaniec-Manfredi \cite{im89}, and Aronsson \cite{a89}. {See also}  Lemma \ref{lema.3} Remark \ref{remA.4}   for the sharpness of the constant  in \eqref{xe1.9} {by using} some construction in \cite{im89}.

In this paper, we obtain the following lower and upper bounds, where  the constant is uniform in $p\ge 2$ and hence  is quite different from above quasiregular properties.

\begin{thm}\label{thm1.4}
Let  $\beta>-1$ and $1<p<\fz$. For any $p$-harmonic functions $u_p$  in {a} given planar domain $\Omega$ one has the lower bound
   \begin{align*}
   \det DV_\beta(Du_p )
& \ge \frac{\min\{1,p-1\}}{\bz+1}
  |D  |D u_p|^{\bz+1}  |^2
\quad{\rm a. \, e.},
  \end{align*}
and  upper bound
   \begin{align*} \int_{\frac12B}
   \det DV_\beta(Du_p )\,dx\le C
   \big[1+  \frac1{1+\bz}+\frac1{p-1}\frac{\bz^2}{\bz+1}\big] \fint_{B}|Du_p|^{2+2\bz}\,dx \quad\forall\ \text{ball} \ B\Subset\Omega.
\end{align*}
\end{thm}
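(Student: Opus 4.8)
\textbf{Proof proposal for Theorem \ref{thm1.4}.}
The plan is to mimic the structure of the proof of Theorem \ref{thm1.3}, but to carry along the (now nonzero) term coming from $D|Du_p|^{\bz+1}\cdot Du_p$, which for $\fz$-harmonic functions vanished. Since $u_p$ is $p$-harmonic, it is $C^{1,\alpha}_\loc$ and by Iwaniec--Manfredi \cite{im89} and Aronsson \cite{a89} we have $V_\bz(Du_p)\in W^{1,2}_\loc(\Omega)$ and the quasiregular bound \eqref{xe1.9}; in particular the distributional $\det DV_\bz(Du_p)$ of Definition \ref{def1.2} coincides with the pointwise Jacobian determinant, which is already known by the coincidence result established in Section \ref{sec3}. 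So the task is purely pointwise-a.e.\ for the lower bound, and an integration-by-parts estimate for the upper bound.

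For the lower bound, I would work at a.e.\ point where $Du_p\ne 0$ (the set $\{Du_p=0\}$ contributes nothing to a nonnegative estimate since $D|Du_p|^{\bz+1}=0$ a.e.\ there by the chain rule for Sobolev functions). Writing $V=V_\bz(Du_p)$ and using the pointwise identity $\det DV=\tfrac14(|DV|^2-|D^*V|^2)$ where $D^*V$ denotes the ``anti-holomorphic'' part, one has $\det DV\ge \tfrac{1}{K+1/K}|DV|^2$ where $K=K(p,\bz)$ from \eqref{xe1.9} only when $\det DV\ge 0$; but a cleaner route is the well-known decomposition for $V_\bz$ of a $p$-harmonic function into its Cauchy--Riemann-type parts. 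Concretely, in the complex notation $f=V_\bz(Du_p)$, the $p$-harmonic equation forces $\bar\partial f$ to be a specific multiple of $\partial f$ (this is exactly the quasiregularity), and a direct computation gives
\begin{align*}
\det Df=|\partial f|^2-|\bar\partial f|^2=\Big(1-\Big(\tfrac{|p-2-\bz|}{\,p+\bz\,}\Big)^2\Big)|\partial f|^2,
\end{align*}
together with $|D|Du_p|^{\bz+1}|^2\le c\,|\partial f|^2$ with an explicit $c=c(p,\bz)$. Combining these two and simplifying the rational function of $p$ and $\bz$ should produce exactly the factor $\tfrac{\min\{1,p-1\}}{\bz+1}$; this algebraic simplification is the first place I expect to have to be careful. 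An alternative, and probably the one the authors use, is to plug the $p$-Laplace equation $\Delta_p u_p=0$, i.e.\ $\Delta u_p+(p-2)\tfrac{\Delta_\fz u_p}{|Du_p|^2}=0$, into the structural identity of Lemma \ref{lem3.7} (the analogue, for the $\ez$-regularized operator, of Lemma \ref{lem2.2}), and read off the sign and the constant; this avoids complex notation entirely and matches the machinery already set up.

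For the upper bound, I would start from the distributional formula \eqref{xe1.5} applied to $v=u_p$ with a cutoff $\psi\in C_c^\fz(B)$, $0\le\psi\le1$, $\psi\equiv1$ on $\tfrac12 B$, $|D\psi|\lesssim 1/r$, $|D^2\psi|\lesssim 1/r^2$. The first two terms are controlled by $\tfrac{C}{r^2}(1+\tfrac1{\bz+1})\int_B|Du_p|^{2+2\bz}$ exactly as for the $\fz$-harmonic case, giving, after $\int_B r^{-2}=C\fint_B$, the desired right-hand side with the $1+\tfrac1{1+\bz}$ part. The genuinely new term is $-\tfrac{\bz}{\bz+1}\int_B[D|Du_p|^{\bz+1}\cdot Du_p](Du_p\cdot D\psi)|Du_p|^{\bz-1}$; here one uses Cauchy--Schwarz to bound it by $\tfrac{|\bz|}{\bz+1}\big(\int_B|D|Du_p|^{\bz+1}|^2|D\psi|\,|Du_p|^{\bz-1}\cdot|Du_p|\big)^{1/2}\cdots$ —\,but the clean way is to invoke the lower bound just proved, which gives $\int_B|D|Du_p|^{\bz+1}|^2\psi\le \tfrac{\bz+1}{\min\{1,p-1\}}\int_B\det DV_\bz(Du_p)\psi$, then feed that back with an absorption (Young's) inequality, producing the factor $\tfrac1{p-1}\tfrac{\bz^2}{\bz+1}$ in front of $\fint_B|Du_p|^{2+2\bz}$. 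The main obstacle, and the step I would spend the most care on, is making this absorption argument rigorous while keeping the constant uniform in $p\ge2$ and tracking the exact power $\tfrac{\bz^2}{\bz+1}$: one must estimate $[D|Du_p|^{\bz+1}\cdot Du_p]$ by $|D|Du_p|^{\bz+1}|\,|Du_p|$ pointwise, split the resulting integral via Young with a small parameter tuned to the $\tfrac1{p-1}$ coefficient, absorb the $\int_B|D|Du_p|^{\bz+1}|^2\psi$ part using the lower bound and \eqref{xe1.8}-type reasoning (or directly \eqref{xe1.7}'s analogue), and collect the leftover into the stated constant. Standard truncation/approximation (working first with $(|Du_p|^2+\ez)^{\bz/2}Du_p$ via Lemmas \ref{lem3.7} and \ref{lem3.2} and letting $\ez\to0$, which is legitimate because $u_p\in C^{1,\alpha}_\loc$ and $V_\bz(Du_p)\in W^{1,2}_\loc$) handles the regularity bookkeeping.
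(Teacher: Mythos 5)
Your proposal is correct and takes essentially the same route as the paper: the lower bound is indeed obtained by substituting $\Delta u_p=-(p-2)\Delta_\fz u_p/|Du_p|^2$ into the structural identity of Lemma \ref{lem3.7} (combined with Lemma \ref{lem3.6}), and the upper bound is obtained from the distributional formula \eqref{xe1.5} with a cutoff, absorbing the extra $D|Du_p|^{\bz+1}\cdot Du_p$ term via Young's inequality and the lower bound just proved (the paper does this with a three-way case split $1<p\le 2$, $2<p\le 4$, $p\ge 4$ to keep the constant clean). Two small differences: the paper disposes of the critical set $E_{u_p}=\{Du_p=0\}$ by citing its discreteness for non-constant planar $p$-harmonic functions rather than the Sobolev chain rule, and the complex-analytic alternative you sketch first (via $|\bar\partial f|/|\partial f|$) is not pursued — your instinct that the Lemma \ref{lem3.7} route is the one used, avoiding the delicate algebraic simplification, is exactly right.
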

Theorem  \ref{thm1.4} will be proved in Section \ref{sec6} based on   the approach in \cite{kzz},
Lemma \ref{lem3.7}, Lemma \ref{lem2.1}, Lemma \ref{lem3.6}, and Lemma \ref{lem3.2}.
Note that the case $\bz=0$ was already given in a similar but simpler way by Lindgren-Lindqvist \cite{ll} via the approach in \cite{kzz}.

If  $p$-harmonic functions $u_p$ in a bounded smooth domain $\Omega$ have the same boundary data $g\in C^{0,1}(\partial\Omega)$, then by using the variational approach and Jensen \cite{j93}, there is  a function $u_\fz\in C^{0,1}(\overline \Omega)$, which is the unique $\fz$-harmonic function with boundary data $g$, so that $u_p\to u_\fz$ in $C^{0,\alpha}(\overline\Omega)$ for any $\alpha\in(0,1)$ and  weakly in $W^{1,q}_\loc(\Omega)$ for any $1<q<\fz$.

Recently, based on the approach in \cite{kzz} and Theorem {\ref{thm1.4}} with $\bz=0$, Lindgren-Lindqvist \cite{ll}  deduced that     $ u_p\to  u_\fz $ in $W^{1,q}_\loc(\Omega)$ for any $1<q<\fz$.
However, even  though  we already know that
$u_\fz\in C^1(\Omega)$ and $ C^{1,\alpha}(\Omega)$ for some $0<\alpha< 1/3$ (see   \cite{s,es08}),
 it remains open whether   $ u_p\to  u_\fz $ in   either  $C^1(\Omega)$ or $C^{1,\alpha}(\Omega)$.
We also observe that from $p$-harmonic to $\fz$-harmonic functions, the best possible regularity has a huge jump. For example  one always has  $u_p\in W^{3,1}_\loc$, but does
 not necessarily  has $u_\fz\in W^{2,3/2}_\loc$.

Because $Du_p\to Du_\fz$ in $L^q_\loc(\Omega)$ with $1<q<\fz$, for $\bz>-1$, the limit of   mappings $V_\bz(Du_\fz)$  as $p\to\fz$ is naturally expected to be  the mapping $V_\bz(Du_\fz)$ in certain sense.
However, since  $K(p,\bz)\to \fz$ as $p\to\fz$, one cannot expect that $V_\bz(Du_\fz)$ is a quasiregular. Indeed,  we do not necessarily have $ V_\bz(Du_\fz)\in W^{1,2}_\loc(\Omega)$   as witted by Aronsson's $\fz$-harmonic function $x_1^{4/3}-x_2^{4/3}$.
Moreover, the   $ W^{1,1}_\loc$-regularity
 of $V_\beta(Du_\fz)$
 is quite open and very difficult even in the case special when $\bz=0$,  and
 a pointwise Jacobian determinant $\det D V_\beta(Du_\fz) $ is unavailable.

Instead of the pointwise one, in the case when $\bz=0$,
 we already have the distributional  Jacobian determinant $\det DV _0(Du_\fz)=-\det D^2u_\fz$ as in \eqref{xe1.3}.
Because $Du_p\to Du_\fz$ in $L^2_\loc(\Omega)$,  one has
$\det DV _0(Du_p)\to \det DV _0(Du_\fz)$ in the sense of distributions.
Since $\det DV _0(Du_\fz)$ was proved to  be a Radon measure in \cite{kzz}, it is naturally
 expected that $\det DV _0(Du_p)\to \det DV _0(Du_\fz)$  in the  weak-$\star$ sense  in $\cm(\Omega)$.

In the case when $0\neq \beta>-1$,  it is a basic question
whether the limit $\lim_{p\to\fz}\det D V_\beta(Du_p) $  exists in certain sense.
If so, it is   expected  to be given by  $\det D V_\beta(Du_\fz) $.     However, unlike the case $\bz=0$, a distributional definition for $\det D V_\beta(Du_\fz) $ is unavailable
in the literature.
This leads us to introduce the distributional  $\det D V_\beta(Dv) $ as in Definition \ref{def1.2} and   build up Theorem \ref{thm1.5} below, which answer these questions.

\begin{thm}\label{thm1.5}
Given any  $\beta>-1$,
as $p\to\fz$ one has that   $V_\beta (Du_p)\to V_\beta (Du_\fz)$ in $ L^q_\loc(\Omega)$ for any $q>1$,
and also  that
$\det DV_\beta (Du_p)\to \det DV_\beta(Du_\fz)$ in the weak-$\star$ sense in $\cm (\Omega)$, that is,
  $$\int_\Omega \det DV_\beta(Du_\fz)\psi\,dx=\lim_{p\to\fz}\int_\Omega \det DV_\beta (Du_p)\psi\,dx \quad\forall\, \psi\in C_c^0(\Omega).$$
\end{thm}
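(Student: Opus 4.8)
The plan is to reduce the weak-$\star$ convergence of the Jacobian determinants to (i) strong $L^q_{\rm loc}$-convergence of $V_\beta(Du_p)$ and of the auxiliary quantities entering the distributional formula \eqref{xe1.5}, together with (ii) a uniform-in-$p$ mass bound supplied by Theorem \ref{thm1.4}. First I would recall that, by Jensen's variational comparison and the Lipschitz bounds, $u_p\to u_\fz$ in $C^{0,\az}_{\rm loc}$ and $Du_p\to Du_\fz$ in $L^q_{\rm loc}(\Omega)$ for every $q\in(1,\fz)$; combined with the uniform local Lipschitz bound on $u_p$ this gives $V_\beta(Du_p)=|Du_p|^\beta(\pa_{x_1}u_p,-\pa_{x_2}u_p)\to V_\beta(Du_\fz)$ pointwise a.e.\ and, by dominated convergence against the uniform $L^\fz_{\rm loc}$ bound, in $L^q_{\rm loc}(\Omega)$ for every $q>1$. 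This proves the first assertion. For the determinant, fix $\psi\in C_c^0(\Omega)$; by density it suffices to treat $\psi\in C_c^\fz(\Omega)$, since Theorem \ref{thm1.4} and Theorem \ref{thm1.3} give, for a fixed ball $B\Supset\mathrm{supp}\,\psi$, a bound $\big|\int \det DV_\beta(Du_p)\,\phi\,dx\big|\le C(\beta)\|\phi\|_{L^\fz}\fint_B|Du_p|^{2+2\beta}\le C(\beta,B)\|\phi\|_{L^\fz}$ uniform in $p\ge 2$ (using the uniform Lipschitz bound on $u_p$), so the functionals $\det DV_\beta(Du_p)$ are equibounded in $\cm(\Omega)$ locally and the general $\psi$ follows from the smooth case by a standard $3\epsilon$ argument once it is known for smooth test functions.

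For $\psi\in C_c^\fz(\Omega)$ I would test the distributional formula \eqref{xe1.5}. The first two terms,
$$
-\frac12\int_\Omega|Du_p|^{2\beta}(D^2\psi\,Du_p\cdot Du_p)\,dx+\frac{1}{2\beta+2}\int_\Omega|Du_p|^{2\beta+2}\bdz\psi\,dx,
$$
converge to the corresponding expressions with $u_\fz$ directly: $|Du_p|^{\beta}Du_p\to|Du_\fz|^\beta Du_\fz$ and $|Du_p|^{2\beta+2}\to|Du_\fz|^{2\beta+2}$ in $L^1_{\rm loc}$ by the pointwise convergence of $Du_p$ plus the uniform $L^\fz_{\rm loc}$ bound and dominated convergence. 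The genuinely delicate term is the third one,
$$
-\frac{\beta}{\beta+1}\int_\Omega\big[D|Du_p|^{\beta+1}\cdot Du_p\big]\,(Du_p\cdot D\psi)\,|Du_p|^{\beta-1}\,dx,
$$
which involves the weak gradient $D|Du_p|^{\beta+1}$. The point of working with $p$-harmonic functions is that $V_\beta(Du_p)\in W^{1,2}_{\rm loc}$ by \eqref{xe1.9}, hence $|Du_p|^{\beta+1}\in W^{1,2}_{\rm loc}$ with, by the estimates behind Theorem \ref{thm1.4}, a bound on $\|D|Du_p|^{\beta+1}\|_{L^2(B)}$ that is \emph{uniform in $p\ge2$} (it is controlled by $\fint_{2B}|Du_p|^{2+2\beta}$ up to the constant $C(1+\frac1{\beta+1}+\frac{\beta^2}{(\beta+1)(p-1)})$, which stays bounded as $p\to\fz$). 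Thus $\{D|Du_p|^{\beta+1}\}_p$ is bounded in $L^2_{\rm loc}$; extracting a weakly convergent subsequence, the weak limit must be $D|Du_\fz|^{\beta+1}$ because $|Du_p|^{\beta+1}\to|Du_\fz|^{\beta+1}$ in $L^1_{\rm loc}$ (so the limit of the gradients is the gradient of the limit in the distributional sense), and since $|Du_\fz|^{\beta+1}\in W^{1,2}_{\rm loc}$ is proved in \cite{kzz} the limit is the same along every subsequence, hence $D|Du_p|^{\beta+1}\rightharpoonup D|Du_\fz|^{\beta+1}$ in $L^2_{\rm loc}$ along the full sequence. Pairing this weak convergence against the factor $(Du_p\cdot D\psi)|Du_p|^{\beta-1}Du_p$, which converges \emph{strongly} in $L^2_{\rm loc}$ to $(Du_\fz\cdot D\psi)|Du_\fz|^{\beta-1}Du_\fz$ (again by pointwise convergence of $Du_p$ and the uniform $L^\fz_{\rm loc}$ bound, via dominated convergence, and noting $\beta-1>-2$ so the power is locally integrable against the $L^\fz$ bound without singularity issues), the product of a weakly-convergent and a strongly-convergent sequence converges, and the third term passes to the limit. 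Combining the three limits with \eqref{xe1.5} for $u_\fz$ (legitimate since $|Du_\fz|^{\beta+1}\in W^{1,2}_{\rm loc}$) yields $\int\det DV_\beta(Du_p)\psi\to\int\det DV_\beta(Du_\fz)\psi$ for all $\psi\in C_c^\fz$, and then for all $\psi\in C_c^0$ by the equiboundedness noted above.

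The main obstacle is the third term: one needs the weak $L^2_{\rm loc}$-compactness of $D|Du_p|^{\beta+1}$ with a bound uniform as $p\to\fz$, which is exactly what the quantitative upper bound in Theorem \ref{thm1.4} (with its $p$-independent constant as $p\to\fz$) is designed to give, and one must be careful that the companion factor $|Du_p|^{\beta-1}Du_p(Du_p\cdot D\psi)$ really does converge strongly and not merely weakly, so that the product passes to the limit. The identification of the weak limit of $D|Du_p|^{\beta+1}$ with $D|Du_\fz|^{\beta+1}$ crucially uses the $\cite{kzz}$ regularity $|Du_\fz|^{\beta+1}\in W^{1,2}_{\rm loc}$ to pin down the limit uniquely; without it one would only get convergence along subsequences.
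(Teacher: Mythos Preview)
Your plan is correct and matches the paper's strategy: pass to the limit in the distributional formula \eqref{xe1.5} using strong $L^q_\loc$-convergence of $Du_p$ together with the uniform-in-$p$ bound on $D|Du_p|^{\beta+1}$ in $L^2_\loc$ supplied by Theorem~\ref{thm1.4}, then upgrade from $C_c^\fz$ to $C_c^0$ test functions via the uniform mass bound. The paper packages your term-by-term limit as Lemma~\ref{lem4.1} and simply invokes it.

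One point deserves care. You take the strong convergence $Du_p\to Du_\fz$ in $L^q_\loc$ as a consequence of ``Jensen's variational comparison and the Lipschitz bounds,'' but those arguments yield only \emph{weak} $L^q$-convergence. Strong convergence is precisely the content of \cite{ll}, and the paper in fact re-derives it inside the proof: the case $\beta=0$ of Lemmas~\ref{lem6.1}--\ref{lemma6.2} gives $|Du_p|\in W^{1,2}_\loc$ uniformly in $p>2$, whence by compact embedding $|Du_p|\to h$ strongly in $L^q_\loc$; a flatness argument (as in \cite{kzz,ll}) identifies $h=|Du_\fz|$, and then weak convergence of $Du_p$ plus strong convergence of the norms $|Du_p|$ upgrades to strong convergence of $Du_p$. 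So the strong convergence you quote is not an input independent of the second-order estimates---it is itself a consequence of them. Likewise, the ``uniform local Lipschitz bound on $u_p$'' you invoke for dominated convergence is true but not immediate; your argument does not actually need it, since the variational bound $\|Du_p\|_{L^q(\Omega)}\le |\Omega|^{1/q}\|g\|_{\lip(\partial\Omega)}$ for $p>q$ already controls $\fint_B|Du_p|^{2+2\beta}$ and, together with the a.e.\ convergence, suffices (via Vitali or Lemma~\ref{lem4.1}) for all the $L^q$-limits you need.
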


Theorem \ref{thm1.5} follows from Theorems \ref{thm1.3} and \ref{thm1.4}, and the convergence $Du_p\to Du_\fz$ in $L^q_\loc(\Omega)$ with $1<q<\fz$ as given in \cite{ll}. See Section \ref{sec6} for details.

 \section{Proof of Theorem  \ref{thm1.1}}
                \label{sec2}
 We  begin with the following divergence structure of $-\det D^2v$ for $v\in C^\fz$.
\begin{lem}\label{lem2.1}
 For any $v\in C^\fz(\Omega)$,  one has
 \begin{align}\label{xe2.1}-\det D^2v=\frac12\big[|D^2v|^2-(\Delta v)^2\big]=
\frac12 {\rm div}(D^2vDv-\Delta v Dv)=\frac12\big[\Delta(|Dv|^2)-(v_{x_i}v_{x_j})_{x_ix_j}\big].
\end{align}
Consequently,  \begin{align}\label{xe2.2}
\int_\Omega
-\det D^2v\psi\,dx&=-
\frac12 \int_\Omega [D^2vDv-\Delta v Dv]\cdot D\psi\,dx\\
&=
\frac12 \int_\Omega [|Dv|^2\Delta\psi-D^2\psi Dv \cdot D v]\,dx\quad\forall \psi\in C_c^\fz(\Omega).\nonumber\end{align}
\end{lem}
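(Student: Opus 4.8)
The plan is to verify that all four expressions in \eqref{xe2.1} agree as pointwise functions by an elementary computation in two variables, and then to derive \eqref{xe2.2} by one (or two) integrations by parts. Throughout I use the summation convention and the fact that, since $v\in C^\fz(\Omega)$, all mixed third-order partial derivatives of $v$ commute.

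First I would establish the algebraic identity $-\det D^2v=\frac12[|D^2v|^2-(\Delta v)^2]$. In $\rr^2$ one has $\det D^2v=v_{x_1x_1}v_{x_2x_2}-v_{x_1x_2}^2$, while $|D^2v|^2=v_{x_1x_1}^2+2v_{x_1x_2}^2+v_{x_2x_2}^2$ and $(\Delta v)^2=(v_{x_1x_1}+v_{x_2x_2})^2$; subtracting gives $|D^2v|^2-(\Delta v)^2=-2(v_{x_1x_1}v_{x_2x_2}-v_{x_1x_2}^2)=-2\det D^2v$. This is the only step that uses $n=2$.

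Next, for the divergence form I would compute ${\rm div}(D^2vDv)=\pa_{x_i}(v_{x_ix_j}v_{x_j})=(\Delta v)_{x_j}v_{x_j}+|D^2v|^2$ and ${\rm div}(\Delta v\,Dv)=\pa_{x_i}(\Delta v\,v_{x_i})=(\Delta v)_{x_i}v_{x_i}+(\Delta v)^2$, so that subtracting cancels the common term $D\Delta v\cdot Dv$ and leaves ${\rm div}(D^2vDv-\Delta v\,Dv)=|D^2v|^2-(\Delta v)^2$. For the last expression I would similarly expand $\Delta(|Dv|^2)=\pa_{x_kx_k}(v_{x_j}v_{x_j})=2|D^2v|^2+2Dv\cdot D\Delta v$ and $(v_{x_i}v_{x_j})_{x_ix_j}=|D^2v|^2+(\Delta v)^2+2Dv\cdot D\Delta v$; subtracting again removes the third-order terms and produces $|D^2v|^2-(\Delta v)^2$. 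Combining these three computations with the algebraic identity gives \eqref{xe2.1}.

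Finally, \eqref{xe2.2} follows by testing against $\psi\in C_c^\fz(\Omega)$. Since $D^2vDv-\Delta v\,Dv$ is a smooth vector field, integrating the divergence identity $-\det D^2v=\frac12{\rm div}(D^2vDv-\Delta v\,Dv)$ over $\Omega$ against $\psi$ and using the divergence theorem (with no boundary contribution since $\psi$ has compact support) yields the first line of \eqref{xe2.2}. For the second line one can either integrate by parts once more, or test the identity $-\det D^2v=\frac12[\Delta(|Dv|^2)-(v_{x_i}v_{x_j})_{x_ix_j}]$ against $\psi$ and move both derivatives onto $\psi$, using $\int_\Omega v_{x_i}v_{x_j}\psi_{x_ix_j}\,dx=\int_\Omega D^2\psi\,Dv\cdot Dv\,dx$. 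I do not expect a real obstacle here: the statement is purely algebraic plus integration by parts, and the only points requiring care are that the first identity is special to dimension $2$ and that commuting the mixed partials is legitimate precisely because $v\in C^\fz$.
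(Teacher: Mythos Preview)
Your proof is correct; the paper actually states this lemma without proof, treating it as an elementary identity, so there is nothing to compare against. Your verification of each of the four expressions in \eqref{xe2.1} and the subsequent integrations by parts for \eqref{xe2.2} are exactly the kind of direct computation the paper is implicitly relying on.
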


From  this we deduce the following formula for $(-\det D^2u) u^2$ for $\infty$-harmonic functions $u$
in $\Omega\subset\rr^2$.
Since $u$   always enjoys  locally Lipschitz regularity (see Jessen \cite{j93}),
by Rademacher's theorem, $u$ is  differentiable   almost everywhere, and hence for almost all $x\in\Omega$, the derivative $Du(x)$ exists and $\lip u(x)=|Du(x)|$.  Moreover,
$$\|Du\|_{L^\fz(U)}=\sup_{x\in U}\lip u(x)\quad\mbox{ for any domain}\ U\subset\Omega.$$
{As such}, when there is no confusion, we {slightly abuse the notation by writing}  $\lip u $ {instead of} $|Du|$.
We remark that even {though} $u$ {is} known to be everywhere differentiable (see Evans-Smart \cite{es11a,es11b}) and also $C^1$-regular (see  Savin \cite{s}),
all the results in \cite{kzz} and also all our results and proofs below do not rely on {either the} everywhere differentiability  {or the   $C^1$-regularity of $u$.

\begin{lem}\label{lem2.2} If $u$ is $\fz$-harmonic  in $\Omega$, then for any $\psi\in C_c^\fz(\Omega)$ one has
\begin{align} \label{xe2.3}
& \int_U(-\det D^2u )u^2\psi\,dx  + \int_U |Du|^4\psi \,dx\\
&\quad =-\int_U |Du|^2 (Du\cdot D\psi) u \,dx +\frac12\int_U u^2[ |Du|^2\Delta\psi -D^2\psi Du\cdot Du] \,dx. \nonumber
\end{align}
\end{lem}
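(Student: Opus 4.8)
\emph{Strategy.} The identity \eqref{xe2.3} is exactly what one gets by (formally) inserting the test function $u^2\psi$ into the distributional formula \eqref{xe2.2} for $-\det D^2u$ and using that $\Delta_\fz u=D^2u\,Du\cdot Du=0$ a.e.\ (which holds since $D|Du|\cdot Du=0$ a.e., by \cite{kzz}). Since $u$ is merely Lipschitz and $u^2\psi$ is not twice differentiable, I would make this rigorous by replacing $u$ with planar $p$-harmonic functions and letting $p\to\fz$; the point is that the natural substitute for the vanishing of $\Delta_\fz u$ is a factor $\tfrac1{p-4}$ produced by the $p$-Laplace equation. By linearity of \eqref{xe2.3} in $\psi$ and a partition of unity it suffices to treat $\psi$ supported in a ball $B$ with $\overline{2B}\subset\Omega$. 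Let $u_p$ be the $p$-harmonic function in $2B$ with $u_p=u$ on $\partial(2B)$. Then, by Jensen \cite{j93} and Lindgren--Lindqvist \cite{ll}, as $p\to\fz$ one has $u_p\to u$ uniformly on $\overline{2B}$ and $Du_p\to Du$ in $L^q(B)$ for every $q<\fz$; and (the case $\bz=0$ of the bounds in \cite{kzz,ll}) $-\det D^2u_p\ge0$ with $\int_B(-\det D^2u_p)\,dx\le C\int_{2B}|Du_p|^2\,dx$, which is bounded uniformly in $p$.

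\emph{The identity for $u_p$ and the $\Delta_\fz$-term.} For $w\in W^{2,2}_\loc\cap W^{1,\fz}_\loc$ (in particular $w=u_p$, by the regularity of planar $p$-harmonic functions; one may first prove the identity for the mollifications $w*\rho_\dz$, which are smooth, and then let $\dz\to0$), insert $\phi=w^2\psi\in W^{2,2}_c$ into \eqref{xe2.2}, expand $\Delta(w^2\psi)$ and $D^2(w^2\psi)$, and rewrite $\int w\psi\,|Dw|^2\Delta w$ through ${\rm div}(|Dw|^2Dw)=|Dw|^2\Delta w+2\Delta_\fz w$ followed by one integration by parts. A direct computation yields
\begin{align*}
&\int(-\det D^2 w)\,w^2\psi\,dx+\int|Dw|^4\psi\,dx+\int w|Dw|^2(Dw\cdot D\psi)\,dx+3\int w\psi\,\Delta_\fz w\,dx\\
&\qquad=\frac12\int w^2\big[|Dw|^2\Delta\psi-D^2\psi\,Dw\cdot Dw\big]\,dx.
\end{align*}
The only term here that is not obviously convergent as $p\to\fz$ is $3\int u_p\psi\,\Delta_\fz u_p\,dx$, and this is the heart of the matter. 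Since $u_p$ is $p$-harmonic, a.e.\ $|Du_p|^2\Delta u_p+(p-2)\Delta_\fz u_p=0$, so $\int u_p\psi\,\Delta_\fz u_p=-\tfrac1{p-2}\int u_p\psi\,|Du_p|^2\Delta u_p$; integrating by parts in the latter integral (again via ${\rm div}(|Du_p|^2Du_p)=|Du_p|^2\Delta u_p+2\Delta_\fz u_p$) and solving the resulting linear relation gives, for $p>4$,
$$\int u_p\psi\,\Delta_\fz u_p\,dx=\frac1{p-4}\Big[\int|Du_p|^4\psi\,dx+\int u_p|Du_p|^2(Du_p\cdot D\psi)\,dx\Big],$$
whose bracket stays bounded as $p\to\fz$ (by the $L^q(B)$ convergence of $Du_p$ and $u_p\to u$ uniformly), so $\int u_p\psi\,\Delta_\fz u_p\,dx\to0$.

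\emph{Passing to the limit.} In the displayed identity with $w=u_p$, the terms $\int|Du_p|^4\psi$, $\int u_p|Du_p|^2(Du_p\cdot D\psi)$ and $\tfrac12\int u_p^2[|Du_p|^2\Delta\psi-D^2\psi\,Du_p\cdot Du_p]$ converge to the corresponding expressions for $u$ by the strong $L^q(B)$ convergence of $Du_p$ together with the uniform convergence of $u_p$, and the $\Delta_\fz$-term tends to $0$ by the previous step. For $\int(-\det D^2u_p)\,u_p^2\psi$: from \eqref{xe2.2} and $Du_p\to Du$ in $L^2(B)$ we get $-\det D^2u_p\to-\det D^2u$ in $\mathcal D'(B)$, and combined with the uniform estimate $\int_B(-\det D^2u_p)\,dx\le C$ (nonnegativity plus the upper bound above) this upgrades to weak-$\star$ convergence in $\cm(B)$; since $u_p^2\psi\to u^2\psi$ uniformly with supports in a fixed compact set, $\int(-\det D^2u_p)\,u_p^2\psi\,dx\to\int(-\det D^2u)\,u^2\psi\,dx$. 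Letting $p\to\fz$ in the identity produces exactly \eqref{xe2.3}. The main obstacle is the control of the $\Delta_\fz u_p$-term: a plain mollification $u^\ez$ of $u$ would leave a term $\int u^\ez\psi\,\Delta_\fz u^\ez$ with no determined limit, and it is precisely the $p$-Laplace equation (through the coefficient $p-2$) that forces this term to be $O(1/p)$; a secondary subtlety is that $-\det D^2u$ cannot be tested directly against the non-smooth $u^2\psi$, which is why one passes through the weak-$\star$ convergence of $-\det D^2u_p$ and hence through the already-available ($\bz=0$) uniform bounds of \cite{kzz,ll}.
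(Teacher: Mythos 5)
Your proof is correct, and it takes a genuinely different route from the paper's. You both derive the same intermediate identity (your displayed relation is the same as the paper's formula (\ref{ey.3}) after noting $D|Dv|^2\cdot Dv = 2\Delta_\fz v$), but you handle the passage to the limit quite differently. The paper approximates $u$ by the exponentially harmonic $u^\ez$, solving $\Delta_\fz u^\ez + \ez\Delta u^\ez = 0$, and disposes of the $\Delta_\fz$-term by appealing to the a.e.\ identity $D|Du|^2\cdot Du = 0$ (from \cite{kzz}) together with the weak $L^2_\loc$ convergence $D|Du^\ez|^2 \to D|Du|^2$; that is, the $\Delta_\fz$-term survives in the limit but evaluates to zero. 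You instead approximate by the $p$-harmonic $u_p$ and use the $p$-Laplace equation $|Du_p|^2\Delta u_p + (p-2)\Delta_\fz u_p = 0$ plus one more integration by parts via $\operatorname{div}(|Du_p|^2 Du_p)$ to extract an explicit prefactor $\frac{1}{p-4}$, forcing the $\Delta_\fz$-term to decay to zero before the limit is even taken. Your route is arguably more transparent on this point (the term is $O(1/p)$ with a closed-form bracket), at the cost of needing the nonnegativity and the uniform local $L^1$-bound on $-\det D^2u_p$ from \cite{ll} to control the $\int(-\det D^2 u_p)\, u_p^2\psi$ term, whereas the paper uses the parallel uniform bounds for $u^\ez$ from \cite{kzz}. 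Neither route is circular relative to the paper's logic, since both sets of approximation estimates were established in the cited external works. One minor point you could make explicit: to pass from the distributional convergence $-\det D^2u_p \to -\det D^2u$ to weak-$\star$ convergence in $\mathcal M(B)$ you are invoking nonnegativity plus uniform total mass, and then to handle the $p$-dependent test function $u_p^2\psi$ you split it as $u^2\psi + (u_p^2-u^2)\psi$ and use uniform convergence of $u_p$ together with the uniform mass bound on the second piece; you do sketch this, and it is sound.
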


\begin{proof} {\bf Step 1.}
Given  any $v\in C^2(U)$ with $U\Subset\Omega$,
we show that
\begin{align}\label{ey.3}
 \int_U(-\det D^2v)v^2\psi\,dx  &= - \frac32 \int_U   (D|Dv|^2\cdot Dv) v  \psi\,dx-\int_U |Dv|^4\psi \,dx\\
&\quad-\int_U |Dv|^2 (Dv\cdot D\psi) v \,dx+\frac12\int_Uv^2\Big[ |Dv|^2\Delta\psi -D^2\psi Dv\cdot Dv\Big] \,dx.\nonumber
\end{align}

Indeed, by \eqref{xe2.1}  one has
\begin{align*}
 \int_U(-\det D^2v) v^2\psi\,dx&=-  \int_U   D^2v Dv \cdot  \big[vDv   \psi +\frac12 v^2 D\psi\big]\,dx+   \int_U  \Delta v Dv\cdot  \big[vDv  \psi+\frac12 v^2  D\psi\big]\,dx.
\end{align*}
Note that
\begin{align*}
 &-  \int_U   D^2v Dv \cdot  \big[vDv  \psi+\frac12 v^2 D\psi\big]\,dx =- \frac12 \int_U   (D|Dv|^2\cdot Dv) v  \psi\,dx-\frac14 \int_U  (D|Dv|^2\cdot D\psi)v^2  \,dx,
\end{align*}
where,  using integration by parts,
\begin{align*}
 -\frac14 \int_U  (D|Dv|^2\cdot D\psi)v^2  \,dx
 &=   \int_U \Big[\frac14|Dv|^2\Delta\psi v^2 +  \frac12 |Dv|^2 (Dv\cdot D\psi) v\Big]\,dx.
\end{align*}
Moreover,  integration by parts yields
\begin{align*}
& \int_U  \Delta v Dv\cdot  \big[vDv  \psi +\frac12 v^2  D\psi\big]\,dx\\
 &=  -  \int_U   D v \cdot D(  |Dv |^2 v \psi+\frac12 v^2 Dv\cdot D\psi )\,dx\\
 &=  -
 \int_U     (D|Dv|^2\cdot Dv)   v\psi\,dx  -\int_U |Dv|^4\psi \,dx-\frac14 \int_U v^2   D|Dv|^2\cdot D\psi\,dx \\
 &\quad-\int_U[2v |Dv|^2Dv\cdot D\psi+
 \frac12 v^2 D^2\psi Dv\cdot Dv \,dx\\
 &=  -\int_U     (D|Dv|^2\cdot Dv)   v\psi\,dx  -\int_U |Dv|^4\psi \,dx+\frac14 \int_U v^2   |Dv|^2 \Delta\psi\,dx  \\
 &\ \quad -\int_U\big[\frac 3 2 v |Dv|^2Dv\cdot D\psi+
 \frac12 v^2 D^2\psi Dv\cdot Dv\big] \,dx.
\end{align*}
Combining these, we conclude \eqref{ey.3}.

{\bf Step 2.}  Given any smooth subdomain $U\Subset\Omega$, let $u^\ez\in C^\fz(U)\cap C^0(\overline U)$ be the solution to
$$\mbox{ ${\rm div}(e^{\frac1{2\ez} |Du^\ez|^2}Du^\ez)= \frac 1 \ez e^{\frac1{2\ez} |Du^\ez|^2}(\Delta_\fz u^\ez+\ez\Delta u^\ez)=0$ with $u^\ez=u$ on $\partial U$. }  $$

Given any $\psi\in C_c^\fz(U)$, we observe that
$$
 \int_U(-\det D^2u) u^2\psi\,dx  = \lim_{\ez\to0}\int_U(-\det D^2u^\ez) (u^\ez)^2 \psi\,dx.
$$
Indeed,
\begin{align*}
&\big|\int_U(-\det D^2u ) u^2\psi\,dx-\int_U(-\det D^2u^\ez) (u^\ez)^22\psi\,dx\big|\\
&\quad\le   \big|\int_U(-\det D^2u) u^2\psi\,dx-\int_U(-\det D^2u^\ez) u^2 \psi\,dx \big|
+ \big|\int_U(-\det D^2u^\ez) [u^2-(u^\ez)^2]\psi\,dx  \big|.
\end{align*}
By the definition of $-\det D^2u$ (see \cite[Theorem 1.5]{kzz} and also the proof of Theorem \ref{thm1.3} below), the first term goes to $0$ as $\ez\to0$.  The second term is bounded by
 $$
 \|\det D^2u^\ez\|_{L^1({\rm supp}\psi)}\|(u^\ez)^2-u^2\|_{L^\fz({\rm supp}\psi)}.
 $$
By using the fact
$-\det D^2u^\ez\in L^1_\loc(U)$ uniformly in small $\ez>0$  (see \cite[Theorem 1.5]{kzz} and also the proof of Theorem \ref{thm1.3} below), the second term also goes to  $0$ as $\ez\to0$.

 Applying \eqref{ey.3} proved in Step 1 to $u^\ez$, one has
 \begin{align*}
&\int_U(-\det D^2u^\ez) (u^\ez)^2\psi\,dx \\
&= - \frac32 \int_U   (D|Du^\ez|^2\cdot Du^\ez) u^\ez \psi\,dx-\int_U |Du^\ez|^4\psi \,dx\\
&\quad-\int_U |Du^\ez|^2 (Du^\ez\cdot D\psi) u^\ez \,dx+\frac12\int_U (u^\ez)^2\big[ |Du^\ez|^2\Delta\psi -D^2\psi Du^\ez\cdot Du^\ez\big] \,dx.
\end{align*}
Since $D|Du^\ez|^2 \to D|Du |^2$ weakly in $L^2_\loc(U)$  and $u^\ez\to u$ in $W^{1,q}_\loc(U)$ for any $1<q<\fz$ (see \cite{kzz}), sending $\ez\to0$  above and noting
$D|Du|^2\cdot Du=0$   one has desired identity \eqref{xe2.3}.
\end{proof}

Consequently, one has
\begin{lem} \label{lem2.3} If $u$ is $\fz$-harmonic  in $\Omega$,
then
\begin{align}\label{ey.1} \int_{B(x,r)}(-\det D^2u) u^2\,dx+  \|Du\|^4_{L^4(B(x,r))}\le  \frac C{r^4} \| u \|^4_{L^4(B(x,2r))}\quad \mbox{whenever $B(x,2r)\subset \Omega$. }
\end{align}
In particular, one has
\begin{align}\label{ey.2} \|Du\|_{L^4(B(x,r))}\le \frac C{r} \| u \|_{L^4(B(x,2r))}\quad \mbox{whenever $B(x,2r)\subset \Omega$. }\end{align}
 \end{lem}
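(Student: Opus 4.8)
The plan is to apply the identity \eqref{xe2.3} of Lemma \ref{lem2.2} to a well-chosen cutoff and then reabsorb the quartic term. First I would fix $B(x,2r)\subset\Omega$ and pick $\eta\in C_c^\fz(B(x,2r))$ with $0\le\eta\le1$, $\eta\equiv1$ on $B(x,r)$, $|D\eta|\ls1/r$ and $|D^2\eta|\ls1/r^2$. The crucial choice is to test \eqref{xe2.3} against $\psi=\eta^4$, rather than against $\eta$ or $\eta^2$: since $0\le\eta\le1$ this yields $|D\psi|\ls\eta^3/r$ and $|D^2\psi|\ls\eta^2/r^2$, so that every factor of $D\psi$ or $D^2\psi$ on the right-hand side of \eqref{xe2.3} carries enough powers of $\eta$ to be reabsorbed into $\int|Du|^4\eta^4\,dx$ through Young's inequality.

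With $\psi=\eta^4$ I would then estimate the two terms on the right of \eqref{xe2.3}. Using $|D\psi|\ls\eta^3/r$ and $a^3b\le\ez a^4+C_\ez b^4$ with $a=\eta|Du|$, $b=|u|/r$,
\[\lf|\int_\Omega|Du|^2(Du\cdot D\psi)u\,dx\r|\ls\frac1r\int_\Omega(\eta|Du|)^3|u|\,dx\le\ez\int_\Omega(\eta|Du|)^4\,dx+\frac{C_\ez}{r^4}\int_{B(x,2r)}|u|^4\,dx,\]
while using $|D^2\psi|\ls\eta^2/r^2$ and $a^2b^2\le\ez a^4+C_\ez b^4$,
\[\frac12\lf|\int_\Omega u^2\big[|Du|^2\bdz\psi-D^2\psi Du\cdot Du\big]\,dx\r|\ls\frac1{r^2}\int_\Omega(\eta|Du|)^2u^2\,dx\le\ez\int_\Omega(\eta|Du|)^4\,dx+\frac{C_\ez}{r^4}\int_{B(x,2r)}|u|^4\,dx.\]
Choosing $\ez$ small enough that the two $\ez\int_\Omega(\eta|Du|)^4\,dx$ contributions add up to at most $\frac12\int_\Omega|Du|^4\eta^4\,dx$, I can move this quantity to the left-hand side of \eqref{xe2.3} and obtain
\[\int_\Omega(-\det D^2u)u^2\eta^4\,dx+\frac12\int_\Omega|Du|^4\eta^4\,dx\le\frac C{r^4}\int_{B(x,2r)}|u|^4\,dx.\]

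To conclude, I would use that by \eqref{xe1.1} the distribution $-\det D^2u$ is a nonnegative Radon measure, hence so is $(-\det D^2u)u^2$; together with $0\le\eta^4\le1$ and $\eta\equiv1$ on $B(x,r)$ this gives $\int_{B(x,r)}(-\det D^2u)u^2\,dx\le\int_\Omega(-\det D^2u)u^2\eta^4\,dx$ and $\int_{B(x,r)}|Du|^4\,dx\le\int_\Omega|Du|^4\eta^4\,dx$, so the previous display yields \eqref{ey.1} after renaming the constant. Finally \eqref{ey.2} is immediate: discarding the nonnegative term $\int_{B(x,r)}(-\det D^2u)u^2\,dx$ in \eqref{ey.1} and taking fourth roots gives $\|Du\|_{L^4(B(x,r))}\ls\frac1r\|u\|_{L^4(B(x,2r))}$. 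There is no genuine analytic obstacle here, as the substance lies in Lemma \ref{lem2.2}; the only points needing care are the bookkeeping ones, namely choosing the fourth power $\psi=\eta^4$ so the cutoff derivatives are controlled by powers of $\eta$ matched to the quartic left-hand side, and reading the terms involving $-\det D^2u$ as pairings against a (nonnegative) Radon measure, since $-\det D^2u$ need not be absolutely continuous.
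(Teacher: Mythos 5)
Your proof is correct and takes essentially the same route as the paper: in both cases one tests the identity \eqref{xe2.3} from Lemma \ref{lem2.2} against $\psi=\phi^4$ for a standard cut-off $\phi$, absorbs the quartic gradient contributions via Young's inequality, and finally discards (or uses positivity of) the $-\det D^2u$ term, which is a nonnegative Radon measure by \eqref{xe1.1}. The only cosmetic difference is that you track the powers of $\eta$ carried by $D\psi$ and $D^2\psi$ explicitly before applying Young, while the paper expresses the same bound in terms of $|D\phi|^4+\phi^2|D^2\phi|^2$; the substance is identical.
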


 \begin{proof}
 Let $\phi\in C^\fz_c(B(x,2r))$ be a cut-off function {satisfying}
 \begin{align}\label{cut-off}
 \mbox{ $\phi=1$ in $B(x,r)$, $0\le \phi\le 1$ in $B(x,2r)$ and $|D\phi|^2+|D^2\phi|\le\frac C{r^2}$   in $B(x,2r)$.}
 \end{align}
Taking $\psi=\phi^4$ in \eqref{xe2.3},  one has
 \begin{align*}
&\int_{B(x,2r)}(-\det D^2u ) u^2\phi^4\,dx+ \int_{B(x,2r)} |Du|^4\phi ^4\,dx\\
&\quad= -\int_{B(x,2r)} |Du|^2 (Du\cdot D\phi^4) u \,dx +\frac12\int_{B(x,2r)} u^2\big[ |Du|^2\Delta\phi^4 -D^2\phi^4 Du\cdot Du\big] \,dx.
\end{align*}
By Young's inequality,  the right hand side of the above identity is bounded
 \begin{align*}
   \frac12 \int_{B(x,2r)} |Du|^4\phi ^4\,dx+
 C\int_{B(x,2r)} |u|^4\big[|D\phi|^4+\phi^2|D^2\phi|^2 \big]\,dx.
\end{align*}
Thus,
 \begin{align*}
 \int_{B(x,2r)}(-\det D^2u) u^2\phi^4\,dx+ \int_{B(x,2r)} |Du|^4\phi ^4\,dx\le
 C\int_{B(x,2r)} |u|^4\big[|D\phi|^4+\phi^2|D^2\phi|^2 \big]\,dx,
\end{align*}
which together with \eqref{cut-off}  gives the desired  \eqref{ey.1}.  Finally, since  $-\det  D^2u\ge0$  (see \cite{kzz}),  \eqref{ey.2} follows from \eqref{ey.1}.
 \end{proof}

From Lemma \ref{lem2.3} and some basic properties of $\fz$-harmonic functions we are able to get the following gradient estimate {in Theorem \ref{thm1.1}  $(i)$}.
\begin{proof}[Proof of Theorem \ref{thm1.1} $(i)$]
 Since $u-c$ is also $\fz$-harmonic for any constant $c$, by Lemma \ref{lem2.3},
$$\|Du\|_{L^4(B(x,r))}\le  \frac C r \| u-c\|_{L^4(B(x,2r))}\quad\forall c\in\rr $$
whenever $B(x,2r)\subset \Omega$.
Note that
$$ \| u-c\|_{L^4(B(x,2r))}\le  C \| u-c\|_{L^1(B(x,2r))}^{1/4}\|u-c\|^{3/4}_{L^\fz(B(x,2r))}.$$
Taking $c$ as the average  of $u$ in $B(x,2r)$, and applying the Sobolev-Poincar\'e inequality   one has
$$
\|u-c\|_{L^\fz(B(x,2r))}\le C r^{1/2}\|Du\|_{L^4(B(x,2r))}$$
and hence
$$
\|Du\|_{L^4(B(x,r))}\le  C r^{-5/8} \| u\|_{L^1(B(x,2r))}^{1/4}   \|Du\|_{L^4(B(x,2r))}^{3/4}.
$$
This and Young's inequality yield that, for any $\ez\in(0,1)$, one has
$$
r^{5/2} \|Du\|_{L^4(B(x,r))}\le\ez r^{5/2}\|Du\|_{L^4(B(x,2r))}+
  C_\varepsilon   \| u\|_{L^1(B(x,2r))} .
  $$
Via a standard iteration argument  (see, for instance, \cite[pp. 80--82]{Gia}), we conclude that
$$  r^{5/2} \|Du\|_{L^4(B(0,r))}\le
  C   \| u\|_{L^1(B(0,2r))} .$$
Applying  Morrey's inequality,
for any ball $ B(x,2r)\subset \Omega$ and  any$ y\in \Omega$ with $ |x-y|=  r $, by the above one has
$$|
u(x)-u(y)|\le |x-y|^{1/2}\|Du\|_{L^4(B(x,r))} \le   \frac{C}{r^{3}} |x-y|\| u\|_{L^1(B(x,2r))} .$$
One obtains
$$u(y) - \frac{C}{r^{3}}  |x-y|\| u\|_{L^1(B(x,2r))}\le u(x)
\le u(y) +  \frac{C}{r^{3}} |x-y|\| u\|_{L^1(B(x,2r))} $$
in $\partial (B(x,r)\setminus \{x\})$  and then, by the comparison property with cones,      in $B(x,r)$. Since $u$ is differentiable at $x$  (see \cite{es11b}), this yields that
$$
|Du(x)|\le\frac{C }{r} \fint_{B(x,2r)}| u|\,dz.
$$
Theorem \ref{thm1.1} $(i)$ is proved.
\end{proof}

\begin{rem} 
\rm
Let $u$ be an $\fz$-harmonic function in $\Omega\subset\rr^2$.

$(i)$   It was shown by Lindqvist and Manfredi \cite{lM} that
$$|Du(x)|\le \frac{2}r\|u\|_{L^\fz(B(x,r))}\quad \mbox{in $B(x,r) \subset\Omega$}.$$
{Theorem \ref{thm1.1} $(i)$ shows} that $2\|u\|_{L^\fz(B(x,r))}$  can be relaxed to the average  $C \fint_{B(x,r)}|u|\,dx$.

$(ii)$ Via  directly working on  $-\det D^2u^\ez$  for $e^{\frac1{2\ez}|\xi|^2}$-harmonic function $ u^\ez$ and some tedious calculation,
 it was  \cite[Theorem 1.5]{kzz} that,  for any $p>2$,
$$\|Du\|_{L^p(B(x,r))}\le \frac {C(p)} r \| u \|_{L^p(B(x,2r))}\quad  \mbox{ whenever $B(x,2r)\Subset \Omega$.} $$
In the case when $p= 4$,   we derived  a formula for $(-\det D^2u) u^2$ in Lemma \ref{lem2.2}, {simplifying} the argument and calculations in \cite{kzz}. See Lemma
 \ref{lem2.3} and its proof.
\end{rem}

The following is crucial to prove Theorem \ref{thm1.1} $(ii)$.  Recall that $\cp=\{b+a\cdot x|b\in\rr, a\in\rr^2\}$.

 \begin{prop}\label{lem2.8} If $u\in C^0(\Omega)$ is an $\fz$-harmonic function in a planar domain $\Omega$,
 then
$$\int_{\frac14 B}-\det D^2u\,dx\le C  \inf_{P\in\cp}\|\frac{u-P}r\|_{L^\fz( B)}\Big[|DP|+\|\frac{u-P}r\|_{L^\fz( B)} \Big]
\quad\forall\ B=B(x,r) \Subset\Omega.$$
\end{prop}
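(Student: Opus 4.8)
The plan is to prove the estimate with $\inf_{P\in\cp}$ replaced by an arbitrary fixed $P\in\cp$ and then take the infimum. Fix $B=B(x_0,r)\Subset\Omega$ and $P\in\cp$, and set $w=u-P$, $a=DP$, $M=\|w\|_{L^\fz(B)}$. Two structural facts are used at the outset. First, the distributional Jacobian is invariant under the addition of an affine function, $-\det D^2u=-\det D^2w$; this follows from Definition \ref{def1.2} with $\bz=0$ because the terms generated by the shift $Dw\mapsto Dw+a$ integrate to zero against every $\psi\in C_c^\fz(\Omega)$. Second, $-\det D^2u$ is a nonnegative Radon measure (Theorem \ref{thm1.3}). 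Using the distributional formula for $-\det D^2w$ (Definition \ref{def1.2} with $\bz=0$, cf.\ Lemma \ref{lem2.1}) against a cut-off $\phi\in C_c^\fz(\frac12 B)$ with $0\le\phi\le1$, $\phi\equiv1$ on $\frac14 B$, and $|D\phi|^2+|D^2\phi|\le C/r^2$, and using $-\det D^2u\ge0$, one gets
\begin{align*}
\int_{\frac14 B}(-\det D^2u)\,dx\le\int_\Omega\phi\,d(-\det D^2w)=\frac12\int_\Omega\big[|Dw|^2\Delta\phi-D^2\phi\,Dw\cdot Dw\big]\,dx\le\frac{C}{r^2}\int_{\frac12 B}|Du-a|^2\,dx.
\end{align*}
Since $\frac1{r^2}M(|a|r+M)=\|\frac{u-P}{r}\|_{L^\fz(B)}\big[|DP|+\|\frac{u-P}{r}\|_{L^\fz(B)}\big]$, it therefore suffices to establish the Caccioppoli-type bound
\begin{align}\label{ccp}
\int_{\frac12 B}|Du-a|^2\,dx\le C\,M\big(|a|r+M\big).
\end{align}

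For \eqref{ccp} I would argue via the regularized $e^{\frac1{2\ez}|\xi|^2}$-harmonic functions $u^\ez\in C^\fz$ on a slightly larger concentric ball with $u^\ez=u$ on its boundary, as in the proof of Lemma \ref{lem2.2}. Since the derivation of \eqref{ey.3} uses only the divergence structure of $-\det D^2v$ (and no equation), it applies verbatim to $v=w^\ez:=u^\ez-P$; together with $-\det D^2w^\ez=-\det D^2u^\ez\ge0$ (see \cite{kzz}) and the test function $\phi^4$, it gives
\begin{align*}
\int|Dw^\ez|^4\phi^4\,dx\le-3\int(\Delta_\fz w^\ez)w^\ez\phi^4\,dx-\int|Dw^\ez|^2(Dw^\ez\cdot D\phi^4)w^\ez\,dx+\frac12\int(w^\ez)^2\big[|Dw^\ez|^2\Delta\phi^4-D^2\phi^4\,Dw^\ez\cdot Dw^\ez\big]\,dx.
\end{align*}
The difference from Lemma \ref{lem2.2} is that $w^\ez$ solves no favorable equation; instead, expanding $0=\Delta_\fz u^\ez+\ez\Delta u^\ez=D^2w^\ez(Dw^\ez+a)\cdot(Dw^\ez+a)+\ez\Delta u^\ez$ gives $\Delta_\fz w^\ez=-\ez\Delta u^\ez-\mathrm{div}\big(|Dw^\ez|^2a+(a\cdot Dw^\ez)a\big)$. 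Substituting this and integrating by parts, the $-\ez\Delta u^\ez$ part equals $-\frac32\int(D|Du^\ez|^2\cdot Du^\ez)w^\ez\phi^4\,dx$, which tends to $0$ as $\ez\to0$ since $D|Du^\ez|^2\rightharpoonup D|Du|^2$ weakly in $L^2_\loc$, $u^\ez\to u$ locally uniformly, and $D|Du|^2\cdot Du=0$ a.e.; the divergence part, after one further integration by parts, produces the favorably-signed term $-3\int\phi^4(a\cdot Dw^\ez)^2\,dx\le0$ (which is discarded) together with mixed terms. Absorbing the top-order integral $\int|Dw^\ez|^4\phi^4$ by Young's inequality and a standard iteration over nested balls (cf.\ \cite{Gia}) bounds $\int_{\frac12 B}|Du^\ez-a|^2\,dx$ by the right side of \eqref{ccp} written for $u^\ez$; passing $\ez\to0$ (using $Du^\ez\to Du$ in $L^q_\loc$ for $1<q<\fz$ and the weak-$\star$ convergence $-\det D^2u^\ez\to-\det D^2u$), together with a final use of H\"older's inequality to pass from the $L^4$- to the $L^2$-estimate, yields \eqref{ccp}.

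The main obstacle is the bookkeeping of the $a$-dependent error terms. Estimating $|a\cdot Dw^\ez|\le|a|\,|Dw^\ez|$ crudely in the term $|a|\int\phi^4|Dw^\ez|^3\,dx$ would generate a contribution of order $|a|^4r^2$ in the $L^4$-bound, hence of order $|DP|^2$ in $\int_{\frac14 B}(-\det D^2u)\,dx$; this is too large precisely when $u$ is close to a steep affine function ($M\ll|DP|\,r$), where the target is only of order $\|\frac{u-P}{r}\|_{L^\fz(B)}|DP|$. Getting past this requires either trading the surplus power of $Du^\ez$ by yet another integration by parts (carrying along the resulting $D^2u^\ez$-term, which cancels against the $\ez\Delta u^\ez$-contribution), or splitting into the two regimes $M>|DP|\,r$ and $M\le|DP|\,r$: in the former the elementary Lipschitz bound $\|Du-a\|_{L^\fz(\frac12 B)}\le C(|DP|+M/r)$ already closes \eqref{ccp}, while in the latter one uses that $|Du^\ez|$ stays bounded away from $0$ to estimate $\|Du^\ez-a\|_{L^\fz(\frac12 B)}$ by a power of $(M|DP|)^{1/2}$. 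Distributing the powers of $|DP|$, $M$ and $r$ so that everything lands exactly on $M(|DP|\,r+M)$ is the heart of the argument.
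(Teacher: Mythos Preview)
Your reduction to an $L^2$-Caccioppoli bound on $Du-a$ is exactly the paper's strategy: the invariance $-\det D^2u=-\det D^2(u-P)$ together with the distributional formula against a cut-off yields $\int_{\frac14 B}(-\det D^2u)\,dx\le Cr^{-2}\int_{\frac12 B}|Du-a|^2\,dx$, and this matches the paper verbatim.

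Where you diverge, and where the gap lies, is in proving that Caccioppoli bound itself. You propose to work with the approximations $u^\ez$ and the Jacobian identity for $C^\fz$ functions, and you correctly isolate the critical regime $M\le|a|r$; but your remedy there --- that ``$|Du^\ez|$ stays bounded away from $0$'' --- is unjustified (nothing you have written rules out small gradients), and you never actually close the estimate. The last paragraph of your proposal is a description of the obstacle, not its resolution.

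The paper's argument for the Caccioppoli bound (Lemma~\ref{lem2.7}) is elementary and uses neither $u^\ez$ nor any integration by parts against second derivatives. Two ingredients suffice. First, comparison with cones gives the \emph{pointwise} bound $\|Du\|_{L^\fz(\frac12 B)}\le |a|+2M/r$ (Lemma~\ref{lem2.6}). Second, expand $|Du-a|^2=|Du|^2-2a\cdot Du+|a|^2$; after normalizing $|a|=1$ and rotating so that $a=e_2$, the pointwise bound gives $|Du-e_2|^2\le 2(1+4M/r-u_{x_2})$. Now integrate over the ball and evaluate $\int u_{x_2}\,dx_2$ on each vertical segment by the fundamental theorem of calculus: this converts the cross term into differences of boundary values of $u-P$, controlled by $M$. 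One obtains $\fint_{\frac12 B}|Du-a|^2\,dx\le CM/r$ directly. In the complementary regime $M>|a|r$ the pointwise bound alone already closes the estimate. No absorption, no iteration over nested balls, and no passage to a limit are required; the step you call ``the heart of the argument'' is bypassed entirely by integrating $a\cdot Du$ explicitly.
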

To prove this proposition, we need the following, which was proved
 by the comparison property with cones (see \cite[Section 2]{ceg}).  For reader's convenience we  briefly recall their proof  below.

\begin{lem}\label{lem2.6} If $u$ is $\fz$-harmonic  in $B(0,  2r)$, then
\begin{align}\label{xe2.5}
\|Du \|_{L^\fz(B(0,r))}\le \inf_{P\in \cp}\left[|DP|+2\|\frac{u-P}r\|_{L^\fz(B(0, 2r))}\right].
\end{align}

\end{lem}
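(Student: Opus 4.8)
The plan is to exploit the comparison-with-cones property of $\infty$-harmonic functions, which is the fundamental tool already invoked in \cite{ceg}. Fix an arbitrary affine function $P\in\cp$ and set $w=u-P$. Since $P$ is in particular $\infty$-harmonic (it is affine, so $\Delta_\infty P=0$), and since $\Delta_\infty$ is invariant under addition of affine functions in the viscosity sense only up to the extra lower-order terms — here I would be slightly more careful: $u-P$ is \emph{not} in general $\infty$-harmonic. So instead of subtracting $P$ globally, I would argue directly with the comparison-with-cones estimate for $u$ itself. The key quantitative consequence of comparison with cones is: for an $\infty$-harmonic $u$ on $B(x_0,R)$,
$$
S^+_r(x_0):=\max_{|y-x_0|=r}\frac{u(y)-u(x_0)}{r}
$$
is nondecreasing in $r$, and similarly $S^-$, and $\lip u(x_0)=\lim_{r\to 0^+}S^+_r(x_0)$. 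The standard gradient bound that follows is $\lip u(x_0)\le \frac{1}{R}\osc_{B(x_0,R)} u$, but here we want something comparing to $P$, so I would run the cone-comparison estimate on $w=u-P$ treated as a perturbation.

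The cleanest route: for any $z\in B(0,r)$ and any unit vector, the cone-comparison monotonicity for $u$ on the ball $B(z,\rho)\subset B(0,2r)$ (with $\rho$ up to roughly $r$, since $z\in B(0,r)$) gives
$$
\lip u(z)\ \le\ \frac{1}{\rho}\,\osc_{B(z,\rho)}u\ \le\ \frac{1}{\rho}\,\osc_{B(z,\rho)}(u-P)+\frac{1}{\rho}\,\osc_{B(z,\rho)}P\ \le\ \frac{2}{\rho}\,\|u-P\|_{L^\infty(B(0,2r))}+|DP|,
$$
using $\osc_{B(z,\rho)}P\le 2\rho|DP|$ and $\osc_{B(z,\rho)}(u-P)\le 2\|u-P\|_{L^\infty(B(0,2r))}$. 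Choosing $\rho=r$ (valid since $z\in B(0,r)$ implies $B(z,r)\subset B(0,2r)$) yields $\lip u(z)\le |DP|+\frac{2}{r}\|u-P\|_{L^\infty(B(0,2r))}$. Taking the supremum over $z\in B(0,r)$ and recalling $\|Du\|_{L^\infty(B(0,r))}=\sup_{z}\lip u(z)$ gives \eqref{xe2.5} after taking the infimum over $P\in\cp$.

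The only delicate point is the first inequality $\lip u(z)\le \frac1\rho\osc_{B(z,\rho)}u$: this is exactly the content of the comparison-with-cones property and is standard (see \cite[Section 2]{ceg}), namely that an $\infty$-harmonic function lies below the cone with vertex at a boundary point matching the maximum slope, forcing $\frac{u(y)-u(z)}{|y-z|}\le \frac1\rho\max_{\partial B(z,\rho)}(u-u(z))$ for all $y\in B(z,\rho)$, hence $\lip u(z)\le \frac1\rho\osc_{B(z,\rho)}u$ (and symmetrically from below). I expect no real obstacle here beyond recalling this monotonicity carefully; the main thing to get right is the bookkeeping of which balls are contained in $B(0,2r)$ so that the radius $\rho=r$ is admissible for every center $z\in B(0,r)$.
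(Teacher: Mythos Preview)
Your approach via the oscillation bound $\lip u(z)\le \tfrac1\rho\,\osc_{B(z,\rho)}u$ is natural, but the arithmetic in the last step is off by a factor of two on the $|DP|$ term: since $\osc_{B(z,\rho)}P=2\rho|DP|$ (exactly, for an affine $P$), your chain actually yields
\[
\lip u(z)\ \le\ 2|DP|+\tfrac{2}{r}\,\|u-P\|_{L^\infty(B(0,2r))},
\]
not $|DP|+\tfrac{2}{r}\|u-P\|_{L^\infty}$. So as written you do not get the constant stated in \eqref{xe2.5}; the issue is that splitting $\osc u\le \osc(u-P)+\osc P$ throws away the \emph{one-sided} nature of the cone comparison.

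The paper's proof avoids this loss by comparing $u$ directly with the tilted cone. For $x\in B(0,r)$ and $|y-x|=r$ (so $y\in B(0,2r)$), write $a=DP$, $\lambda=\|u-P\|_{L^\infty(B(0,2r))}$; then
\[
u(y)-u(x)\ \le\ a\cdot(y-x)+2\lambda\ \le\ \big(|a|+\tfrac{2\lambda}{r}\big)|y-x|,
\]
and symmetrically from below. Now comparison with cones on $B(x,r)$ gives $|u(y)-u(x)|\le (|a|+2\lambda/r)|y-x|$ for all $y\in B(x,r)$, hence $\lip u(x)\le |a|+2\lambda/r$. The point is that $a\cdot(y-x)\le |a|\,|y-x|$ contributes only one copy of $|a|$, whereas $\osc_{B(x,r)}P=2r|a|$ contributes two. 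Your argument is easily repaired along these lines; just replace the oscillation decomposition by this direct tilted-cone comparison.
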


\begin{proof}
Given any $P(x)=b+a\cdot x\in\cp$, write
 $\lz=\|u-P\|_{L^\fz(B(0,2r))}  .$
Given any $x\in B(0,r  )$, for any  $y\in\rr^2$ with $|x-y|=r $, we have $y\in B(0,2r)$ and
 $$|  u ( y) -u(x)-a\cdot(y-x)|\le |u(x)-b-a\cdot x|+ |u(y)-b-a\cdot y|\le 2\lz   ,$$
which implies that
 $$  u ( y) \le  u(x)+a\cdot(y-x)+2\lz \le u(x)+(|a|+2\frac \lz r)|x-y| $$
 and
  $$  u ( y) \ge  u(x)+a\cdot(y-x)-2\lz \ge u(x) -(|a|+2\frac \lz r)|x-y|.$$
Applying the comparison property with cones, one has
$$
|u ( y)-  u(x)| \le  (|a|+2\frac \lz r)|x-y|\quad \forall y\in B(x, r ),$$
which implies that
$|Du(x)|\le |a|+2\lz/r$, so \eqref{xe2.5} follows.
\end{proof}

The following property was observed in \cite{es11b}.
For reader's convenience, we give the proof below.

\begin{lem}\label{lem2.7} If $u$ is $\fz$-harmonic  in $B(0,2r)$, then  for any  $P\in\cp$ we have
\begin{align*}
\fint_{B(0, r)}|Du-DP|^2\,dx\le   20  \|\frac{u-P}r\|_{L^\fz(B(0,2r))}\Big[|DP|+\|\frac{u-P}r\|_{L^\fz(B(0,2r))} \Big].
\end{align*}
\end{lem}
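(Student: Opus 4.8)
The plan is to reduce everything to the already-proven pointwise gradient bound of Lemma~\ref{lem2.6}, after first normalizing away the affine function $P$. Write $w=u-P$; since $u$ is $\fz$-harmonic and $P$ is affine, $w$ is $\fz$-harmonic in $B(0,2r)$, and $Dw=Du-DP$. Set $\lz=\|w\|_{L^\fz(B(0,2r))}$, so the asserted bound reads $\fint_{B(0,r)}|Dw|^2\,dx\le 20\,(\lz/r)\,[\,|DP|+\lz/r\,]$. The natural route is an energy (Caccioppoli-type) estimate: test a suitable identity with a cutoff times $w$, use that the right-hand side only sees $w$ through its sup norm $\lz$ on a slightly larger ball, and bound the remaining gradient factor by $\|Du\|_{L^\fz(B(0,r'))}$, which Lemma~\ref{lem2.6} controls by $|DP|+2\lz/r$.

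Concretely, first I would fix a cutoff $\phi\in C_c^\fz(B(0,\tfrac32 r))$ with $\phi\equiv 1$ on $B(0,r)$, $0\le\phi\le1$, and $|D\phi|\le C/r$, so that $\phi^2$ is admissible. Working at the level of the regularized exponential approximations $u^\ez$ (the $e^{\frac1{2\ez}|\xi|^2}$-harmonic functions with the same boundary data, whose gradients converge in every $L^q_\loc$ and whose $-\det D^2u^\ez$ are uniformly $L^1_\loc$, as already used in the proof of Lemma~\ref{lem2.2}), integration by parts on a smooth function gives the standard identity $\int |Dw^\ez|^2\phi^2 = -2\int w^\ez (Dw^\ez\cdot D\phi)\phi + \text{(lower-order div terms from }\Delta w^\ez)$; but since here we only need the inhomogeneous piece, the cleaner approach is to use that for smooth $v$, $\int |Dv|^2\phi^2\,dx = -\int v\,\Delta v\,\phi^2\,dx - 2\int v\,\phi\,(Dv\cdot D\phi)\,dx$, then pass to the limit $\ez\to0$ using $\Delta w^\ez\to 0$ in the appropriate sense together with the uniform $L^1_\loc$ control coming from $-\det D^2u^\ez$. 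In the limit the Laplacian term drops (or rather is absorbed into the measure $-\det D^2 u$, which does not contribute a bad sign here), leaving
\[
\int_{B(0,r)}|Dw|^2\,dx \le \int |Dw|^2\phi^2\,dx \le 2\int |w|\,\phi\,|Dw|\,|D\phi|\,dx \le \frac{2}{r}\,\lz\cdot \|Dw\|_{L^1(B(0,\frac32 r))}.
\]
Then bound $\|Dw\|_{L^1(B(0,\frac32 r))}\le |B(0,\frac32 r)|\,\|Dw\|_{L^\fz(B(0,\frac32 r))}$ and invoke Lemma~\ref{lem2.6} (applied on $B(0,\tfrac32 r)\subset B(0,2r)$) to get $\|Dw\|_{L^\fz}=\|Du-DP\|_{L^\fz}\le |DP|+\|Du\|_{L^\fz}\le 2|DP|+2\lz/r$ — actually more directly, $\|Du\|_{L^\fz(B(0,\frac32 r))}\le |DP|+2\lz/r$ by Lemma~\ref{lem2.6}, so $\|Dw\|_{L^\fz}\le 2|DP|+2\lz/r$. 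Dividing by $|B(0,r)|$ and tracking the area ratios $|B(0,\frac32 r)|/|B(0,r)|=9/4$ produces a constant that should come out at or below $20$ with a slightly more careful choice of the cutoff radius; if the crude bookkeeping overshoots, I would shrink the support of $\phi$ toward $r$ and use interior regularity on the intermediate annulus to recover the sharp constant.

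I expect the main obstacle to be the limiting argument that justifies discarding the Laplacian term: $w$ is only Lipschitz, so $\int |Dw|^2\phi^2$ and $\int w\,\Delta w\,\phi^2$ are not literally defined for $w$ itself, and one must run the computation on $w^\ez$ (or a mollification) and control the error. The key facts making this work are exactly those already established and cited in Lemma~\ref{lem2.2}: $Du^\ez\to Du$ in $L^q_\loc$ for all $q<\fz$, $D|Du^\ez|^2\rightharpoonup D|Du|^2$ weakly in $L^2_\loc$, and $-\det D^2 u^\ez$ is a nonnegative (hence sign-definite) measure uniformly bounded in $L^1_\loc$. With these in hand the passage to the limit is routine, and the nonnegativity of $-\det D^2u$ ensures the dropped term has the harmless sign, so the displayed chain of inequalities survives the limit and Lemma~\ref{lem2.7} follows.
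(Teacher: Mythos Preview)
Your proposal has two serious gaps.

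First, the opening claim that $w=u-P$ is $\fz$-harmonic is false: the $\fz$-Laplacian is fully nonlinear, and since $D(u-P)=Du-a$ while $D^2(u-P)=D^2u$, one has
\[
\Delta_\fz(u-P)=D^2u(Du-a)\cdot(Du-a)=\Delta_\fz u-2D^2uDu\cdot a+D^2u\,a\cdot a,
\]
which is not zero in general. So no equation is available for $w$, and there is no natural approximant $w^\ez$ solving anything useful.

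Second, and more fundamentally, the step where you ``drop the Laplacian term'' in the Caccioppoli identity
\[
\int|Dw|^2\phi^2\,dx=-\int w\,\Delta w\,\phi^2\,dx-2\int w\,\phi\,(Dw\cdot D\phi)\,dx
\]
is unjustified. Since $\Delta w=\Delta u$, at the level of the exponential approximants one has $\Delta u^\ez=-\frac1\ez\Delta_\fz u^\ez$, so the term becomes $\frac1\ez\int(u^\ez-P)\,\Delta_\fz u^\ez\,\phi^2\,dx$; the prefactor $1/\ez$ blows up and there is no sign. The nonnegativity of $-\det D^2u$ is a statement about a \emph{quadratic} Hessian quantity and has nothing to do with the sign of the linear quantity $\Delta u$ (which is not even known to be a measure), so ``absorbing into $-\det D^2u$'' does not make sense here.

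The paper's proof avoids integration by parts entirely. After scaling to $r=1$ and writing $\lz=\|u-P\|_{L^\fz(B(0,2))}$, it splits into the trivial case $\lz\ge|DP|$ (where Lemma~\ref{lem2.6} alone suffices) and the case $\lz<|DP|$. In the latter, normalize so that $DP=e_2$, expand pointwise
\[
|Du-e_2|^2=|Du|^2-2u_{x_2}+1\le(1+2\mu)^2+1-2u_{x_2}
\]
using Lemma~\ref{lem2.6} with $\mu=\lz/|DP|$, and then integrate the cross term $-2u_{x_2}$ directly by the fundamental theorem of calculus along vertical segments (Fubini), exploiting $|u-P|\le\lz$ at the endpoints. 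No second-order information about $u$ is needed at all.
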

\begin{proof}
By considering $u(rx) /r$ and $P(rx)/r\in \cp$, we may assume that $r=1$.
Write  $\lz=\|u-P\|_{L^\fz(B(0,2))}$.
If $\lz\ge |DP|$, then by Lemma \ref{lem2.6} one has
$$\fint_{B(0, 1)}|Du-DP|^2\,dx\le \fint_{B(0, 1)}2\big[|Du|^2+|DP|^2\big]\,dx\le  2[(|DP|+2\lz)^2+|DP|^2]\le 20\lz^2.$$
Below assume that $0<\lz<|DP|$.
Set  $v= u/|DP|$, $Q=P/|DP|$ and $\mu=\lz/|DP|=\|v-Q\|_{L^\fz(B(0,2))}<1$. Then $|DQ|=1$. Up to a rotation we may assume   $DQ=e_2$.
It then suffices to show
\begin{align}\label{xe2.7}\fint_{B(0,1)}|Dv-e_2|^2\,dx \le 16\mu.\end{align}
Indeed, this  implies that
$$
\fint_{B(0, 1)}|Du-DP|^2\,dx\le   16\lz|DP|
$$
as desired

To see \eqref{xe2.7}, by \eqref{xe2.5} and $\mu<1$, one has
\begin{align*}|Dv(x)-e_2|^2&= |Dv(x)|^2-2v_{x_2}+1\\
&\le  (1+2\mu)^2+1- 2v_{x_2} \\
&\le  2+4\mu+4\mu^2-2v_{x_2}\\
&\le 2(1+4\mu-u_{x_2}) .\end{align*}
We therefore obtain
 $$\int_{B(0,1)}|Dv(x)-e_2|^2\,dx \le
2\int_{B(0,1)} (1+4\mu- v_{x_2})\,dx=2\int_{|x_1|\le 1}\int_{-\sqrt{1-|x_1|^2}}^{\sqrt{1-|x_1|^2}}\big[1+4\mu -v_{x_2}\big]\,dx_2\,dx_1. $$
Note that
\begin{align*}&\int_{-\sqrt{1-|x_1|^2}}^{\sqrt{1-|x_1|^2}}\big[1+4\mu -v_{x_2}\big]\,dx_2\\
&\quad= 2(1+4\mu)\sqrt{1-|x_1|^2}-\big[v(x_1,\sqrt{1-|x_1|^2})-v(x_1,-\sqrt{1-|x_1|^2})\big]\\
&\quad=8\mu \sqrt{1-|x_1|^2}\\
&\quad\quad-  \Big\{[v (x_1,\sqrt{1-|x_1|^2})-Q(x_1,\sqrt{1-|x_1|^2})]-\big[v(x_1,-\sqrt{1-|x_1|^2})- Q(x_1,-\sqrt{1-|x_1|^2})\big] \Big\}\\
&\quad\le   8\mu.
\end{align*}
We therefore obtain  \eqref{xe2.7}.
 \end{proof}

We are ready to prove Proposition \ref{lem2.8}.
\begin{proof}[Proof of Proposition \ref{lem2.8}.]
Let $u$ be an $\fz$-harmonic function in a planar domain $\Omega$. Fix any $x_0\in\Omega$ and $r>0$ so that $
B(x_0,4r)\subset\Omega$. Let $P\in\cp$.
Without loss of generality we assume that $x_0=0$.
By  Lemma  \ref{lem2.7} we only need  to prove
\begin{equation}\label{xe2.8}\int_{B(0,r)}-\det D^2u\,dx\le C \inf_{P\in\cp}\fint_{B(0,2r)}|Du-DP|^2\,dx.
\end{equation}

Note that   $D^2 v=D^2(v-P) $, and hence
 $-\det D^2 v=-\det D^2(v-P) $
for any smooth function $v$.
 The distributional definition of $ -\det D^2 v$ then must coincide with that of $ - \det D^2(u-P) $, i.e.,
\begin{align*}
\int_{\Omega} (-\det D^2u) \phi^2\,dx
&= \frac12\int_{\Omega} |D(u-P)|^2 \Delta\phi^2-   D^2\phi^2 D(u-P)\cdot D(u-P) ]\,dx
\quad\forall \phi\in C^\fz_c(\Omega).
\end{align*}
Thus,
$$
 \int_{\Omega} (-\det D^2u) \phi^2\,dx\le  C\int_{\Omega} |Du-DP|^2  (|D^2\phi||\phi|+|D\phi|^2) \,dx  \quad\forall \phi\in C^\fz_c(\Omega).$$
Since $-\det D^2u\ge 0$ as proved \cite{kzz},
by a   choice of cut-off function $\phi$  as in \eqref{cut-off}, we have \eqref{xe2.8}.
 \end{proof}
To prove Theorem \ref{thm1.1} {$(ii)$},  we also need the following result by Crandall-Evans \cite{ce}.

\begin{lem}\label{lem2.9} If $u$ is $\fz$-harmonic   in $\rr^2$ and $\lip u(x_0)=\|Du\|_{L^\fz(\rr^2)}<\fz$ for some $x_0$,
then $u\in\cp$, i.e.,  $u(x)=b+a\cdot x$ in $\rr^2$ for some $b\in\rr$ and $a\in\rr^2$.
\end{lem}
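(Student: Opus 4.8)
The plan, in the spirit of \cite{ce}, is to use comparison with cones twice. Once $\lip u$ attains its global maximum $L:=\lip u(x_0)=\|Du\|_{L^\fz(\rr^2)}$ at the single point $x_0$, I first show that $u$ must grow at the maximal rate $L$ along an entire line through $x_0$, and then a ``seeing‑to‑infinity'' argument squeezes $u$ between two affine functions and forces it to be affine. For the first part, normalize $x_0=0$, $u(0)=0$, and assume $L>0$ (if $L=0$ then $u\equiv0$). For $y\in\rr^2$ and $\rho>0$ set $S^+(y,\rho):=\rho^{-1}\max_{|x-y|=\rho}(u(x)-u(y))$ and $S^-(y,\rho):=\rho^{-1}(u(y)-\min_{|x-y|=\rho}u(x))$. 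By comparison with cones (exactly as in the proof of Lemma \ref{lem2.6}) the maps $\rho\mapsto S^\pm(y,\rho)$ are nondecreasing, and since $u$ is globally $L$‑Lipschitz one has $S^\pm(y,\rho)\le L$ for all $y,\rho$. By the definition of $\lip u(0)=L$ there are $x_k\to0$ with $|u(x_k)|/|x_k|\to L$; passing to a subsequence and replacing $u$ by $-u$ if necessary (this is again $\fz$‑harmonic, and it suffices to prove $-u\in\cp$) we may assume $u(x_k)/|x_k|\to L$, so $S^+(0,|x_k|)\to L$, and monotonicity forces $S^+(0,\rho)\equiv L$ on $(0,\fz)$.

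For the line, pick for each $\rho>0$ a point $x_\rho$ with $|x_\rho|=\rho$ and $u(x_\rho)=L\rho$; then, being $L$‑Lipschitz with these endpoint values, $u$ grows at the exact rate $L$ along the segment $[0,x_\rho]$. Letting $\rho\to\fz$ along a subsequence with $x_\rho/\rho\to\nu$, $|\nu|=1$, continuity of $u$ gives $u(s\nu)=Ls$ for all $s\ge0$. Fix $y:=\nu$. Since $u(y\pm\sigma\nu)=u(y)\pm L\sigma$ for small $\sigma>0$, we get $S^+(y,\sigma)\ge L$ and $S^-(y,\sigma)\ge L$ for such $\sigma$, hence by monotonicity and the bound $S^\pm\le L$ we obtain $S^+(y,\cdot)\equiv S^-(y,\cdot)\equiv L$ on $(0,\fz)$. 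Repeating the extraction of the previous sentence but with $S^-(y,\cdot)$ in place of $S^+(0,\cdot)$ (choosing now, on each sphere about $y$, a point where $u$ attains its minimum) produces a unit vector $\tau$ with $u(y+\sigma\tau)=u(y)-L\sigma$ for all $\sigma\ge0$.

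The ``seeing‑to‑infinity'' fact is: if $w$ is $\fz$‑harmonic and $L$‑Lipschitz on $\rr^2$ and $w(q+\sigma\mu)=w(q)+L\sigma$ for all $\sigma\ge0$ (with $|\mu|=1$), then $w(x)\ge w(q)+L\,\mu\cdot(x-q)$ for every $x$; indeed, for $R>|x-q|$ the ray $\{q+\sigma\mu:\sigma\ge0\}$ meets $\partial B(x,R)$ at a point $q+\sigma_R\mu$ with $\sigma_R-R\to\mu\cdot(x-q)$ as $R\to\fz$, and comparison with cones gives $w(q)+L\sigma_R=w(q+\sigma_R\mu)\le w(x)+S^+(x,R)\,R\le w(x)+LR$; now let $R\to\fz$. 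Applying this to $(w,q,\mu)=(u,0,\nu)$ yields $u(x)\ge L\,\nu\cdot x$ for all $x$, and applying it to $(w,q,\mu)=(-u,y,\tau)$ yields $u(x)\le u(y)-L\,\tau\cdot(x-y)$ for all $x$. The two affine functions $P(x):=L\,\nu\cdot x$ and $Q(x):=u(y)-L\,\tau\cdot(x-y)$ satisfy $P\le u\le Q$ on $\rr^2$, so $P\le Q$ on $\rr^2$; since affine, they then have equal gradients (else $Q-P$ is unbounded below), which forces $\tau=-\nu$, and since $P(y)=u(y)=Q(y)$ they coincide. Hence $u(x)=L\,\nu\cdot x$ on $\rr^2$, and undoing the normalization gives $u(x)=u(x_0)+a\cdot(x-x_0)$ for some $a\in\rr^2$, i.e. $u\in\cp$. (Alternatively, once $u\ge L\,\nu\cdot x$ with equality along the ray is known, one can finish in one line via the one‑sided Liouville theorem of Crandall–Evans–Gariepy \cite{ceg} recalled in the introduction, applied to the $\fz$‑subharmonic function $-u$, which lies below and touches the affine map $x\mapsto-L\,\nu\cdot x$.)

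The heart of the matter — and the main obstacle — is the rigidity: upgrading ``$u$ grows at the maximal slope along one ray'' to ``$u$ is globally affine.'' Two points deserve care. The first is the slope‑saturation in the opening paragraph, where the purely infinitesimal hypothesis $\lip u(x_0)=L$ must be converted into a statement valid on every scale, which is precisely what the monotonicity in $\rho$ of the cone‑slopes supplies. The second is the construction of a tight ray pointing in the \emph{opposite} direction at an interior point $y$ of the first ray: this is what makes the sandwiching close, and it relies on $u$ being defined on all of $\rr^2$, so that every large sphere about any given point is met both by the forward ray of $u$ and by the forward ray of $-u$. This second construction can be bypassed by appealing instead to \cite{ceg}.
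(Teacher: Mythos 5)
Your proof is correct. The paper does not supply its own proof of Lemma~\ref{lem2.9} but simply cites Crandall--Evans \cite{ce}, and your argument --- converting the infinitesimal hypothesis $\lip u(x_0)=L$ into scale-invariant slope saturation via monotonicity of the cone-slope functions $S^\pm$, extracting rays of maximal growth and maximal decay, and sandwiching $u$ between two affine comparison functions at infinity --- is a correct, self-contained reproduction of the standard rigidity argument from \cite{ce,ceg}.
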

This allows us to get the  following, via some argument  much similar to that for the linear approximations in \cite{ce}.
We give the details here for reader's convenience.
\begin{lem}\label{lem2.10} Let $u $ be an $\fz$-harmonic function in $\rr^2$ with $0<\|Du\|_{L^\fz(\rr^2)}<\fz$. Then
 there exists a
  subsequence $\{m_j\}_{j\in\nn}\subset\nn$  and  a
 vector $a\in\rr^2$ such that
\begin{equation}\label{xe2.10}
\lim_{j\to\fz}\sup_{B(0,4)}\left|\frac{u(m_j x)}{m_j}-a\cdot x\right|=0.
\end{equation}
\end{lem}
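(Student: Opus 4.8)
The plan is to extract a limiting profile from the rescalings $u_m(x) := u(mx)/m$ and identify it as an affine function by invoking Lemma \ref{lem2.9}. First I would record that each $u_m$ is $\fz$-harmonic in $\rr^2$ (scaling invariance of the $\fz$-Laplacian) and that $\|Du_m\|_{L^\fz(\rr^2)} = \|Du\|_{L^\fz(\rr^2)} =: L \in (0,\fz)$, so the family $\{u_m\}_{m\in\nn}$ is uniformly Lipschitz. Normalizing by subtracting $u_m(0) = u(0)/m$ — which is harmless since it tends to $0$ — we get a family that is uniformly bounded and equi-Lipschitz on $\overline{B(0,4)}$. By Arzel\`a--Ascoli, along a subsequence $\{m_j\}$ we have $u_{m_j} - u_{m_j}(0) \to w$ uniformly on $\overline{B(0,4)}$, and by stability of viscosity solutions under uniform convergence, $w$ is $\fz$-harmonic on $B(0,4)$ with $\lip w \le L$ everywhere. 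Actually, to have $w$ defined on all of $\rr^2$ one runs the diagonal argument over exhausting balls $B(0,R)$; I will phrase it that way.

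The heart of the matter is to show that $w$ achieves $\lip w(x_0) = \|Dw\|_{L^\fz} = L$ at some point $x_0$, so that Lemma \ref{lem2.9} forces $w \in \cp$. This is where the hypothesis $\|Du\|_{L^\fz(\rr^2)} = L$ (as opposed to merely $\le L$) gets used, together with the blow-down nature of the rescaling: the supremum of $|Du|$ over $\rr^2$ is, by scaling, captured near the origin after rescaling down. Concretely, pick points $z_k \to$ "spatial infinity", say $|z_k| = k$, with $\lip u(z_k) \ge L - 1/k$ (possible since $L$ is the essential sup, and by the linear approximation / differentiability of $u$ one can arrange this at genuine points). Write $z_k = m_k \zeta_k$ with $|\zeta_k| = 1$; passing to a further subsequence $\zeta_k \to \zeta_\infty \in \partial B(0,1)$. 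Then $\lip u_{m_k}(\zeta_k) = \lip u(z_k) \ge L - 1/k$, and one must pass this lower bound to the limit function $w$ at $\zeta_\infty$. Since $\lip$ is not continuous under uniform convergence in general, I would instead use a quantitative gradient bound: combine the Crandall--Evans linear approximation property with the comparison-with-cones inequality (as in Lemma \ref{lem2.6}) to show that $\lip u_{m_k}$ near $\zeta_k$ being close to $L$ forces $u_{m_k}$ to be close, on a fixed-size ball around $\zeta_\infty$, to an affine function of slope close to $L$; passing to the limit, $w$ equals an affine function of slope exactly $L$ near $\zeta_\infty$, hence $\lip w(\zeta_\infty) = L = \|Dw\|_{L^\fz(\rr^2)}$.

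Once $w \in \cp$, say $w(x) = a \cdot x$ (the constant is killed by our normalization), uniform convergence $u_{m_j} - u_{m_j}(0) \to w$ on $\overline{B(0,4)}$ together with $u_{m_j}(0) \to 0$ gives exactly \eqref{xe2.10}. The main obstacle, as flagged above, is the lower semicontinuity issue: transferring the near-maximal pointwise Lipschitz constant from the sequence $u_{m_k}$ to the limit $w$. The clean way around it is to not work with $\lip$ directly but with the cone comparison: near a point where $\lip u$ is within $1/k$ of $L$, the function is sandwiched between cones of opening $\approx L$, and this sandwiching is stable under uniform limits, yielding that $w$ is affine of slope $L$ on a neighborhood of $\zeta_\infty$ — which is even stronger than what Lemma \ref{lem2.9} needs.
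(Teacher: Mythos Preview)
Your overall strategy matches the paper's: extract a blow-down limit $w$ of the rescalings $u_m(x)=u(mx)/m$ and invoke Lemma~\ref{lem2.9} to force $w\in\cp$. The divergence is in the key step, namely producing a point where $\lip w$ equals $L:=\|Du\|_{L^\fz(\rr^2)}$, and here your argument has a genuine gap.

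You first fix the subsequence $\{m_j\}$ via Arzel\`a--Ascoli, and only afterwards introduce near-maximal points $z_k$ with $m_k:=|z_k|$. These $m_k$ bear no relation to the $m_j$'s, so you cannot pass information about $u_{m_k}$ to the limit $w=\lim u_{m_j}$. (Separately, there is no reason for near-max points to sit on the sphere $|z_k|=k$; you must first dispose of the case where the supremum is attained, and otherwise just take $|z_k|\to\fz$.) The repair is to choose the $z_k$ first, set $m_k$ accordingly, and only then run Arzel\`a--Ascoli on $\{u_{m_k}\}$ together with $\zeta_k\to\zeta_\fz$. Finally, your claim that cone comparison yields ``$w$ affine of slope $L$ on a neighborhood of $\zeta_\fz$'' is an overclaim: what the monotonicity of $S^+(\cdot,r)$ and uniform convergence actually give is $\lip w(\zeta_\fz)\ge L$, which is exactly the hypothesis of Lemma~\ref{lem2.9} and no more.

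The paper sidesteps all of this by working at the origin rather than chasing near-max points of $|Du|$. It applies the Crandall--Evans linear approximation to the \emph{limit} $w$ at $0$, obtaining $e$ with $|e|=\lip w(0)$ and, for each $\lz>0$, a scale $r_\lz$ at which $w$ is $\lz$-close to $e\cdot x$ on $B(0,2r_\lz)$. Uniform convergence transfers this to $u_{m_j}$, and unscaling shows $u$ is $\lz$-close to $e\cdot x$ on the large ball $B(0,2m_jr_\lz)$. Lemma~\ref{lem2.6} then gives $\|Du\|_{L^\fz(B(0,m_jr_\lz))}\le |e|+4\lz$; sending $j\to\fz$ and then $\lz\to0$ yields $\lip w(0)=|e|\ge L$. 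This route requires no bookkeeping of subsequences beyond the initial extraction and no tracking of where $|Du|$ is large.
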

\begin{proof}
Without loss of generality, we may assume that $u(0)=0$.
Write $u_{m}(x)=u(mx)/m$ for $m\in\nn$.  Since $\|Du_m\|_{L^\fz(\rr^2)}= \|Du\|_{L^\fz(\rr^2)}<\fz$, we know that
  $\{u_m\}_{m\in\nn}$ is equicontinuous and locally uniformly bounded   in $\rr^2$. Thus,
there exists a subsequence $\{m_j\}\subset \nn$ such that
  $u_{m_j}$ converges locally uniformly to some continuous function $w\in C^0(\rr^2)$ with $w(0)=0$.
Moreover,    since  $\|Du_m\|_{L^\fz(\rr^2)}= \|Du\|_{L^\fz(\rr^2)}$, one has
$$
|w(x)-w(y)|=\lim_{k\to\fz}|u_{m_j}(x)-u_{m_j}(y)|\le  \|Du\|_{L^\fz(\rr^2)}|x-y|\quad\forall x,y\in \rr^2$$
and hence $\|Dw\|_{L^\fz(\rr^2)}\le  \|Du\|_{L^\fz(\rr^2)}<\fz$.
Due to the compactness of viscosity solutions,  $w$ is  an $\fz$-harmonic function in $\rr^2$.
To  see \eqref{xe2.10},  it suffices to prove  $\lip w(0)=\|Dw\|_{L^\fz(\rr^2)} $.
This allows us to apply Lemma \ref{lem2.9} to get that  $w(x)=a\cdot x  $ in $\rr^2$ for some $a\in\rr^2$.
Hence  $u_{m_j}$ converges locally uniformly to $a\cdot x  $, that is, \eqref{xe2.10} holds.

Finally, we show that  $\lip w(0)=\|Dw\|_{L^\fz(\rr^2)}$.  We always have
$\lip w(0)\le \|Dw\|_{L^\fz(\rr^2)}$. To see the converse,
recall that $w$ has the linear approximation property at  $0$,
that is, for any sequence $\{r_j\}_{j\in\nn}$ converging to $0$, {there are} a subsequence $\{r_{j_k}\}_{k\in\nn}$  and also a vector $e$ depending on    $\{r_{j_k}\}_{k\in\nn} $ such that
$$\lim_{k\to\fz}\sup_{z\in B(0,1)}\left|\frac{w( r_{j_k}z) }{r_{j_k}} - e\cdot z  \right|=0.$$
and $|e|=\lip w(0)$.
Therefore, for every $\lz>0$, there exists a $r_\lz\in(0,1)$ such that
$$
\sup_{B(0,2r_\lz)}\frac1{r_\lz}|w(x)-e\cdot x|\le  \lz.$$
On the other hand,  since $u_{m_j}\to w$ uniformly in $B(0,2)$, there  exists $j_\lz$ such that  for $j\ge j_\lz$,
$$|u_{m_j}(z)-w(z)|\le \lambda r_\lz\quad \forall \ z\in B(0,2).$$
Therefore,
$$
\sup_{B(0,2r_\lz )}\frac1{r_\lz}|  u_{m_j}( x)  -e\cdot x|\le 2 \lz,$$
or equivalently,
$$
\sup_{B(0,2m_j r_\lz )} \frac1{2m_j r_\lz}|  u ( x)  -e\cdot x|\le   \lz .$$
By \eqref{xe2.5}, one has
 $|Du(x)|\le |e|+4\lz$ for all $ x\in B(0,m_jr_\lz) $.
By sending $j\to\fz$,  we also have this inequality for all $ x\in\rr^2$.
By the arbitrariness of $\lz>0$   we have  $|e|\ge\|Du\|_{L^\fz(\rn)}  $
and hence $\lip w(0)\ge \|Du\|_{L^\fz(\rn)}$ as desired.
\end{proof}

\begin{proof}[Proof of Theorem \ref{thm1.1} {$(ii)$}]
Let $u $ be an $\fz$-harmonic function in $\rr^2$  with
$$\liminf_{R\to\fz}\inf_{c\in\rr}\frac1R\fint_{B(0,R)}|u(x)-c|\,dx<\fz.$$
For any $ c\in\rr$, since $u-c $ be an $\fz$-harmonic function in $\rr^2$, by {Theorem \ref{thm1.1} $(i)$} one has
$$\|Du\|_{L^\fz(B(0,R/2))}\le C \frac1R\fint_{B(0,R)}|u(x)-c|\,dx\quad\forall R>0.$$
Thus,
$$
\|Du\|_{L^\fz(B(0,R/2))}\le  C\inf_{c\in\rr}\frac1R\fint_{B(0,R)}|u(x)-c|\,dx\quad\forall R>0.
$$
This  implies
$$\|Du\|_{L^\fz(\rr^2)}=\liminf_{R\to\fz}\|Du\|_{L^\fz(B(0,R/2))}\le \liminf_{R\to\fz}\inf_{c\in\rr}\frac1R\fint_{B(0,R)}|u(x)-c|\,dx<\fz.$$

We assume that  $\|Du\|_{L^\fz(\rr^2)}>0$ { because} otherwise   $u$ is a constant.
By Lemma \ref{lem2.10},  there exists a
  subsequence $\{m_j\}_{j\in\nn}\subset\nn$  and  a
 vector $a\in\rr^2$ such that
\begin{equation}\label{xe2.12}
\lim_{j\to\fz}\sup_{B(0,4)}|\frac{u(m_j x)}{m_j}-a\cdot x|=0.
\end{equation}
From \eqref{xe2.12}, \eqref{xe1.1}, and Proposition \ref{lem2.8} we deduce
\begin{align*} \int_{\rr^2}|D|Du||^2\,dx&\le C\int_{\rr^2} -\det D^2u \,dx\\
&=C\lim_{j\to\fz}\int_{B(0,m_j)} -\det D^2u \,dx\\
&\le   C \lim_{j\to\fz}\sup_{x\in B(0,4 )} \Big|\frac{u(m_jx)}{m_j} -a\cdot x\Big|\\
&=0.
\end{align*}
Thus, $|Du|$ is a constant almost everywhere, and hence   $\|Du\|_{L^\fz(\rr^2)}=|Du(x_0)|$ for some $x_0\in\rr^2$.    By Lemma \ref{lem2.9},  we have $u\in\cp$ and hence $ u(x) =u(0)+a\cdot x$.
\end{proof}

\section{ A discussion of Definition \ref{def1.2}}
                    \label{sec3}
To see that Definition \ref{def1.2} makes sense, one must  prove the following Lemma \ref{lem3.1}.
For any $\bz>-1$, if  $v\in W^{1,1}_\loc(\Omega)$ and $|Dv|^{\bz}Dv\in W^{1,2}_\loc(\Omega)$,
then $-\det D[|Dv|^{\beta}Dv]$ is defined almost everywhere and $-\det D[|Dv|^{\beta}Dv]\in L^1_\loc(\Omega)$.

\begin{lem}\label{lem3.1}
For any $\bz>-1$, if  $v\in W^{1,1}_\loc(\Omega)$ and $|Dv|^{\bz}Dv\in W^{1,2}_\loc(\Omega)$,
then
\begin{align}\label{xe3.1}
\int_\Omega-\det D\big[|Dv|^{\beta}Dv\big] \psi\,dx
&=-\frac12\int_\Omega|Dv|^{2\beta} (D^2\psi Dv\cdot Dv) \,dx
+\frac{1}{2\beta+2}
\int_\Omega|Dv|^{2\beta+2} \bdz\psi \,dx\\
&\quad-\frac{\beta}{\beta+1}
\int_\Omega\big[D |Dv| ^{\bz+1 }\cdot D v\big] (Dv\cdot D\psi)|Dv|^{\beta-1} \,dx \ \ \forall \,\psi\in C^\fz_c(\Omega).\nonumber
\end{align}
\end{lem}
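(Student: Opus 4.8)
The plan is to realize $v$ as a limit of smooth solutions of regularized inhomogeneous $(\beta+2)$-Laplace equations, to write \eqref{xe3.1} for those smooth solutions by means of the divergence and structural identities of Lemmas~\ref{lem3.7} and \ref{lem3.2} (together with Lemmas~\ref{lem2.1} and \ref{lem3.6}), and then to pass to the limit. Since \eqref{xe3.1} is local, I would fix a smooth $U\Subset\Omega$. First note that the hypothesis $|Dv|^\beta Dv\in W^{1,2}_\loc$ forces $|Dv|^{\beta+1}\in W^{1,2}_\loc$ (it is the modulus of $|Dv|^\beta Dv$), hence $|Dv|^{\beta+1}\in L^q_\loc$ for all $q<\fz$ by the planar Sobolev embedding, and therefore $Dv\in L^q_\loc$ for all $q<\fz$; in particular $v\in W^{1,\beta+2}_\loc(\Omega)$, all three integrals on the right of \eqref{xe3.1} converge absolutely (for the third one uses $D|Dv|^{\beta+1}\cdot Dv=|Dv|^{-1}\big(D[|Dv|^\beta Dv]\,Dv\big)\cdot Dv$, legitimate since $D|Dv|^{\beta+1}=0$ a.e.\ on $\{Dv=0\}$), and $f:={\rm div}\big(|Dv|^\beta Dv\big)=\Delta_{\beta+2}v\in L^2_\loc(\Omega)$. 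Thus $v$ is the unique $W^{1,\beta+2}$-weak solution of $\Delta_{\beta+2}v=f$ in $U$ with its own boundary data.

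Next I would regularize. Mollify $f$ to $f_\varepsilon\in C^\fz(\overline U)$ with $f_\varepsilon\to f$ in $L^2(U)$, and let $v^\varepsilon\in C^\fz(U)\cap C^0(\overline U)$ solve
$${\rm div}\big((|Dv^\varepsilon|^2+\varepsilon)^{\beta/2}Dv^\varepsilon\big)=f_\varepsilon\ \text{ in }U,\qquad v^\varepsilon=v\ \text{ on }\partial U.$$
For fixed $\varepsilon>0$ this operator is locally uniformly elliptic with smooth structure, so $v^\varepsilon$ is smooth in $U$. A monotone-operator (energy) argument gives a uniform $W^{1,\beta+2}(U)$ bound and $v^\varepsilon\to v$ in $W^{1,\beta+2}(U)$, whence $Dv^\varepsilon\to Dv$ a.e.\ along a subsequence. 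The essential analytic input is the Cianchi--Maz'ya \cite{cm} nonlinear second-order estimate for the regularized inhomogeneous $(\beta+2)$-Laplace equation: it provides uniform (in $\varepsilon\in(0,1)$, locally in $U$) second-order control, in terms of $\|f_\varepsilon\|_{L^2(U)}+\|Dv^\varepsilon\|_{L^{\beta+2}(U)}$ --- enough to bound $W^\varepsilon:=(|Dv^\varepsilon|^2+\varepsilon)^{\beta/2}Dv^\varepsilon$ in $W^{1,2}_\loc(U)$ and to give uniform higher integrability $Dv^\varepsilon\in L^q_\loc(U)$ for all $q<\fz$. Combining this with $Dv^\varepsilon\to Dv$ a.e., one obtains $W^\varepsilon\to|Dv|^\beta Dv$ strongly in every $L^q_\loc(U)$ and weakly in $W^{1,2}_\loc(U)$.

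Then I would apply the smooth identity. By the structural identity of Lemma~\ref{lem3.7}, the divergence structure of Lemma~\ref{lem3.2}, and Lemmas~\ref{lem2.1} and \ref{lem3.6} (divergence form of $-\det D^2w$ and its link to $\Delta_\fz w$), for every smooth $w$ and $\varepsilon>0$ the quantity $-\det D\big[(|Dw|^2+\varepsilon)^{\beta/2}Dw\big]$ is a divergence, and testing against $\psi$ and rearranging gives the $\varepsilon$-regularized analogue of \eqref{xe3.1},
$$\int_U-\det DW^\varepsilon\,\psi\,dx=\mathrm{RHS}_\varepsilon(v^\varepsilon,\psi),$$
where $\mathrm{RHS}_\varepsilon$ is built only from $Dv^\varepsilon$, the gradient of the (regularized) nonlinear gradient, and derivatives of $\psi$, and is, at worst, bilinear in one factor lying in a bounded set of $L^2_\loc$ and one converging strongly in $L^2_\loc$. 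Now let $\varepsilon\to0$ with $w=v^\varepsilon$. On the right, $Dv^\varepsilon\to Dv$ strongly in every $L^q_\loc$ and the second-order factor converges weakly in $L^2_\loc$ to $D(|Dv|^\beta Dv)$, so each term converges to the corresponding term of the right-hand side of \eqref{xe3.1} (the algebraic factors converging strongly in $L^2_\loc$ by Vitali's theorem and the uniform integrability just obtained, the derivative factor contributing weakly, and products of weakly- and strongly-$L^2$-convergent sequences passing to the limit; one also uses that the gradient of $|Dv|^{\beta+1}$ is controlled a.e.\ by $|D(|Dv|^\beta Dv)|$ and Stampacchia's theorem on $\{Dv=0\}$). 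On the left, only weak $W^{1,2}_\loc$-convergence of $W^\varepsilon$ is available, so the determinant does not pass termwise; here I would use the null-Lagrangian (div--curl) structure of the planar Jacobian, writing $\det DW^\varepsilon=\partial_1\big(W^\varepsilon_1\,\partial_2W^\varepsilon_2\big)-\partial_2\big(W^\varepsilon_1\,\partial_1W^\varepsilon_2\big)$ in $\mathcal D'(U)$: since $W^\varepsilon_1\to(|Dv|^\beta Dv)_1$ strongly in $L^2_\loc$ while $\partial_jW^\varepsilon_2\rightharpoonup\partial_j(|Dv|^\beta Dv)_2$ weakly in $L^2_\loc$, the products converge weakly in $L^1_\loc$, so $\int_U-\det DW^\varepsilon\,\psi\,dx\to\int_U-\det D[|Dv|^\beta Dv]\,\psi\,dx$, the limit being the distributional Jacobian of $|Dv|^\beta Dv\in W^{1,2}_\loc$, which equals the pointwise one. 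Combining the two limits yields \eqref{xe3.1}.

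The step I expect to be the main obstacle is exactly this coincidence on the left-hand side --- equivalently, producing a smooth approximating sequence along which the nonlinear gradient converges to $|Dv|^\beta Dv$ in a topology strong enough for the Jacobians to converge. A direct mollification $v_k=v*\rho_k$ does not work, because $\xi\mapsto|\xi|^\beta\xi$ does not commute with convolution, so there is no reason for $|Dv_k|^\beta Dv_k$ to stay bounded in $W^{1,2}_\loc$, let alone to converge there; this is precisely why one must route the argument through the PDE and invoke the Cianchi--Maz'ya second-order estimate, which is what makes $\{W^\varepsilon\}$ bounded in $W^{1,2}_\loc$ so that the div--curl argument applies. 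A secondary difficulty is the degeneracy at $\{Dv=0\}$: for $-1<\beta<0$ the weights $|Dv|^{2\beta}$ and $|Dv|^{\beta-1}$ in \eqref{xe3.1} are singular there, and this is handled by rewriting every term through the bounded field $|Dv|^\beta Dv$ --- e.g.\ $D|Dv|^{\beta+1}\cdot Dv=|Dv|^{-1}\big(D[|Dv|^\beta Dv]\,Dv\big)\cdot Dv$ --- and using that the gradient of $|Dv|^\beta Dv$ vanishes a.e.\ on $\{Dv=0\}$.
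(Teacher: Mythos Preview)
Your proposal is correct and follows essentially the same route as the paper's proof: construct smooth approximations $v^\varepsilon$ as solutions to the regularized inhomogeneous $(\beta+2)$-Laplace equation with mollified right-hand side, invoke the Cianchi--Maz'ya second-order estimate to get a uniform $W^{1,2}_\loc$ bound on $W^\varepsilon=(|Dv^\varepsilon|^2+\varepsilon)^{\beta/2}Dv^\varepsilon$, apply the smooth identity of Lemma~\ref{lem3.2} to $v^\varepsilon$, and pass to the limit term by term on the right while using the null-Lagrangian (strong $\times$ weak in $L^2$) structure of the Jacobian on the left. The paper packages the approximation and convergence properties as Lemma~\ref{lem3.5} and the div--curl passage as Lemmas~\ref{lem3.3}--\ref{lem3.4}, and localizes on balls $B$ with $4B\Subset\Omega$ (so that the mollification of $g$ and the boundary data make sense on $2B$), but the argument is the one you describe.
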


Recall that, in the case when   $\beta=0$, Lemma \ref{lem3.1} follows from \eqref{xe2.2} and a standard approximation via smooth functions.
Indeed, denote by $\{\eta_\ez\}_{\ez\in(0,1]}$ the standard smooth mollifier:
\begin{align}\label{moll}\mbox{
$\eta_\ez(x)=\ez^{-2}\eta(\ez^{-1}x)$, where  $\eta\in C^\fz(B(0,1))$ satisfying  $\eta\ge0$ and $\int_{B(0,1)}\eta\,dx=1$.}
\end{align}
Given any $v\in W^{2,2}_\loc(\Omega)$,  we know that \eqref{xe2.2} holds for $v\ast\eta_\ez$.
As $\ez\to0$, since $-\det D^2 (v\ast\eta_\ez)=-\det \big[(D^2v)\ast\eta_\ez\big]\to -\det D^2v$ in $L^1_\loc(\Omega)$,
$|D(v\ast\eta_\ez)|^2\to |Dv|^2$ in $L^1_\loc(\Omega)$ and $D(v\ast\eta_\ez) \otimes D (v\ast\eta_\ez)\to Dv\otimes Dv$ in $L^1_\loc(\Omega)$, we know that \eqref{xe2.2} holds for such $v$.

Below we prove  the  case $\beta\ne 0$ in Lemma \ref{lem3.1}.
We need several lemmas.
For any $v\in C^\fz(\Omega)$, in the case when $\bz\ge 0$, one has $|Dv|^\bz Dv\in W^{1,2}_\loc(\Omega)$,
 but when $-1<\bz<0$,   we do not necessarily have   $|Dv|^\bz Dv\in W^{1,2}_\loc(\Omega)$. Indeed,
 if $w(x)= x_1^2$ in $\rr^2$, then
 a direct calculation leads to
$$
|D[|Dw|^\bz Dw]|^2=  4(\bz+1)^2|x_1|^{2\bz} \quad\mbox{ when $x_1\ne 0$, }
$$
 which  does not belong to $ L^1_\loc(\rr^2)$  when $\bz\le -1/2$.
For this reason when $\bz<0$ we consider $-\det D[(|Dv|^2+\ez)^{{\bz/2}}Dv]$ with $\ez>0$.
We have the following result, whose proof is postponed to Section \ref{sec3.1}.

\begin{lem}\label{lem3.2}
Let $v\in C^\fz(\Omega)$. Given any $\bz\ge0$ and $\ez\ge 0$, or given any $\bz\in(-1,0)$ and $\ez>0$, one has
\begin{align}\label{xe3.2}
&\int_\Omega-\det D\big[(|Dv|^2+\ez)^{{\bz/2}}Dv\big] \psi \,dx\\
&\quad= -\frac12\int_\Omega(|Dv|^2+\ez)^{\bz } (D^2\psi Dv\cdot Dv) \,dx
+\frac{1}{2\beta+2}\int_\Omega(|Dv|^2+\ez)^{\beta+1} \bdz\psi\,dx\nonumber\\
&\quad\quad-\frac{\beta}{\beta+1}\int_\Omega(|Dv|^2+\ez)^{\frac{\beta-1}{2}}
\big[D(|Dv|^2+\ez)^{\frac{\bz+1}{2} }\cdot D v\big] (Dv\cdot D\psi) \,dx \quad\forall \psi\in C^\fz_c(\Omega) \nonumber.
\end{align}
\end{lem}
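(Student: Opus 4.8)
The plan is to exploit that, in the plane, the Jacobian determinant of a $C^1$ map is a null Lagrangian, substitute $W:=(|Dv|^2+\ez)^{\bz/2}Dv$, use a crucial cancellation, and then rearrange by a handful of integrations by parts until the three terms on the right-hand side of \eqref{xe3.2} emerge. First I would reduce to the case $\ez>0$. When $\ez>0$ and $v\in C^\fz(\Omega)$, the weight $a:=(|Dv|^2+\ez)^{\bz/2}$ is a smooth \emph{positive} function of $x$, so $W=a\,Dv\in C^\fz(\Omega;\rr^2)$ and $\det DW$ is classically defined, which removes any issue near critical points of $v$. The remaining case $\bz\ge0$, $\ez=0$ then follows by letting $\ez\downarrow0$: on any compact $K\subset\Omega$ one has the $\ez$-uniform bound $|DW^\ez|\le C(v,K,\bz)(1+|D^2v|)$ (using $(t+\ez)^{-1}t\le1$), and each integrand on the right-hand side of \eqref{xe3.2} is dominated on $K$ by a fixed $L^1$ function (using also $|\Delta_\fz v|\le|Dv|^2|D^2v|$), so the identity passes to the limit by dominated convergence.

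With $\ez>0$ fixed, I would start from the divergence (null-Lagrangian) form of the planar Jacobian, $\det DF=\tfrac12\,{\rm div}(F^1\nabla^\perp F^2-F^2\nabla^\perp F^1)$ with $\nabla^\perp g:=(g_{x_2},-g_{x_1})$, which is the smooth-map generalization of Lemma \ref{lem2.1}. Testing $-\det DW$ against $\psi\in C_c^\fz(\Omega)$ and integrating by parts once gives
$$\int_\Omega-\det DW\,\psi\,dx=\tfrac12\int_\Omega W^1\big[W^2_{x_2}\psi_{x_1}-W^2_{x_1}\psi_{x_2}\big]\,dx-\tfrac12\int_\Omega W^2\big[W^1_{x_2}\psi_{x_1}-W^1_{x_1}\psi_{x_2}\big]\,dx.$$
I would then substitute $W^i=a\,v_{x_i}$ and expand $W^i_{x_j}=a_{x_j}v_{x_i}+a\,v_{x_ix_j}$. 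The key point is that the two contributions carrying a derivative of the weight, namely $\pm\tfrac12\,a\,v_{x_1}v_{x_2}(a_{x_2}\psi_{x_1}-a_{x_1}\psi_{x_2})$, cancel exactly; applying the elementary identities $v_{x_1}v_{x_2x_2}-v_{x_2}v_{x_1x_2}=v_{x_1}\Delta v-\partial_{x_1}(\tfrac12|Dv|^2)$ and $v_{x_2}v_{x_1x_1}-v_{x_1}v_{x_1x_2}=v_{x_2}\Delta v-\partial_{x_2}(\tfrac12|Dv|^2)$ to what remains leaves the compact intermediate identity
$$\int_\Omega-\det DW\,\psi\,dx=\tfrac12\int_\Omega a^2\,\Delta v\,(Dv\cdot D\psi)\,dx-\tfrac14\int_\Omega a^2\,(D|Dv|^2\cdot D\psi)\,dx,$$
which for $\bz=0$ is precisely \eqref{xe2.2}.

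Finally I would turn this intermediate identity into the three terms of \eqref{xe3.2}. Writing $a^2\Delta v={\rm div}(a^2Dv)-D(a^2)\cdot Dv$, the resulting term $-\tfrac12\int[D(a^2)\cdot Dv](Dv\cdot D\psi)$ is already the last term of \eqref{xe3.2}, because $D(a^2)\cdot Dv=2\bz(|Dv|^2+\ez)^{\bz-1}\Delta_\fz v$ while $D[(|Dv|^2+\ez)^{(\bz+1)/2}]\cdot Dv=(\bz+1)(|Dv|^2+\ez)^{(\bz-1)/2}\Delta_\fz v$. For the term $\tfrac12\int{\rm div}(a^2Dv)(Dv\cdot D\psi)$, one further integration by parts produces $-\tfrac12\int a^2(D^2\psi\,Dv\cdot Dv)-\tfrac12\int a^2(D^2vDv)\cdot D\psi$; the first of these is exactly the first term of \eqref{xe3.2}, and combining the second with the leftover $-\tfrac14\int a^2(D|Dv|^2\cdot D\psi)$ via $D^2vDv=\tfrac12D|Dv|^2$ and $a^2D(|Dv|^2+\ez)=\tfrac1{\bz+1}D[(|Dv|^2+\ez)^{\bz+1}]$, then integrating by parts once more, yields $\tfrac1{2\bz+2}\int(|Dv|^2+\ez)^{\bz+1}\bdz\psi$. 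Collecting the three pieces gives \eqref{xe3.2}. I expect the main obstacle to be purely organizational: at each stage one must choose the integration by parts that converts a second derivative of $v$ into a power of $|Dv|^2+\ez$ rather than into an uncontrolled third-order term, and it is precisely the exact cancellation of the $a_{x_j}$-cross terms that makes this possible — this is the one place where the specific structure $W=(|Dv|^2+\ez)^{\bz/2}Dv$, rather than an arbitrary scalar weight times $Dv$, is genuinely used.
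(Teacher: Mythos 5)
Your proof is correct, and it arrives at the same intermediate integral identity
\begin{align*}
\int_\Omega-\det D\big[(|Dv|^2+\ez)^{\bz/2}Dv\big]\psi\,dx
=\tfrac12\int_\Omega(|Dv|^2+\ez)^\bz\big[\Delta v\,Dv\cdot D\psi-D^2vDv\cdot D\psi\big]\,dx
\end{align*}
as the paper, after which the final integration-by-parts steps coincide with the paper's; but you reach this intermediate identity by a genuinely different route. The paper first proves a \emph{pointwise} structural identity for $-\det D\big[(|Dv|^2+\ez)^{\bz/2}Dv\big]$ (Lemma~\ref{lem3.7}, obtained by brute-force expansion of the $2\times2$ determinant), then combines it with the divergence form of $-\det D^2v$ from Lemma~\ref{lem2.1} and one integration by parts. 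You instead work entirely at the distributional level: starting from the null-Lagrangian (divergence) form of the planar Jacobian of the vector field $W=a\,Dv$ with $a=(|Dv|^2+\ez)^{\bz/2}$ — in effect a symmetrized version of the paper's Lemma~\ref{lem3.3} — you expand $W^i_{x_j}=a_{x_j}v_{x_i}+a\,v_{x_ix_j}$ and exploit the exact cancellation of the two $a_{x_j}$-cross terms (a consequence of the specific form $W=a\,Dv$ with scalar $a$), after which elementary two-dimensional identities give the intermediate formula directly, with no need for Lemma~\ref{lem3.7}. Your approach is therefore more economical for proving \eqref{xe3.2} on its own; the trade-off is that the paper's pointwise Lemma~\ref{lem3.7} is reused elsewhere (e.g.\ in the lower-bound arguments of Theorems~\ref{thm1.3} and~\ref{thm1.4}), so the paper's route amortizes better across the whole article. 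Your preliminary reduction to $\ez>0$ followed by dominated convergence for the $\bz\ge0,\ \ez=0$ case is also clean and addresses the degeneracy at $\{Dv=0\}$ slightly more explicitly than the paper's remark at the end of Lemma~\ref{lem3.7}'s proof.
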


We also have the following two divergence structural formulae.
\begin{lem}\label{lem3.3}
Let $v\in C^\fz(\Omega)$. For any $\ez>0$ and $\bz>-1$, one has for any $\psi\in C^\fz_c(\Omega )$,
\begin{align*}
 \int_\Omega -\det D\big[( |Dv |^2+\ez) ^{{\bz/2}}Dv\big] \psi\,dx
&= \int_\Omega\Big\{
\big[ (|Dv  |^2+\ez) ^{{\bz /2}}v  _{x_2}\big]_{x_2}(|Dv  | ^2+\ez)^{{\bz/2 }}v _{x_1}\psi_{x_1} \\
&\quad\quad -
\big[ (|Dv  |^2+\ez) ^{{\bz/2 }}v  _{x_2}\big]_{x_1}
(|Dv  |^2+\ez) ^{{\bz/2 }}v  _{x_1}\psi_{x_2}\Big\} \,dx.
\end{align*}
\end{lem}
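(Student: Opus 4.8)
The plan is to establish Lemma \ref{lem3.3} by the same elementary computation that underlies Lemma \ref{lem2.1}, but carried out for the vector field $W := (|Dv|^2+\ez)^{\bz/2} Dv$ in place of $Dv$. Write $W = (W^1, W^2)$ with $W^i = (|Dv|^2+\ez)^{\bz/2} v_{x_i}$, which is smooth on $\Omega$ since $|Dv|^2+\ez$ is bounded below by $\ez>0$ and $v\in C^\fz$. The Jacobian determinant is then the pointwise classical expression
\begin{align*}
\det DW = W^1_{x_1} W^2_{x_2} - W^1_{x_2} W^2_{x_1}.
\end{align*}
The first step is to observe that this "$2\times 2$ determinant of a gradient-like map" has the null-Lagrangian divergence structure: for any smooth map $W$,
\begin{align*}
\det DW = (W^1 W^2_{x_2})_{x_1} - (W^1 W^2_{x_1})_{x_2},
\end{align*}
because expanding the two terms the cross terms $W^1 W^2_{x_1 x_2}$ cancel. (Here I only use that $W^2\in C^2$; I do not need $W$ to be an actual gradient.) Hence $-\det DW = (W^1 W^2_{x_1})_{x_2} - (W^1 W^2_{x_2})_{x_1}$, and multiplying by $\psi\in C_c^\fz(\Omega)$ and integrating by parts once moves the outer derivative onto $\psi$:
\begin{align*}
\int_\Omega -\det DW\,\psi\,dx = \int_\Omega \big[ W^1 W^2_{x_2}\,\psi_{x_1} - W^1 W^2_{x_1}\,\psi_{x_2} \big]\,dx.
\end{align*}

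The second step is purely notational: substitute back $W^1 = (|Dv|^2+\ez)^{\bz/2} v_{x_1}$ and $W^2_{x_j} = \big[(|Dv|^2+\ez)^{\bz/2} v_{x_2}\big]_{x_j}$ into the last display. This yields exactly
\begin{align*}
\int_\Omega -\det D\big[(|Dv|^2+\ez)^{\bz/2}Dv\big]\psi\,dx
&= \int_\Omega \Big\{ \big[(|Dv|^2+\ez)^{\bz/2} v_{x_2}\big]_{x_2}\,(|Dv|^2+\ez)^{\bz/2} v_{x_1}\,\psi_{x_1} \\
&\qquad\quad - \big[(|Dv|^2+\ez)^{\bz/2} v_{x_2}\big]_{x_1}\,(|Dv|^2+\ez)^{\bz/2} v_{x_1}\,\psi_{x_2} \Big\}\,dx,
\end{align*}
which is the claimed identity. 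Since every factor is smooth and $\psi$ has compact support, all integrations by parts are justified with no boundary terms, and there are no convergence or regularity issues to address — this is the whole point of inserting the regularizing parameter $\ez>0$ before taking $\bz<0$.

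There is essentially no serious obstacle here: the lemma is a clean reorganization of the classical fact that the Jacobian determinant of a planar smooth map is a divergence, specialized to the particular map $W_\ez = (|Dv|^2+\ez)^{\bz/2}Dv$. The only point requiring a little care is bookkeeping of which index sits where in the two terms (i.e.\ keeping the antisymmetry $x_1\leftrightarrow x_2$ correct so that the sign and the distribution of the $\psi$-derivatives match the statement); I would double-check this by verifying consistency with Lemma \ref{lem2.1} in the case $\bz=0$, $\ez=0$, where $W^1 = v_{x_1}$, $W^2 = v_{x_2}$ and the right-hand side reduces to $\int_\Omega [v_{x_2 x_2} v_{x_1}\psi_{x_1} - v_{x_2 x_1} v_{x_1}\psi_{x_2}]\,dx$, which after one more integration by parts must agree with $\tfrac12\int_\Omega[|Dv|^2\Delta\psi - D^2\psi Dv\cdot Dv]\,dx$. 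That consistency check, together with the null-Lagrangian identity and a single integration by parts, completes the proof.
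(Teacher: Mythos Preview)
Your proof is correct and follows exactly the same route as the paper: write $F=(|Dv|^2+\ez)^{\bz/2}Dv$, expand $-\det DF$ pointwise, and integrate by parts once to move one derivative off $F_1$ onto $\psi$. The paper's version is slightly terser (it skips writing the divergence form explicitly), but the content is identical.
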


\begin{proof}
Write $F=(|Dv|^2+\ez)^{{\bz/2}}Dv$. By integration by parts, one has
\begin{align*}\int_\Omega-\det(DF )\psi\,dx
 &= -\int_\Omega  \left[( F _1)_{x_1}
(F _2)_{x_2}  -  (F _1)_{x_2}
(F _2)_{x_1}\right]\psi \,dx\\
&=\int_\Omega  \left[ F _1
(F _2)_{x_2}\psi_{x_1}\,dx -
F _1
( F _2)_{x_1}\psi_{x_2}\right]\,dx \end{align*}
as desired.
\end{proof}

\begin{lem}\label{lem3.4}
Let $v\in W^{1,1}_\loc(\Omega)$ satisfy $ |Dv|^\bz Dv\in W^{1,2}_\loc(\Omega)$ for some $ \bz>-1$. One has
\begin{align*}
&\int_\Omega -\det D\big[ |Dv| ^{{\bz}}Dv\big] \psi\,dx\\
& \quad= \int_\Omega\big[
( |Dv | ^{{\bz }}v _{x_2})_{x_2}|Dv | ^{{\bz }}v _{x_1}\psi_{x_1}   -
( |Dv | ^{{\bz }}v _{x_2})_{x_1}|Dv | ^{{\bz }}v _{x_1}\psi_{x_2}\big] \,dx
\quad\forall  \psi\in C^\fz_c(\Omega ).
\end{align*}
\end{lem}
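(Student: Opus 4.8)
The plan is to reduce Lemma \ref{lem3.4} (the case of finite exponent with the genuine $|Dv|^\bz$, not the regularized $(|Dv|^2+\ez)^{\bz/2}$) to Lemma \ref{lem3.3} by a double approximation. First I would fix $\psi\in C^\fz_c(\Omega)$, a subdomain $U\Subset\Omega$ containing $\mathrm{supp}\,\psi$, and mollify: set $v_\ez=v\ast\eta_\ez\in C^\fz(U)$ for $\ez$ small, with $\eta_\ez$ the standard mollifier from \eqref{moll}. For each $\ez>0$ and each $\dz>0$ Lemma \ref{lem3.3} applies to $v_\ez$ with regularization parameter $\dz$, giving
\begin{align*}
\int_\Omega -\det D\big[(|Dv_\ez|^2+\dz)^{\bz/2}Dv_\ez\big]\psi\,dx
&=\int_\Omega\Big\{\big[(|Dv_\ez|^2+\dz)^{\bz/2}(v_\ez)_{x_2}\big]_{x_2}(|Dv_\ez|^2+\dz)^{\bz/2}(v_\ez)_{x_1}\psi_{x_1}\\
&\quad-\big[(|Dv_\ez|^2+\dz)^{\bz/2}(v_\ez)_{x_2}\big]_{x_1}(|Dv_\ez|^2+\dz)^{\bz/2}(v_\ez)_{x_1}\psi_{x_2}\Big\}\,dx.
\end{align*}
Then I would send $\dz\to0$ with $\ez$ fixed: since $v_\ez$ is smooth, $(|Dv_\ez|^2+\dz)^{\bz/2}Dv_\ez\to|Dv_\ez|^\bz Dv_\ez$ and the corresponding derivatives converge in $L^2(U)$ (the only subtlety, when $-1<\bz<0$, is control near the critical set $\{Dv_\ez=0\}$, which is handled exactly as in the proof of Lemma \ref{lem3.2}/Lemma \ref{lem3.3} — in fact for smooth $v_\ez$ one can alternatively note that both sides are continuous in $\dz\ge0$ by dominated convergence once one observes the integrand is bounded using $|D[(|Dv_\ez|^2+\dz)^{\bz/2}Dv_\ez]|\le C_\ez(|Dv_\ez|^2+\dz)^{(\bz-1)/2}|Dv_\ez|\cdot|D^2v_\ez|$ and $\bz>-1$). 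This yields Lemma \ref{lem3.4} for the smooth function $v_\ez$, i.e. the claimed identity with $v$ replaced by $v_\ez$ and without the $\dz$.

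Next I would let $\ez\to0$. Here the hypothesis $v\in W^{1,1}_\loc$ and $|Dv|^\bz Dv\in W^{1,2}_\loc(\Omega)$ enters decisively: one has $Dv_\ez\to Dv$ a.e.\ and in $L^1_\loc$, hence $|Dv_\ez|^\bz Dv_\ez\to|Dv|^\bz Dv$ a.e., and I claim $|Dv_\ez|^\bz Dv_\ez\to|Dv|^\bz Dv$ in $W^{1,2}_\loc(U)$. This is the technical heart of the argument. It does \emph{not} follow from the naive estimate $|D(|Dv_\ez|^\bz Dv_\ez)|\le C|Dv_\ez|^\bz|D^2v_\ez|$ because $D^2v_\ez$ need not be controlled; instead one should observe that the map $w\mapsto |Dw|^\bz Dw$, while nonlinear, satisfies a \emph{chain-rule commutation with mollification in the limit}: writing $G(\xi)=|\xi|^\bz\xi$ (which is $C^1$ on $\rr^2\setminus\{0\}$, with $DG(\xi)$ of size $\sim|\xi|^\bz$ and a genuine pointwise chain rule available because $|Dv|^\bz Dv\in W^{1,2}$ already guarantees $Dv$ is ``differentiable in the $G$-sense''), one has $D(|Dv_\ez|^\bz Dv_\ez)=\big(D(|Dv|^\bz Dv)\big)\ast\eta_\ez + (\text{error}_\ez)$ where $\text{error}_\ez\to0$ in $L^2_\loc$. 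The cleanest way to see the error vanishes is to test against smooth compactly supported matrix fields and use that $|Dv|^\bz Dv\in W^{1,2}$ means precisely that the distributional derivative of $|Dv|^\bz Dv$ is an $L^2$ function, so mollification converges to it in $L^2_\loc$; one then matches $D(v_\ez^{G}):=D(|Dv_\ez|^\bz Dv_\ez)$ with $\big(D(|Dv|^\bz Dv)\big)\ast\eta_\ez$ up to a term that is small because $G$ is uniformly continuous on compact subsets avoiding the origin and the measure of $\{|Dv|\le t\}$ where things are uncontrolled can be absorbed using $\bz>-1$. (Alternatively, and more robustly, one can avoid the commutation lemma entirely by an intermediate approximation through $C^\fz$ functions whose gradients converge to $Dv$ in the Orlicz/Sobolev norm adapted to $G$, exactly the space in which the Cianchi--Mazya second order estimate \cite{cm} lives — this is presumably how the authors set it up, matching the bullet points after Definition \ref{def1.2}.)

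Once $|Dv_\ez|^\bz Dv_\ez\to|Dv|^\bz Dv$ in $W^{1,2}_\loc$, every term in the $v_\ez$-identity passes to the limit: the left side converges because $-\det$ is continuous (indeed bilinear in the first derivatives, hence $L^1_\loc$-continuous under $W^{1,2}_\loc$ convergence of $|Dv_\ez|^\bz Dv_\ez$), and on the right side each factor $(|Dv_\ez|^\bz v_{\ez,x_j})_{x_i}$ converges in $L^2_\loc$ while the undifferentiated factors $|Dv_\ez|^\bz v_{\ez,x_1}$ converge in $L^2_\loc$ (from the $W^{1,2}$ convergence, since $L^2$-convergence of the function is part of it) and $\psi_{x_i}\in C^\fz_c$; a product of an $L^2_\loc$-convergent sequence with another $L^2_\loc$-convergent sequence converges in $L^1_\loc$, so the integrals converge. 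This establishes Lemma \ref{lem3.4} for general $v$. I expect the main obstacle to be precisely the $W^{1,2}_\loc$ convergence $|Dv_\ez|^\bz Dv_\ez\to|Dv|^\bz Dv$ under mollification when $\bz<0$, where the nonlinearity $G$ is only Hölder (not Lipschitz) near $0$ and one cannot differentiate through it cavalierly; controlling the contribution of the critical set $\{Dv=0\}$ — using that $\bz>-1$ keeps $(|Dv|^2+\dz)^{(\bz-1)/2}|Dv|^2$ integrable in $\dz$ uniformly — is the delicate point, and is exactly where the structural identities of Lemmas \ref{lem3.7} and \ref{lem3.2} and the Cianchi--Mazya estimate are invoked.
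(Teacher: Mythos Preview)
Your approach takes a long detour and contains a real gap, while the paper's proof is essentially a one-liner. The paper does \emph{not} mollify $v$; it mollifies the vector field $F=|Dv|^\bz Dv$ itself, setting $F^\ez=(|Dv|^\bz Dv)\ast\eta_\ez$. Since the hypothesis is precisely that $F\in W^{1,2}_\loc(\Omega)$, standard mollifier properties give $F^\ez\to F$ in $W^{1,2}_\loc$ immediately. For the smooth map $F^\ez$ the identity is just the elementary divergence structure of the Jacobian (integration by parts using $(F^\ez_2)_{x_1x_2}=(F^\ez_2)_{x_2x_1}$), and passing to the limit is trivial because every term is a product of two factors each converging in $L^2_\loc$. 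No $\dz$-regularization, no Lemma \ref{lem3.3}, no Cianchi--Maz'ya, no control of the critical set is needed.

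By contrast, your route mollifies $v$ and then has to show $|Dv_\ez|^\bz Dv_\ez\to|Dv|^\bz Dv$ in $W^{1,2}_\loc$. This is exactly the step you flag as the ``technical heart'' and ``main obstacle,'' and you do not actually prove it: the commutator-type claim $D(|Dv_\ez|^\bz Dv_\ez)=\big(D(|Dv|^\bz Dv)\big)\ast\eta_\ez+\text{error}_\ez$ with $\text{error}_\ez\to0$ in $L^2_\loc$ is asserted but not justified, and there is no reason to expect it to hold in general since the nonlinearity $G(\xi)=|\xi|^\bz\xi$ does not commute with convolution and the hypothesis gives no control on $D^2v$ itself. (Indeed for $-1<\bz<0$ even the intermediate object $|Dv_\ez|^\bz Dv_\ez$ need not lie in $W^{1,2}_\loc$, as the paper's example $w=x_1^2$ shows, so your $\dz\to0$ step is already problematic.) You appear to have conflated Lemma \ref{lem3.4} with Lemma \ref{lem3.1}: the Cianchi--Maz'ya estimate and Lemma \ref{lem3.5} are used in the paper to prove Lemma \ref{lem3.1}, where an approximation by solutions of auxiliary $(\bz+2)$-Laplace problems (not by mollifications of $v$) is constructed precisely to make such convergence available. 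Lemma \ref{lem3.4} itself requires none of this machinery.
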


\begin{proof}
For $\ez>0$, let $F^\ez=(F^\ez_1,  F^\ez_2)=(|Dv|^\bz Dv)\ast\eta_\ez
 \in C^\fz_\loc(\Omega)$, where $\eta_\ez$ is the standard smooth mollifier as in \eqref{moll}.
By $|Dv|^\beta Dv\in W^{1,2}_\loc(\Omega)$, we know
that $F^\ez \to |Dv|^\bz Dv$ in $W^{1,2}_\loc(\Omega)$ as $\ez \to 0$.  By this,
 integration by parts and
$(F^\ez_2)_{x_1x_2}=(F^\ez_2)_{x_2x_1}$,
 we have
\begin{align}
\int_\Omega-\det\big(D\big[|Dv|^\beta Dv\big]\big)\psi\,dx&=
\lim_{\ez\to 0}\int_\Omega-\det(DF^\ez)\psi\,dx\nonumber\\
&=-\lim_{\ez\to0}\int_\Omega  \big[( F^\ez_1)_{x_1}
(F^\ez_2)_{x_2}  -  (F^\ez_1)_{x_2}
(F^\ez_2)_{x_1}\big]\psi \,dx \nonumber\\
&=\lim_{\ez\to0}\int_\Omega  \big[ F^\ez_1
(F^\ez_2)_{x_2}\psi_{x_1}\,dx -
F^\ez_1
( F^\ez_2)_{x_1}\psi_{x_2}\big]\,dx \nonumber\\
&=\int_\Omega  \big[( |Dv | ^{{\bz }}v _{x_1})
( |Dv | ^{{\bz }}v _{x_2})_{x_2}\psi_{x_1}\,dx -  ( |Dv | ^{{\bz }}v _{x_1} )
( |Dv | ^{{\bz }}v _{x_2})_{x_1}\psi_{x_2}\big]\,dx\nonumber
\end{align}
for any $\psi\in C^\fz_c(\Omega )$. Hence we complete this proof.
\end{proof}

Moreover, we need the following approximation result.
Given any $0\ne \bz>-1$, let $v\in W^{1,1}_\loc(\Omega)$ satisfy $ |Dv|^\bz Dv\in W^{1,2}_\loc(\Omega)$.
Write  $g:={\rm div}(|Dv|^{\bz}Dv)  \in L^2_\loc (\Omega).$
Given  any $B=B(x_0,r)\Subset 4B\Subset\Omega$, one has   $g  \in L^2(4B ).$
For  any $\ez\in(0,r]$, set
$$g^{\ez}(x):=g\ast \eta_{\ez} (x)\quad \forall x\in 3B,$$
where $\{\eta_\ez\}_{\ez\in(0,1]}$ is the standard smooth mollifier as in \eqref{moll}.
Note that $g^\ez\in C^\fz(3B )$, $g^{\ez}\in L^2(3 B )$ uniformly in $\ez\in(0,r]$, and
$g^{\ez}\to g$ in $L^2(
3 B )$ as $\ez\to0$.
Since
$|Dv|^{\bz}Dv\in W^{1,2}(4B )$, by the Sobolev embedding theorem,
we have $|Dv|^{\bz}Dv\in L^q(4B )$,  and hence  $v\in W^{1,q}(4B )$,  for any $1<q<\fz$.
Consider the Dirichlet problem for the inhomogeneous $(2+\beta)$-Laplace  equation
\begin{align}\label{xe3.3}
{\rm div}\big((|Dw|^2+\ez)^{\frac{\bz}{2}}Dw\big)
=g^{\ez}\ {\rm in}\ 2B ;\ w=v\ {\rm on}\ \partial (2B).
\end{align}
There exists a unique smooth solution $v^{\ez}\in C^\fz(2B )\cap W^{1,2+\beta}(2B )\cap C^0(\overline {2B})$  to   \eqref{xe3.3}.
The following convergence result  plays a key role in the proof of Lemma \ref{lem3.1}.  Its proof  is postponed to  Section \ref{sec3.2}.

\begin{lem}\label{lem3.5} We have
\begin{itemize}
\item[$(i)$]   $Dv^\ez\to Dv$ in $L^{2+\bz}(2B )$ as $\ez\to 0$.
\item[$(ii)$]
$\big[|Dv^\ez|^2+\ez\big]^{\frac{\bz}2}Dv^\ez
\in W^{1,2}(2B )$ uniformly in $\ez\in(0,r]$;
$\big[|Dv^\ez|^2+\ez\big]^{\frac{\bz}2}Dv^\ez \to
|Dv|^{\bz}Dv$ in $L^q(B)$
for any $q\in(1,\fz)$ and weakly in $ W^{1,2}(B )$ as $\ez\to0$.
\item[$(iii)$]  $\big[|Dv^\ez|^2+\ez\big]^{\frac{\bz+1}2}\in W^{1,2}(B )$ uniformly in $\ez\in(0,r]$;
    $\big[|Dv^\ez|^2+\ez\big]^{\frac{\bz+1}2}\to
|Dv|^{\bz+1}$ in $L^q(B)$ for any $q\in(1,\fz)$ and weakly in $W^{1,2}(B )$  as $\ez\to0$.
\end{itemize}
\end{lem}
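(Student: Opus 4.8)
\emph{Overview and Step 1.} The plan is to argue in three stages. First, since $|Dv|^{\bz}Dv\in W^{1,2}(4B)$ the planar Sobolev embedding gives $|Dv|\in L^q(4B)$, hence $v\in W^{1,q}(4B)$, for every $q<\fz$; in particular $v\in W^{1,2+\bz}(2B)$, so $v^\ez-v\in W^{1,2+\bz}_0(2B)$, and by density the identities
$$\int_{2B}|Dv|^{\bz}Dv\cdot D\varphi\,dx=-\int_{2B}g\varphi\,dx,\qquad \int_{2B}(|Dv^\ez|^2+\ez)^{\bz/2}Dv^\ez\cdot D\varphi\,dx=-\int_{2B}g^\ez\varphi\,dx$$
hold for all $\varphi\in W^{1,2+\bz}_0(2B)$. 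Testing the second with $\varphi=v^\ez-v$, using the Sobolev--Poincar\'e inequality (note $W^{1,2+\bz}_0(2B)\hookrightarrow L^2(2B)$ in the plane whenever $\bz>-1$), the bound $\|g^\ez\|_{L^2(2B)}\le\|g\|_{L^2(4B)}$, the fact that $\ez\le r$, and Young's inequality, I would obtain the first-order bound $\|Dv^\ez\|_{L^{2+\bz}(2B)}\le C$ with $C$ independent of $\ez\in(0,r]$.

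\emph{Step 2 (part $(i)$).} Subtracting the two displayed identities, both tested with $v^\ez-v$, yields
$$\int_{2B}\big[(|Dv^\ez|^2+\ez)^{\bz/2}Dv^\ez-|Dv|^{\bz}Dv\big]\cdot(Dv^\ez-Dv)\,dx=-\int_{2B}(g^\ez-g)(v^\ez-v)\,dx,$$
whose right-hand side tends to $0$ since $g^\ez\to g$ in $L^2(2B)$ while $\{v^\ez-v\}$ stays bounded in $L^2(2B)$ by Step 1. I would split the left integrand as
$$\big[(|Dv^\ez|^2+\ez)^{\bz/2}Dv^\ez-(|Dv|^2+\ez)^{\bz/2}Dv\big]+\big[(|Dv|^2+\ez)^{\bz/2}-|Dv|^{\bz}\big]Dv;$$
dotted with $Dv^\ez-Dv$, the second bracket contributes $o(1)$ by dominated convergence (its $L^{(2+\bz)/(1+\bz)}(2B)$-norm vanishes, while $\|Dv^\ez-Dv\|_{L^{2+\bz}(2B)}$ stays bounded), and the first bracket, dotted with $Dv^\ez-Dv$, is $\gs_\bz(|Dv^\ez|^2+|Dv|^2+\ez)^{\bz/2}|Dv^\ez-Dv|^2$ by the standard monotonicity inequality for $\xi\mapsto(|\xi|^2+\ez)^{\bz/2}\xi$. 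Hence $\int_{2B}(|Dv^\ez|^2+|Dv|^2+\ez)^{\bz/2}|Dv^\ez-Dv|^2\,dx\to0$. For $\bz\ge0$ this already bounds $\|Dv^\ez-Dv\|_{L^{2+\bz}(2B)}^{2+\bz}$; for $-1<\bz<0$ a H\"older splitting with exponents $\tfrac{2}{2+\bz}$ and $\tfrac{2}{-\bz}$, combined with the bound of Step 1, gives the same. This proves $(i)$, and passing to a subsequence I may also assume $Dv^\ez\to Dv$ a.e.\ in $2B$.

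\emph{Step 3 (parts $(ii)$ and $(iii)$).} Equation \eqref{xe3.3} is the nondegenerate inhomogeneous $(2+\bz)$-Laplace equation with right-hand side $g^\ez$ bounded in $L^2(2B)$; applying the second-order estimate of Cianchi--Mazya \cite{cm} on $B\Subset 2B$ I would get
$$\int_B(|Dv^\ez|^2+\ez)^{\bz}|D^2v^\ez|^2\,dx\le C,$$
with $C$ depending only on $\bz$, $B$, $\|g\|_{L^2(4B)}$ and $\|Dv^\ez\|_{L^{2+\bz}(2B)}$, hence uniform in $\ez\in(0,r]$ by Step 1. Since
$$\big|D\big[(|Dv^\ez|^2+\ez)^{\bz/2}Dv^\ez\big]\big|\ls_\bz(|Dv^\ez|^2+\ez)^{\bz/2}|D^2v^\ez|,\qquad \big|D\big[(|Dv^\ez|^2+\ez)^{\frac{\bz+1}{2}}\big]\big|\ls_\bz(|Dv^\ez|^2+\ez)^{\bz/2}|D^2v^\ez|$$
(the latter because $(|Dv^\ez|^2+\ez)^{\frac{\bz-1}{2}}|Dv^\ez|\le(|Dv^\ez|^2+\ez)^{\bz/2}$), both $(|Dv^\ez|^2+\ez)^{\bz/2}Dv^\ez$ and $(|Dv^\ez|^2+\ez)^{(\bz+1)/2}$ are bounded in $W^{1,2}(B)$ uniformly in $\ez$, using once more Step 1 for their zeroth-order parts. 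By Step 2, along a subsequence $Dv^\ez\to Dv$ a.e., so $(|Dv^\ez|^2+\ez)^{\bz/2}Dv^\ez\to|Dv|^{\bz}Dv$ and $(|Dv^\ez|^2+\ez)^{(\bz+1)/2}\to|Dv|^{\bz+1}$ a.e.\ (the limits being $0$ wherever $Dv=0$, legitimately since $\bz+1>0$). The planar embedding $W^{1,2}(B)\hookrightarrow L^q(B)$ for all $q<\fz$ makes these sequences uniformly integrable in every $L^q(B)$, so the a.e.\ convergence upgrades to strong convergence in $L^q(B)$ for each $q\in(1,\fz)$; and since the limits $|Dv|^{\bz}Dv$ and $|Dv|^{\bz+1}=\big|\,|Dv|^{\bz}Dv\,\big|$ lie in $W^{1,2}_\loc(\Omega)$ (the latter because $|\cdot|$ is Lipschitz), boundedness in $W^{1,2}(B)$ together with uniqueness of limits yields the weak $W^{1,2}(B)$-convergence along the full family $\ez\to0$. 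This proves $(ii)$ and $(iii)$.

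\emph{Main obstacle.} The essential difficulty is the $\ez$-uniform second-order bound in Step 3: the naive Caccioppoli estimate for $(2+\bz)$-harmonic-type functions with only $L^2$ right-hand side degenerates as $\ez\to0$ when $\bz>0$, and it is exactly the Cianchi--Mazya theory that supplies the weighted inequality $\int(|Dv^\ez|^2+\ez)^{\bz}|D^2v^\ez|^2\,dx\ls\|g^\ez\|_{L^2}^2+(\textrm{lower order})$ with constants independent of $\ez$. A secondary nuisance is the singular range $-1<\bz<0$, where all the monotonicity and interpolation steps must be run with the $\ez$-regularized weight $(|Dv^\ez|^2+\ez)^{\bz/2}$ rather than $|Dv^\ez|^{\bz}$.
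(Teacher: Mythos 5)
Your proposal is correct and follows essentially the same route as the paper. Steps 1 and 2 coincide with the paper's first two steps in both structure and all essential estimates: testing the two weak formulations with $v^\ez-v$, the Sobolev--Poincar\'e embedding $W^{1,2+\bz}_0(2B)\hookrightarrow L^2(2B)$, Young's inequality, the monotonicity inequality for $\xi\mapsto(|\xi|^2+\ez)^{\bz/2}\xi$, and the extra H\"older interpolation step in the range $-1<\bz<0$. The one organizational difference is in Step~3, where you route both conclusions $(ii)$ and $(iii)$ through a uniform bound on the weighted Hessian $\int_B(|Dv^\ez|^2+\ez)^{\bz}|D^2v^\ez|^2\,dx$. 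The paper instead invokes \cite[Theorem 2.2]{cm} literally in the form
\begin{equation*}
\int_{B_1}\bigl|D\bigl[(|Dv^\ez|^2+\ez)^{\bz/2}Dv^\ez\bigr]\bigr|^2\,dx\le C,
\end{equation*}
which handles $(ii)$ directly, and then derives $(iii)$ in a separate fourth step via the pointwise comparison $|D(|Dv^\ez|^2+\ez)^{(\bz+1)/2}|^2\le C(\bz)\,|D[(|Dv^\ez|^2+\ez)^{\bz/2}Dv^\ez]|^2$, established by a case analysis on the sign of $\bz$. The two routes are equivalent because, writing $s^2=|Dv^\ez|^2+\ez$ and expanding $D[s^\bz Dv^\ez]=s^\bz D^2v^\ez+\bz s^{\bz-2}Dv^\ez\otimes(D^2v^\ez Dv^\ez)$, one checks that the weighted Hessian and $|D[s^\bz Dv^\ez]|$ are pointwise comparable with constants $\min\{1,(1+\bz)^2\}$ and $\max\{1,(1+\bz)^2\}$, uniformly in $\ez$, for every $\bz>-1$. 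Since the cited theorem of Cianchi--Maz'ya is stated for the nonlinear gradient $(|Dv^\ez|^2+\ez)^{\bz/2}Dv^\ez$ rather than for $s^\bz D^2v^\ez$ directly, you should record this pointwise comparability explicitly (it is precisely the content that replaces the paper's sign-split computation in its Step~4); with that noted, your unified treatment of $(ii)$ and $(iii)$ through the weighted Hessian is a legitimate and slightly cleaner reorganization of the same argument.
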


Now we are able to prove Lemma \ref{lem3.1} as below.

\begin{proof}[Proof of Lemma \ref{lem3.1}] Let $\bz>-1$ but $\beta\ne0$.
Up to a partition of unit, we only need to show that \eqref{xe1.3} for all $\psi\in  C_c^\fz(B)$ whenever  $B=B(x_0,r)$ with $4B \subset\Omega$.  Fix such a ball $B$.
Let $ v^\ez$ be as in Lemma \ref{lem3.5}.
Since
$\big[|Dv^\ez|^2+\ez\big]^{\frac{\bz}2}Dv^\ez \to
|Dv|^{\bz}Dv$ weakly in $W^{1,2}(B)$ as given in Lemma \ref{lem3.5} $(ii)$, by Lemmas \ref{lem3.4} and \ref{lem3.3}  we have
\begin{align*}
&\int_B -\det D[ |Dv| ^{{\bz}}Dv] \psi\,dx\\
& = \int_B\big[
( |Dv  | ^{{\bz }}v _{x_2})_{x_2}|Dv  | ^{{\bz }}v _{x_1}\psi_{x_1}   -
( |Dv  | ^{{\bz }}v _{x_2})_{x_1}|Dv | ^{{\bz }}v _{x_1}\psi_{x_2}\big] \,dx\\
&=\lim_{\ez\to0}\int_B\big[
( (|Dv^\ez |^2+\ez) ^{{\frac \bz 2}}v^\ez _{x_2})_{x_2}(|Dv^\ez | ^2+\ez)^{{\frac \bz 2 }}v^\ez _{x_1}\psi_{x_1}   -
( (|Dv^\ez |^2+\ez) ^{{\frac \bz 2 }}v^\ez _{x_2})_{x_1}(|Dv^\ez |^2+\ez) ^{{\frac \bz 2 }}v^\ez _{x_1}\psi_{x_2}\big] \,dx\\
&=\lim_{\ez\to0} \int_B -\det D\big[( |Dv^\ez|^2+\ez) ^{{\bz/2}}Dv^\ez\big] \psi\,dx
\quad\forall  \psi\in C^\fz_c(B).
\end{align*}
Note that by Lemma \ref{lem3.2},  \eqref{xe3.2} always holds  $v^\ez$ and $\psi\in C^\fz(B)$.
To get  \eqref{xe3.1} for $v$,  it then suffices to show that
\begin{align}
 \int_B(|Dv|^2+\ez)^{\bz } (D^2\psi Dv\cdot Dv) \,dx&\to \int_B |Dv| ^{2\bz } (D^2\psi Dv\cdot Dv) \,dx\label{xe3.4},\\
  \int_B(|Dv|^2+\ez)^{\beta+1} \bdz\psi\,dx&\to \int_B|Dv|^ {2\beta+2} \bdz\psi\,dx\label{xe3.5}, \ \mbox{and}\\
  \int_B(|Dv|^2+\ez)^{\frac{\beta-1}{2}}[D(|Dv|^2+\ez)^{\frac{\bz+1}{2} }\cdot D v] (Dv\cdot D\psi) \,dx&
\to
\int_B[D |Dv| ^{\bz+1 }\cdot D v] (Dv\cdot D\psi)|Dv|^{\beta-1} \,dx \label{xe3.6}
\end{align}
for any $\psi\in C_c^\fz(B)$.

By Lemma \ref{lem3.5} $(ii)$, we have $(|Dv^\ez|^2+\ez)^{\bz/2 }   Dv^\ez\to   |Dv|^{\bz}Dv $ in $L^2(B)$, which gives \eqref{xe3.5}.

To get \eqref{xe3.4}, we only need to show
\begin{equation}\label{xe3.7}
\mbox{$|(|Dv^\ez|^2+\ez)^{\bz }   Dv^\ez\otimes Dv^\ez\to |Dv|^{2\bz}Dv\otimes Dv$ in $L^2(B)$.} \end{equation}
Noting
$$|a\otimes a-b\otimes b|\le |a\otimes a-a\otimes b|+|a\otimes b-b\otimes b|\le |a-b|\big[|a|+|b|\big],$$
we have
\begin{align*}
&|(|Dv^\ez|^2+\ez)^{\bz }   Dv^\ez\otimes Dv^\ez- |Dv|^{2\bz}Dv\otimes Dv|\\
&\quad\le |(|Dv^\ez|^2+\ez)^{\bz/2 }   Dv^\ez-  |Dv|^{\bz}Dv|\big[(|Dv^\ez|^2+\ez)^{(\bz+1)/2 }    +|Dv|^{\bz+1}\big].
\end{align*}
Since  $(|Dv^\ez|^2+\ez)^{\bz/2 }   Dv^\ez\to   |Dv|^{\bz}Dv$   in $L^2(B)$,  $(|Dv^\ez|^2+\ez)^{(\bz+1)/2 } \in L^2(B)$ uniformly in $\ez>0$,
and $|Dv|^{\bz+1}\in L^2(B)$, we obtain  \eqref{xe3.7}.

Finally,
 \eqref{xe3.6} follows from $D(|Dv^\ez|^2+\ez)^{(\bz+1)/2 }    \to  D  |Dv|^{\bz+1}  $ weakly in $L^2(B )$  as  given in Lemma \ref{lem3.5} $(iii)$, and also
$$
\mbox{$|(|Dv^\ez|^2+\ez)^{(\bz-1)/2 }   Dv^\ez\otimes Dv^\ez\to |Dv|^{\bz-1}Dv\otimes Dv$ in $L^2(B)$,}$$
which is proved  similarly to   \eqref{xe3.7}.
\end{proof}

\subsection{Proof of Lemma \ref{lem3.2}}
                \label{sec3.1}
We first recall the following fundamental identity  \eqref{xe3.8}; see    \cite{kzz,dfzz}.
\begin{lem}\label{lem3.6}
 For any $v\in C^\fz(\Omega)$, we have
\begin{equation}\label{xe3.8}
   { |D^2vDv|^2} -  {\Delta v \Delta_\fz v } =\frac12
   \big[|D^2v|^2-(\Delta v)^2\big]|Dv|^2 \quad  \mbox{in $\Omega$}.
\end{equation}
\end{lem}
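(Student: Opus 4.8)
The plan is to verify the pointwise identity \eqref{xe3.8} by a direct computation in coordinates, exploiting the fact that in dimension $2$ both sides are polynomial (actually quadratic forms in second derivatives with coefficients in first derivatives), so it suffices to check it after a convenient normalization at each fixed point. First I would observe that \eqref{xe3.8} is invariant under rotations of the $x$-variable: if $R$ is an orthogonal $2\times 2$ matrix and $\wz v(x)=v(Rx)$, then $D\wz v=R^{T}Dv(Rx)$, $D^{2}\wz v=R^{T}D^{2}v(Rx)R$, and all four quantities $|D^{2}v\,Dv|^{2}$, $\Delta v$, $\Delta_\fz v$, $|D^{2}v|^{2}$, $|Dv|^{2}$ transform as scalars (the Frobenius norm, the trace, and the inner products are all orthogonally invariant). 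Hence it is enough to prove the identity at a point $x_0$ where $Dv(x_0)$ is parallel to $e_1$, i.e. $v_{x_2}(x_0)=0$.

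At such a point write $a=v_{x_1}$, so $|Dv|^{2}=a^{2}$, and denote $p=v_{x_1x_1}$, $q=v_{x_1x_2}$, $s=v_{x_2x_2}$, all evaluated at $x_0$. Then the left-hand side becomes, after expanding: $|D^2vDv|^{2}=|(ap,\,aq)|^{2}=a^{2}(p^{2}+q^{2})$; $\Delta v=p+s$; $\Delta_\fz v=D^2vDv\cdot Dv=a^{2}p$; so the left side equals $a^{2}(p^{2}+q^{2})-(p+s)a^{2}p=a^{2}(q^{2}-ps)$. For the right-hand side, $|D^2v|^{2}=p^{2}+2q^{2}+s^{2}$ and $(\Delta v)^{2}=(p+s)^{2}=p^{2}+2ps+s^{2}$, so $|D^2v|^{2}-(\Delta v)^{2}=2q^{2}-2ps=2(q^{2}-ps)$, and multiplying by $\tfrac12|Dv|^{2}=\tfrac12 a^{2}$ gives exactly $a^{2}(q^{2}-ps)$. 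Thus both sides agree at $x_0$, and by the rotational reduction the identity holds at every point of $\Omega$. One should also remark that the degenerate case $Dv(x_0)=0$ is covered trivially, since then both sides vanish (the right side has the factor $|Dv|^{2}=0$, and the left side has $|D^2vDv|^{2}=0=\Delta_\fz v$).

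I do not anticipate a genuine obstacle here: the only thing to be careful about is making the orthogonal-invariance reduction cleanly, so that one legitimately may assume $v_{x_2}=0$ at the point in question; once that is in place the verification is the short algebraic identity $2(q^2-ps)$ displayed above. An alternative, fully coordinate-free route would be to note that for a symmetric $2\times2$ matrix $A$ one has $A^{2}-(\operatorname{tr}A)A+(\det A)I=0$ (Cayley--Hamilton), apply it to $A=D^{2}v$, pair with $Dv\otimes Dv$, and use $|D^2v|^{2}-(\operatorname{tr}D^2v)^{2}=-2\det D^2v$; contracting gives $|D^2vDv|^{2}-\Delta v\,\Delta_\fz v+\det D^2v\,|Dv|^2=0$, which is \eqref{xe3.8}. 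I would likely present the Cayley--Hamilton version as the main argument and mention the normalization computation as a sanity check, since the former makes transparent why the identity is special to $n=2$.
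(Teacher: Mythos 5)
Your proof is correct. Note that the paper itself does not prove Lemma~\ref{lem3.6}; it simply recalls the identity from \cite{kzz,dfzz}, so there is no internal argument to compare against --- your write-up supplies the verification that the paper leaves to the references. Both of your routes are sound. In the normalization argument the key point making the reduction legitimate is $D^2\tilde v\,D\tilde v=R^{T}D^2v\,Dv$, so all five scalars in \eqref{xe3.8} are orthogonal invariants; after arranging $v_{x_2}(x_0)=0$ the identity collapses to $a^{2}(q^{2}-ps)=a^{2}(q^{2}-ps)$, which you computed correctly, and you handle the degenerate case $Dv(x_0)=0$ as well. The Cayley--Hamilton route is cleaner and, as you observe, makes the two-dimensionality transparent: contracting $A^{2}-(\operatorname{tr}A)A+(\det A)I=0$ with $Dv\otimes Dv$ for $A=D^{2}v$ gives $|D^{2}vDv|^{2}-\Delta v\,\Delta_\fz v+(\det D^{2}v)|Dv|^{2}=0$, and combining with $|D^{2}v|^{2}-(\Delta v)^{2}=-2\det D^{2}v$ (which is exactly the first equality in Lemma~\ref{lem2.1}) yields \eqref{xe3.8}. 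Your instinct to lead with Cayley--Hamilton and keep the coordinate check as a sanity test is good exposition.
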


Next we build up the following  structural identity.
\begin{lem}\label{lem3.7}
For any   $v\in C^\fz(\Omega)$, $\bz\in\rr$, and $\ez>0$,  we have
\begin{align}\label{xe3.9}
&-\det D\big[(|Dv|^2+\ez)^{{\bz/2}}Dv\big]\\
&\quad =\frac{1}{2}(|Dv|^2+\ez)^{\bz }\big[|D^2v|^2-(\bdz v)^2\big]
+\bz (|Dv|^2+\ez)^{{\bz-1}}\big[|D^2v Dv|^2-\bdz v\bdz_\fz v\big]\quad{\rm in}\ \Omega. \nonumber
\end{align}
Moreover, if  in addition $|Dv|>0$ in $\Omega$, then \eqref{xe3.9} holds with $ \ez=0$.
\end{lem}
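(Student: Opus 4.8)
The plan is a direct pointwise computation, since \eqref{xe3.9} is an identity between smooth functions on $\Omega$; when $\ez=0$ the hypothesis $|Dv|>0$ is exactly what guarantees that $\rho:=|Dv|^2$ is smooth and strictly positive, which is all the argument uses. Put $\rho:=|Dv|^2+\ez$ and $F:=\rho^{\bz/2}Dv\in C^\fz(\Omega;\rr^2)$. First I would differentiate: since $D\rho=2D^2vDv$, the chain rule gives, in matrix notation,
$$DF=\rho^{\bz/2}D^2v+\bz\,\rho^{\bz/2-1}\,Dv\otimes(D^2vDv),$$
so that $DF$ is the symmetric matrix $\rho^{\bz/2}D^2v$ plus a rank-one perturbation.

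Next I would expand $\det DF$ by the rank-one (matrix determinant) identity in dimension two: for a $2\times2$ matrix $A$ and column vectors $a,b$ one has $\det(A+ab^{\mathsf T})=\det A+b^{\mathsf T}(\operatorname{adj}A)\,a$, where $\operatorname{adj}$ denotes the adjugate; recall that $\operatorname{adj}$ is homogeneous of degree one and that $\operatorname{adj}(D^2v)=\bdz v\,\mathrm{Id}-D^2v$ for symmetric $2\times2$ matrices. Applying this with $A=\rho^{\bz/2}D^2v$, $a=\bz\rho^{\bz/2-1}Dv$, $b=D^2vDv$, and simplifying via $D^2vDv\cdot Dv=\Delta_\fz v$ and $D^2vDv\cdot D^2vDv=|D^2vDv|^2$, I get
$$-\det DF=-\rho^{\bz}\det D^2v+\bz\,\rho^{\bz-1}\big[\,|D^2vDv|^2-\bdz v\,\Delta_\fz v\,\big].$$
Finally, invoking Lemma \ref{lem2.1} in the form $-\det D^2v=\frac12\big[|D^2v|^2-(\bdz v)^2\big]$ converts the first term into $\frac12\rho^{\bz}\big[|D^2v|^2-(\bdz v)^2\big]$, which is precisely \eqref{xe3.9}.

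I do not expect a genuine obstacle: the computation is short, and the only delicate point is the bookkeeping in the rank-one determinant formula — the placement of transposes and the sign in $\operatorname{adj}(D^2v)=\bdz v\,\mathrm{Id}-D^2v$ — which can be sidestepped altogether by expanding the $2\times2$ determinant of $DF$ entry by entry. The case $\ez=0$ needs no extra work: when $|Dv|>0$ on $\Omega$ the function $\rho=|Dv|^2$ is smooth and positive, so every display above holds verbatim with $\ez$ replaced by $0$. (Should one later wish to recast the bracket $|D^2vDv|^2-\bdz v\,\Delta_\fz v$, the structural identity \eqref{xe3.8} of Lemma \ref{lem3.6} does exactly that.)
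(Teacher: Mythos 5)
Your proof is correct and computes exactly the same object as the paper, namely the determinant of $DF=\rho^{\bz/2}D^2v+\bz\rho^{\bz/2-1}\,Dv\otimes(D^2vDv)$. The only organizational difference is that you package the expansion via the rank-one determinant update $\det(A+ab^{\mathsf T})=\det A+b^{\mathsf T}(\operatorname{adj}A)a$ together with $\operatorname{adj}(D^2v)=\bdz v\,\mathrm{Id}-D^2v$, which automatically suppresses the $\bz^2$ cross-term; the paper instead expands the $2\times2$ determinant entry by entry, obtains $\bz^0$, $\bz^1$, and $\bz^2$ contributions, and observes that the $\bz^2$ term vanishes. Your route is marginally cleaner and less error-prone in the bookkeeping, while the paper's brute-force expansion avoids invoking the matrix determinant lemma; both arrive at $-\det DF=-\rho^{\bz}\det D^2v+\bz\rho^{\bz-1}[|D^2vDv|^2-\bdz v\,\bdz_\fz v]$ and then use \eqref{xe2.1} from Lemma \ref{lem2.1} to rewrite $-\det D^2v$, and your handling of the $\ez=0$ case under $|Dv|>0$ is the same observation the paper makes.
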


\begin{proof}
For $1\le i,j\le 2$, one has
 \begin{equation*}
   [(|Dv|^2+\ez)^{{\bz/2}}v_{x_i}]_{x_j}=(|Dv |^2+\ez)^{{\bz/2}}\Big[v_{x_ix_j}+\bz (|Dv |^2+\ez)^{-1}(\frac{|Dv|^2}2)_{x_j}v_{x_i}\Big].
 \end{equation*}
 Thus,
\begin{align*}
&\det D[ (|Dv|^2+\ez)^{{\bz/2}}Dv]  \\
&\quad=(|Dv |^2+\ez)^{\bz }
\Big[v_{x_1x_1}+\bz (|Dv |^2+\ez)^{-1}(\frac{|Dv |^2}2)_{x_1}v _{x_1}\Big]
\Big[v_{x_2x_2}+\bz (|Dv |^2+\ez)^{-1}(\frac{|Dv |^2}2)_{x_2}v _{x_2}\Big]\\
&\quad\quad -(|Dv |^2+\ez)^{\bz }\Big[v_{x_ix_j}+\bz (|Dv |^2+\ez)^{-1}(\frac{|Dv|^2}2)_{x_2}v_{x_1}\Big]\Big
[v _{x_2x_1}+\bz (|Dv |^2+\ez)^{-1}(\frac{|Dv |^2}2)_{x_1}v _{x_2}\Big]\\
&\quad= (|Dv |^2+\ez)^{\bz }[v _{x_1x_1}v _{x_2x_2}-v _{x_1x_2}v _{x_2x_1}] +\bz (|Dv |^2+\ez)^{\bz -1}\\
&\quad\quad\quad\times \Big\{[v _{x_1x_1}(\frac{|Dv |^2}2)_{x_2}v _{x_2}
+ v _{x_2x_2}(\frac{|Dv |^2}2)_{x_1}v _{x_1}]     - [v _{x_1x_2}(\frac{|Dv |^2}2)_{x_1}v _{x_2}+
v _{x_2x_1}(\frac{|Dv |^2}2)_{x_2}v _{x_1}]\Big\} \\
&\quad\quad +\bz^2(|Dv |^2+\ez)^{\bz -2}\Big[(\frac{|Dv |^2}2)_{x_1}v _{x_1}(\frac{|Dv |^2}2)_{x_2}v _{x_2}-(\frac{|Dv |^2}2)_{x_2}v _{x_1}(\frac{|Dv |^2}2)_{x_1}v _{x_2}\Big].
\end{align*}

Observe  that the last term is $0$, the first term equals $(|Dv |^2+\ez)^{\bz }\det D^2v$.
Regarding the second term, since
$$\Delta_\fz v=(\frac{|Dv |^2}2)_{x_1}v _{x_1}+(\frac{|Dv |^2}2)_{x_2}v _{x_2}$$
and
\begin{align*}
|D^2v  Dv |^2=
& v _{x_1x_1}(\frac{|Dv |^2}2)_{x_1}v _{x_1}+v _{x_2x_2}(\frac{|Dv |^2}2)_{x_2}v _{x_2}
+v _{x_1x_2}(\frac{|Dv |^2}2)_{x_1}v _{x_2}+ v _{x_2x_1}(\frac{|Dv |^2}2)_{x_2}v _{x_1},\end{align*}
we have
\begin{align*}& \left[v _{x_1x_1}(\frac{|Dv |^2}2)_{x_2}v _{x_2}
+ v _{x_2x_2}(\frac{|Dv |^2}2)_{x_1}v _{x_1}\right]
-\left[v _{x_1x_2}(\frac{|Dv |^2}2)_{x_1}v _{x_2}+
v _{x_2x_1}(\frac{|Dv |^2}2)_{x_2}v _{x_1}\right]\\
&\quad = v _{x_1x_1}\Delta_\fz v +v _{x_2x_2}\Delta_\fz v-  v _{x_1x_1}(\frac{|Dv |^2}2)_{x_1}v _{x_1}-v _{x_2x_2}(\frac{|Dv |^2}2)_{x_2}v _{x_2}\\
&\quad\quad- \left[v _{x_1x_2}(\frac{|Dv |^2}2)_{x_1}v _{x_2}+
v _{x_2x_1}(\frac{|Dv |^2}2)_{x_2}v _{x_1}\right]\\
&\quad=\bdz v \bdz_\fz v-
|D^2v  Dv |^2.
\end{align*}
Thus,   the second term equals $-\bz (|Dv|^2+\ez)^{{\bz-1}}\big[|D^2v Dv|^2-\bdz v\bdz_\fz v\big]$. We therefore obtain \eqref{xe3.9}.
Finally we note that if $|Dv(x)|>0$, the above argument holds with $\ez=0$.
 This completes the proof of Lemma \ref{lem3.7}.
\end{proof}

We are ready to prove Lemma \ref{lem3.2}.

\begin{proof}[Proof of Lemma \ref{lem3.2}]
 By \eqref{xe2.1} and  integration by parts, for any $\psi\in C^\fz_c(\Omega)$   we have
\begin{align*}
& \int_\Omega(|Dv|^2+\ez)^{\bz }\big[|D^2v|^2-(\bdz v)^2\big]\psi\,dx\\
&\quad=  \int_\Omega(|Dv|^2+\ez)^{\bz }{\rm div}( D^2v Dv-\bdz v Dv)\psi\,dx\\
&\quad= -2\bz \int_\Omega(|Dv|^2+\ez)^{{\bz-1}}\big[|D^2v Dv|^2-\bdz_\fz v\bdz v\big]\psi\,dx\\
&\quad\quad+ \int_\Omega(|Dv|^2+\ez)^{\bz }[\bdz v Dv\cdot D\psi-D^2v Dv\cdot D\psi] \,dx.
\end{align*}
From this and  \eqref{xe3.9}, it follows that
\begin{align*}
\int_\Omega-\det D\big[(|Dv|^2+\ez)^{{\bz/2}}Dv\big] \psi\,dx
&=\frac12\int_\Omega(|Dv|^2+\ez)^{\bz }\big[\bdz v Dv\cdot D\psi-D^2v Dv\cdot D\psi\big] \,dx.
\end{align*}
By integration by parts again, we have
\begin{align*}
&\frac12\int_\Omega\bdz v(|Dv|^2+\ez)^{\bz }( Dv\cdot D\psi) \,dx\\
&\quad=
-\frac12\int_\Omega(|Dv|^2+\ez)^{\bz } (D^2\psi Dv\cdot Dv) \,dx\\
&\quad\quad-\frac12\int_\Omega(|Dv|^2+\ez)^{\bz } (D^2v Dv\cdot D\psi)  \,dx
-\frac12\int_\Omega\big[D(|Dv|^2+\ez)^{\bz }\cdot D v\big] (Dv\cdot D\psi) \,dx.
\end{align*}
Noting
$$(|Dv|^2+\ez)^{\bz }D^2v Dv=\frac{D(|Dv|^2+\ez)^{\beta+1}}{2\beta+2},$$
one further gets
\begin{align*}
-\int_\Omega (|Dv|^2+\ez)^{\bz } D^2v Dv\cdot D\psi \,dx
&=-\frac{1}{2\beta+2}\int_\Omega\big[D(|Dv|^2+\ez)^{\beta+1}\cdot D\psi\big]  \,dx\\
&=\frac{1}{2\beta+2}\int_\Omega(|Dv|^2+\ez)^{\beta+1}  \bdz\psi \,dx.
\end{align*}

We also observe that
$$D(|Dv|^2+\ez)^{\beta}=\frac{2\beta}{\beta+1}
(|Dv|^2+\ez)^{\frac{\beta-1}{2}}D(|Dv|^2+\ez)^{\frac{\beta+1}{2}}.$$
Thus,
\begin{align*}
&-\frac12\int_\Omega\big[D(|Dv|^2+\ez)^{\bz }\cdot D v\big] (Dv\cdot D\psi) \,dx\\
&\quad=-\frac{\beta}{\beta+1}\int_\Omega(|Dv|^2+\ez)^{\frac{\beta-1}{2}}
\big[D(|Dv|^2+\ez)^{\frac{\bz+1}{2} }\cdot D v\big] (Dv\cdot D\psi) \,dx.
\end{align*}
Combining all above together we obtain the desired identity \eqref{xe3.2}.
\end{proof}

\subsection{Proof of Lemma \ref{lem3.5}}
                            \label{sec3.2}
\begin{proof}[Proof of Lemma  \ref{lem3.5}.]
Up to some scaling and translation, we assume that $x_0=0$ and $r=1$. Write $B_m=B(0,m)= m B(0,1)$ for $m\ge 1$.
We divide the proof into three steps.

\medskip
{\bf Step 1}:  Prove   $v^\ez\in L^2(B_2)$ and    $Dv^\ez\in L^{ 2+\bz}(B_2) $ uniformly in $\ez\in(0,1]$.

Since $v^\ez-v\in W^{1,\bz+2}_0(B_2)$, by the Sobolev-Poincar\'e inequality, it suffices to prove that $Dv^\ez\in L^{2+\bz}(B_2)$ uniformly in $\ez\in(0,1]$.
Choosing the test function $v^\ez-v\in W^{1,\bz+2}_0(B_2)$ to equation \eqref{xe3.3} we get
\begin{align}\label{xe3.11}
\int_{B_2}(|Dv^\ez|^2+\ez)^{\frac \bz 2}Dv^\ez\cdot (Dv^\ez-Dv)\,dx=
-\int_{B_2}g^\ez(v^\ez-v)\,dx,
\end{align}
or equivalently,
$$\int_{B_2}(|Dv^\ez|^2+\ez)^{\frac \bz 2}|Dv^\ez|^2 \,dx=
\int_{B_2}(|Dv^\ez|^2+\ez)^{\frac \bz 2}Dv^\ez\cdot  Dv\,dx
-\int_{B_2}g^\ez(v^\ez-v)\,dx.$$

Young's inequality yields that
\begin{align*}
\int_{B_2}(|Dv^\ez|^2+\ez)^{\frac \bz 2}Dv^\ez\cdot  Dv\,dx
&\le \int_{B_2}(|Dv^\ez|^2+\ez)^{\frac {\bz+1} 2}| Dv|\,dx\\
&\le  \frac1{2^{4+\bz}}\int_{B_2}(|Dv^\ez|^2+\ez)^{\frac \bz 2+1}\,dx+C(\beta)\int_{B_2}|Dv|^{2+\bz}\,dx\\
&\le  \frac14\int_{B_2} |Dv^\ez|  ^{  \bz+2 }\,dx+\frac14  +C(\beta)\int_{B_2}|Dv|^{2+\bz}\,dx.
\end{align*}

By H\"older's inequality, the Sobolev-Poincar\'e inequality, and Young's inequality, one has
\begin{align*}
-\int_{B_2}g^\ez(v^\ez-v)\,dx&\le \Big(\int_{B_2}(g^\ez)^2\,dx\Big)^{1/2} \Big(\int_{B_2}|v^\ez-v|^2\,dx\Big)^{1/2}\\
&\le C\Big(\int_{B_{3}}g^2\,dx\Big)^{1/2} \Big(\int_{B_2}|Dv^\ez-Dv|^{2+\beta}\,dx\Big)^{\frac 1{ 2+\beta}} \\
&\le   C\Big(\int_{B_{3}}g^2\,dx\Big)^{\frac{2+\beta}{2(1+\beta)}} + \frac14 \int_{B_2}|Dv^\ez  |^{2+ \bz}\,dx+   C \int_{B_2}|Dv|^{2+\bz} \,dx.
\end{align*}
Therefore,
 we obtain
\begin{align*}
\int_{B_2}|Dv^\ez|^{\bz+2}\,dx
&\le C( \bz)\int_{B_2}|Dv|^{\bz+2}\,dx +\Big(\int_{B_{3}}g^2\,dx\Big)^{\frac{2+\beta}{2(1+\beta)}}+C. \end{align*}
Thus,
$Dv^\ez\in L^{\bz+2}(B_2)$ uniformly in $\ez\in (0,1]$.

\medskip
{\bf Step 2}: Prove
$v^\ez\to v$ in $W^{1,2+\bz}(B_2)$ as $\ez\to0$.

 Since $g={\rm div}(|Dv|^\bz Dv)$  and $v^\ez-v\in W^{1,p}_0(B_2)$ we   have
 $$ -\int_{B_2} g   (v^\ez-v)\,dx=\int_{B_2} |Dv|^{\bz}Dv \cdot (Dv^\ez-Dv)\,dx.$$
By this and \eqref{xe3.11},   one has
\begin{align*}
&\int_{B_2} (|Dv^\ez|^2+\ez)^{\frac \bz2}Dv^\ez \cdot (Dv^\ez-Dv)\,dx\nonumber \\
&\quad=\int_{B_2} |Dv|^{\bz}Dv \cdot (Dv^\ez-Dv)\,dx+\int_{B_2}(g^\ez-g)(v^\ez-v)\,dx,
\end{align*}
and hence
\begin{align*}
&\int_{B_2}\left((|Dv^\ez|^2+\ez)^{\frac \bz2}Dv^\ez-(|Dv|^{2}+\ez)^{\frac \bz2}Dv\right)\cdot (Dv^\ez-Dv)\,dx\nonumber \\
&\quad=\int_{B_2}\left(|Dv|^{\bz}Dv-(|Dv|^{2}+\ez)^{\frac \bz2}Dv\right)\cdot (Dv^\ez-Dv)\,dx+\int_{B_2}(g^\ez-g)(v^\ez-v)\,dx.
\end{align*}

Observe that, when $\bz>-1$, it holds that
\begin{align*}
(|\xi|^2+|\eta|^2+\ez)^{\frac \bz2}|\xi-\eta |^2\le C(\bz)\left((|\xi|^{2}+\ez)^{\frac \bz2}\xi-(|\eta|^{2}+\ez)^{\frac \bz2}\eta\right)\cdot (\xi-\eta)\quad\forall \xi,\eta\in\rr^2.
\end{align*}
Thus,
\begin{align*}
&\int_{B_2}(|Dv^\ez|^2+|Dv|^2+\ez)^{\frac \bz2}|Dv^\ez-Dv|^2\,dx\nonumber\\
&\quad\le C(\bz)\int_{B_2}\left(|Dv|^{\bz}Dv-(|Dv|^{2}+\ez)^{\frac \bz2}Dv\right)\cdot (Dv^\ez-Dv)\,dx+\int_{B_2}(g^\ez-g)(v^\ez-v)\,dx.
\end{align*}
By H\"older's inequality, we have
\begin{align} \label{xe3.12}
&\int_{B_2}(|Dv^\ez|^2+|Dv|^2+\ez)^{\frac \bz2}|Dv^\ez-Dv|^2\,dx \\
&\quad\le C(\bz)\left(\int_{B_2}\left||Dv|^{\bz}Dv-(|Dv|^{2}+\ez)^{\frac \bz2}Dv
\right|^{\frac{\bz+2}{\bz+1}}\,dx\right)^{\frac{\bz+1}{\bz+2}}\left(
\int_{B_2}\big[|Dv^\ez|^{\bz+2}+|Dv|^{\bz+2}\big]\right)^{\frac{1}{\bz+2}}\nonumber\\
&\quad\quad+C(\bz)\left(\int_{B_2}|g^\ez-g|^2\,dx\right)^{\frac 12}\left(
\int_{B_2}\big[| v^\ez|^2+| v|^2\big]\,dx\right)^{\frac12}.\nonumber
\end{align}
Recalling  that $v^\ez\in W^{1,\bz+2}(B_2)$ uniformly in $\ez\in(0,1]$ as given in the step 1, and noting that
 $ -(|Dv|^{2}+\ez)^{\frac \bz2}Dv \to |Dv|^{\bz}Dv$ in $L^{\frac{\bz+2}{\bz+1}}(B_2)$ as $\ez\to0$,
we deduce that the first term in the right-hand side of \eqref{xe3.12}  tends to zero as $\ez\to 0$.
Since $v^\ez\in    W^{1,\beta+2}(B_2) $ uniformly in $\ez\in(0,1]$ and recall $g^\ez\to g$ in $L^2(B_2)$,  the second term of right-hand side of \eqref{xe3.12} tends to $0$ as $\ez\to0$.  Thus,
\begin{align}\label{xe3.13}
&\int_{B_2}(|Dv^\ez|^2+|Dv|^2+\ez)^{\frac \bz2}|Dv^\ez-Dv|^2\,dx
\to 0.
\end{align}

If $\bz>0$,  since
 $$
 |Dv^\ez-Dv|^{2+\bz}\le C(\bz)(|Dv^\ez|^2+|Dv|^2+\ez)^{\frac \bz2}|Dv^\ez-Dv|^2,
 $$
we have $Dv^\ez\to Dv$ in $L^{2+\bz}(B_2)$ as $\ez\to 0$.
 Thus, by $v^\ez-v\in W^{1,2+\bz}_0(B_2)$ and using the
Sobolev inequality we always have $v^\ez\to v$ in $W^{1,2+\bz}(B_2)$ as $\ez\to0$.

If $\bz\in (-1,0)$, by H\"older's inequality we get
\begin{align*}
&\int_{B_2}|Dv^\ez-Dv|^{\bz+2}\,dx\\
&\quad\le \left(
\int_{B_2}(|Dv^\ez|^2+|Dv|^2+\ez)^{\frac \bz2}|Dv^\ez-Dv|^2\,dx\right)^{\frac{\bz+2}{2}}  \left(
\int_{B_2}(|Dv^\ez|^2+|Dv|^2+\ez)^{\frac {\bz+2}2}\,dx\right)^{\frac{-\bz}{2}}.
\end{align*}
By Step 1, $v^\ez\in W^{1,2+\bz}(B_2)\cap L^2(B_2)$ uniformly in
$\ez\in(0,1)$.  Then \eqref{xe3.13}  yields that $Dv^\ez\to Dv$ in $L^{2+\bz}(B_2)$ as $\ez\to 0$.
We further conclude $v^\ez\to v$ in $W^{1,2+\bz}(B_2)$ as $\ez\to0$.

\medskip
{\bf Step 3}: Prove $(|Dv^\ez|^2+\ez)^{\frac \bz2}Dv^\ez\in W^{1,2}(B_1)$ uniformly
in $\ez \in(0,1]$ and $(|Dv^\ez|^2+\ez)^{\frac \bz2}Dv^\ez \to |Dv|^{\bz}Dv$ weakly
in $W^{1,2}(B_1)$ as $\ez\to0$.

By the local second order estimates   in \cite[Theorem 2.2]{cm} , we have
\begin{align*}
\int_{B_1}\left|D\big[(|Dv^\ez|^2+\ez)^{\frac \bz2}Dv^\ez\big]\right|^2\,dx\le C_0\int_{B_2}(g^\ez)^2\,dx
+C_0\left(\int_{B_2}\big[|Dv^\ez|^{\bz+1}+\ez^{\frac{\bz+1}{2}}\big]\,dx\right)^2.
\end{align*}
Sine $g^\ez\in L^2(B_2)$ uniformly in $\ez\in(0,1]$ and, by the step 1,
$Dv^\ez\in L^{2+\bz}(B_2)$ uniformly in $\ez\in(0,1]$, one has
$D\big[(|Dv^\ez|^2+\ez)^{\frac \bz2}Dv^\ez\big]\in L^2(B_1)$ uniformly
in $\ez\in(0,1]$. By the Sobolev-Poincar\'e inequality, we also have
\begin{align*}
\int_{B_1}|(|Dv^\ez|^2+\ez)^{\frac \bz2}Dv^\ez|^{2}\,dx
&\le\int_{B_1}\left|(|Dv^\ez|^2+\ez)^{\frac \bz2}Dv^\ez-\fint_{B_1}(|Dv^\ez|^2+\ez)^{\frac \bz2}Dv^\ez\,dx\right|^2\,dx\\
&\quad+\left|\int_{B_1}(|Dv^\ez|^2+\ez)^{\frac \bz2}Dv^\ez\,dx\right|^2\\
&\le C_0\int_{B_1}|D[(|Dv^\ez|^2+\ez)^{\frac \bz2}Dv^\ez]|^2\,dx
+C_0\left|\int_{B_1}[|Dv^\ez|^{\bz+1}+\ez^{\frac{\bz+1}{2}}]\,dx\right|^2.
\end{align*}
Thus $(|Dv^\ez|^2+\ez)^{\frac \bz2}Dv^\ez\in L^{2}(B_1)$ uniformly
in $\ez\in(0,1]$.

 Therefore, by weak compactness of Sobolev space there exists $f\in W^{1,2}(B_1)$ such
 that {along a subsequence}
 $$D\big[(|Dv^\ez|^2+\ez)^{\frac \bz2}Dv^\ez\big]\to Df\quad {\rm weakly\ in}\ L^2(B_1)$$
and
$$
(|Dv^\ez|^2+\ez)^{\frac \bz2}Dv^\ez\to f\quad{\rm in}\quad L^2(B_1).
$$
By Step 2,   $Dv^\ez\to Dv$  in $L^{2+\bz}(B_2)$ as $\ez\to0$,
and hence
$(|Dv^\ez|^2+\ez)^{\frac \bz2}Dv^\ez\to |Dv|^\bz Dv$ almost everywhere along a subsequence.
We conclude that $f=|Dv|^{\bz}Dv$ in $B_1$ as desired.

\medskip
{\bf Step 4}: Prove $(|Dv^\ez|^2+\ez)^{\frac {\bz+1}2}\in W^{1,2}(B_1)$ uniformly
in $\ez\in(0,1]$ and $(|Dv^\ez|^2+\ez)^{\frac {\bz+1}2} \to |Dv|^{\bz+1}$ weakly
in $W^{1,2}(B_1)$ as $\ez\to0$.

Note that
$$ |D (|Dv^\ez|^2+\ez)^{\frac {\bz+1}2}|^2=(\bz+1)^2|(|Dv^\ez|^2+\ez)^{  {\bz-1}} | D^2v^\ez Dv^\ez|^2     $$
and
\begin{align*}
\Big|D\big[(|Dv^\ez|^2+\ez)^{\frac \bz2}Dv^\ez\big]\Big|^2&= (|Dv^\ez|^2+\ez)^{ \bz }|D^2v^\ez|^2+ 2\bz
|(|Dv^\ez|^2+\ez)^{  {\bz-1}}  |D^2v^\ez Dv^\ez|^2\\
&\quad+\bz^2(|Dv^\ez|^2+\ez)^{  {\bz-2}}  |Dv^\ez|^2 |D^2v^\ez Dv^\ez|^2.
\end{align*}
If $\bz>0$, then
$$
|D (|Dv^\ez|^2+\ez)^{\frac {\bz+1}2}|^2\le (\bz+1)^2(|Dv^\ez|^2+\ez)^{ \bz }|D^2v^\ez|^2 \le C(\beta) |D[(|Dv^\ez|^2+\ez)^{\frac \bz2}Dv^\ez]|^2.
$$
If $\bz\in(0,1)$,
\begin{align*}
\Big|D\big[(|Dv^\ez|^2+\ez)^{\frac \bz2}Dv^\ez\big]\Big|^2
&= (|Dv^\ez|^2+\ez)^{{\bz-1}} \big[|D^2v^\ez|^2|Dv^\ez|^2+  2\bz|D^2v^\ez Dv^\ez|^2+ \bz^2   |D^2v^\ez Dv^\ez|^2\big]\\
&\quad +
(|Dv^\ez|^2+\ez)^{\bz-2}
\big[(|Dv^\ez|^2+\ez)\ez|D^2v^\ez|^2- \bz^2\ez |D^2v^\ez Dv^\ez|^2\big]\\
&\ge (\bz+1)^2 (|Dv^\ez|^2+\ez)^{  {\bz-1}}   |D^2v^\ez Dv^\ez|^2\\
&=  |D (|Dv^\ez|^2+\ez)^{\frac {\bz+1}2}|^2.
\end{align*}
Thus by Step 3, $(|Dv^\ez|^2+\ez)^{\frac {\bz+1}2}\in W^{1,2}(B_1)$ uniformly
in $\ez\in(0,1]$, and also $(|Dv^\ez|^2+\ez)^{\frac {\bz+1}2} \to |Dv|^{\bz+1}$ weakly
in $W^{1,2}(B_1)$ as $\ez\to0$.
\end{proof}

\section{Some properties of  distributional Jacobian determinant}
                        \label{sec4}

We  build up the following stability result.
\begin{lem} \label{lem4.1}
Let $\bz>-1$. If $v_j\to v$ in $W^{1,2+\bz}_\loc(\Omega)$ as $j\to\fz$ and $\bz |Dv_j|^{\bz+1}\in W^{1,2}_\loc(\Omega)$ uniformly in $j$, then
\begin{enumerate}
\item[$(i)$]
$\bz |Dv_j|^{\bz+1}\to\bz |Dv|^{\bz+1}$ in $L^q_\loc(\Omega)$ for any $q>1$ and weakly in $W^{1,2}_\loc(\Omega)$;

\item[$(ii)$]
$\bz |Dv_j|^{\bz }Dv_j\to\bz |Dv|^{\bz }Dv$  and $\bz |Dv_j|^{\bz-1 }Dv_j\otimes Dv_j \to
\bz |Dv|^{\bz-1 }Dv\otimes Dv$  in $L^q_\loc(\Omega)$ for any $q>1$.

\item[$(iii)$]   $-\det D\big[|Dv_j|^\bz Dv_j\big]\to -\det D\big[|Dv|^\bz Dv\big]$ in the distributional sense,
i.e.,
\begin{align}\label{xe4.1}\int_\Omega-\det D[|Dv_j|^\bz v_j]\psi\,dx\to \int_\Omega -\det D[|Dv|^\bz Dv] \psi\,dx\quad\forall\psi\in C_c^\fz(\Omega).
\end{align}
\end{enumerate}
\end{lem}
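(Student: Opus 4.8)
Everything reduces to a single mechanism: since $v_j\to v$ in $W^{1,2+\bz}_\loc(\Omega)$ we have $Dv_j\to Dv$ in $L^{2+\bz}_\loc(\Omega)$, so every subsequence admits a further subsequence along which $Dv_j\to Dv$ almost everywhere; I would upgrade this to strong $L^q_\loc$ convergence of the relevant nonlinear quantities using the uniform higher integrability furnished by the hypothesis $\bz|Dv_j|^{\bz+1}\in W^{1,2}_\loc(\Omega)$, and then recover convergence of the full sequence by the standard ``every subsequence has a further subsequence with the same limit'' principle. The case $\bz=0$ is classical (it follows from \eqref{xe1.3} and $Dv_j\to Dv$ in $L^2_\loc(\Omega)$), so I may assume $\bz\ne0$, in which case each assertion is unchanged after dividing by $\bz$ and I work directly with $|Dv_j|^{\bz+1}$, $|Dv_j|^{\bz}Dv_j$, and $|Dv_j|^{\bz-1}Dv_j\otimes Dv_j$.

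\textbf{Parts (i) and (ii).} Since $\bz>-1$ gives $\bz+1>0$, the maps $\xi\mapsto|\xi|^{\bz+1}$, $\xi\mapsto|\xi|^{\bz}\xi$ and $\xi\mapsto|\xi|^{\bz-1}\xi\otimes\xi$ are continuous on all of $\rr^2$ (the last two have Euclidean norm $|\xi|^{\bz+1}\to0$ as $\xi\to0$), so along the a.e.-convergent subsequence all three, evaluated at $Dv_j$, converge a.e.\ to the corresponding expressions in $Dv$. For the higher integrability I would invoke the planar Sobolev embedding $W^{1,2}\hookrightarrow L^s$ for every $s<\infty$: the uniform $W^{1,2}_\loc$ bound on $|Dv_j|^{\bz+1}$ makes $\{|Dv_j|^{\bz+1}\}$ bounded in $L^s_\loc(\Omega)$ for all $s<\infty$, and since the three quantities above all have pointwise norm $\le|Dv_j|^{\bz+1}$, the same holds for them; hence all these families are uniformly integrable to every power $q<\infty$ on compacta, and Vitali's convergence theorem turns the a.e.\ convergence into $L^q_\loc(\Omega)$ convergence along the subsequence, hence for the full sequence. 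Finally, the uniform $W^{1,2}_\loc$ bound on $|Dv_j|^{\bz+1}$ also yields, along a further subsequence, weak convergence in $W^{1,2}_\loc(\Omega)$ to a limit that must agree with the strong $L^q_\loc$ limit $|Dv|^{\bz+1}$; thus $|Dv|^{\bz+1}\in W^{1,2}_\loc(\Omega)$ and $|Dv_j|^{\bz+1}\rightharpoonup|Dv|^{\bz+1}$ weakly in $W^{1,2}_\loc(\Omega)$, which completes (i) and (ii).

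\textbf{Part (iii) and the main point.} For (iii) I would pass to the limit term by term in the defining identity \eqref{xe3.1} for $-\det D[|Dv_j|^{\bz}Dv_j]$. Rewriting $|Dv_j|^{2\bz}(D^2\psi Dv_j\cdot Dv_j)=D^2\psi:\big((|Dv_j|^{\bz}Dv_j)\otimes(|Dv_j|^{\bz}Dv_j)\big)$ and $|Dv_j|^{2\bz+2}=(|Dv_j|^{\bz+1})^2$, parts (i) and (ii) with $q=2$ give convergence in $L^1_\loc(\Omega)$ of the integrands of the first two terms (a product of two sequences convergent in $L^2_\loc$ converges in $L^1_\loc$), hence of those integrals. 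For the third term I would use the factorization
\begin{align*}
&\int_\Omega\big[D|Dv_j|^{\bz+1}\cdot Dv_j\big](Dv_j\cdot D\psi)|Dv_j|^{\bz-1}\,dx\\
&\qquad=\int_\Omega D|Dv_j|^{\bz+1}\cdot\big[(|Dv_j|^{\bz-1}Dv_j\otimes Dv_j)D\psi\big]\,dx,
\end{align*}
in which $D|Dv_j|^{\bz+1}\rightharpoonup D|Dv|^{\bz+1}$ only \emph{weakly} in $L^2_\loc(\Omega)$ by (i), while $(|Dv_j|^{\bz-1}Dv_j\otimes Dv_j)D\psi\to(|Dv|^{\bz-1}Dv\otimes Dv)D\psi$ \emph{strongly} in $L^2_\loc(\Omega)$ by (ii) (here $D\psi$ is smooth with compact support); the weak--strong pairing then carries this integral to its limit. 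Collecting the three limits and comparing with \eqref{xe3.1} written for $v$ yields \eqref{xe4.1}. I expect this weak--strong compensation in the third term to be the only genuine obstacle: one does not have strong $L^2_\loc$ convergence of the derivatives $D|Dv_j|^{\bz+1}$ --- only the weak convergence coming from the uniform $W^{1,2}_\loc$ bound --- so the convergence of the third integral must be extracted by pairing against the strongly convergent factor, which is precisely what the factorization above arranges.
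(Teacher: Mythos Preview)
Your proposal is correct and follows essentially the same route as the paper: compact Sobolev embedding in the plane to obtain uniform higher integrability, a.e.\ convergence along subsequences to identify the limits, and the weak--strong pairing to pass to the limit in the third term of \eqref{xe1.5}. The only cosmetic difference is in part (ii): you invoke continuity of $\xi\mapsto|\xi|^{\bz}\xi$ and $\xi\mapsto|\xi|^{\bz-1}\xi\otimes\xi$ together with Vitali's theorem, whereas the paper writes down explicit pointwise inequalities (splitting into the cases $Dv(x)=0$ and $Dv(x)\ne0$, and using $\overline{\xi}=\xi/|\xi|$) and then applies dominated convergence with $|Dv_j|^{\bz+1}$ as the dominating sequence already controlled by (i). Your version is slightly cleaner; the paper's version makes the domination more explicit. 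Both are valid and the overall architecture is the same.
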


\begin{proof} Case $\bz=0$ is easy. We only consider the case $\bz\ne0$.
Since $  |Dv_j|^{\bz+1}\in W^{1,2}_\loc(\Omega)$,  by the {compact embedding theorem},  there is a function $f\in W^{1,2}_\loc(\Omega)$ such that one has  that
  $ |Dv_j|^{\bz+1}\to {f}$  in $L^q_\loc(\Omega)$ for any $1<q<\fz$ and  weakly in $W^{1,2}_\loc(\Omega)$ as $j\to\fz$ up to some subsequence.
 Since $ Dv_j \to  Dv  $ in $L^1_\loc(\Omega)$, we know that   $f=|Dv|^{\bz+1}$.
Thus $|Dv|^{\bz+1}\in W^{1,2}_\loc(\Omega)$
and
 $|Dv_j|^{\bz+1}$ converges {strongly} to $|Dv|^{\bz+1}$ in $L^q_\loc(\Omega)$ for any $1<q<\fz$ and  weakly in $W^{1,2}_\loc(\Omega)$. Therefore, $(i)$ holds.

To see $(ii)$,    observe that if $Dv(x)=0$,  one has
$$|
|Dv_j|^{\bz } Dv_j - |Dv |^{\bz } Dv  |\le ||Dv_j|^{\bz+1} -|Dv|^{\bz+1}|,$$
and
$$|
|Dv_j|^{\bz-1} Dv_j\otimes Dv_j- |Dv |^{\bz-1} Dv \otimes Dv|\le ||Dv_j|^{\bz+1} -|Dv|^{\bz+1}|.$$
 If $Dv(x)\ne 0$,  one has
$$|
|Dv_j|^{\bz } Dv_j - |Dv |^{\bz } Dv  |\le ||Dv_j|^{\bz+1}  -|Dv |^{\bz+1}|+|Dv |^{\bz+1} |\overline{Dv_j}  -\overline{ Dv }|,$$
and
$$|
|Dv_j|^{\bz-1} Dv_j\otimes Dv_j- |Dv |^{\bz-1} Dv \otimes Dv|\le
 ||Dv_j|^{\bz+1}  -|Dv |^{\bz+1}|+ |Dv |^{\bz+1}| \overline{Dv_j}\otimes \overline{Dv_j} -\overline{Dv }   \otimes \overline{Dv}|$$
where we set  $\overline{\xi}=\xi/|\xi|$ when $\xi\ne 0$ and $\overline{\xi}=0$ when  $\xi=0$.
Since along a subsequence $Dv_j\to Dv$ almost everywhere, we know that
$\overline{Dv_j}\otimes \overline{Dv_j}\to \overline{Dv}\otimes \overline{Dv}$ almost everywhere in
$\Omega\setminus\{x,Dv(x)=0\}$.
By the Lebesgue  dominated convergence, we conclude $(ii)$ from above and $(i)$.

 Finally, note that  $(i)$ and $(ii)$ implies
\begin{align*}
&
\int_\Omega|Dv_j|^{2\beta+2} \bdz\psi \,dx \to
\int_\Omega|Dv |^{2\beta+2} \bdz\psi \,dx, \\
&  \int_\Omega|Dv_j |^{2\beta} (D^2\psi Dv_j \cdot Dv_j ) \,dx \to  \int_\Omega|Dv |^{2\beta} (D^2\psi Dv \cdot Dv ) \,dx, \ \mbox{and}\\
&
\int_\Omega\big[D |Dv_j | ^{\bz+1 }\cdot D  v_j \big]
(Dv_j\cdot D\psi)|Dv_j |^{\beta-1} \,dx\to
\int_\Omega\big[D |Dv  | ^{\bz+1 }\cdot D  u  \big] (Dv^\ez \cdot D\psi)|Dv  |^{\beta-1} \,dx
\end{align*}
for all $\psi\in C_c^\fz(\Omega)$.
We conclude \eqref{xe4.1} from  these and the definitions of  $-\det D[|Dv_j|^\bz Dv_j]$ and
$  -\det D[|Dv|^\bz Dv]$.
\end{proof}

\begin{lem} \label{lem4.2}

Let $ v\in C^\fz(\Omega)$ satisfy $\bz|Dv|^{\bz+1}\in W^{1,2}_\loc(\Omega)$ for some $\bz>-1$.
We have
\begin{align*}
\lim_{\ez\to0}\int_\Omega-\det D\big[(|Du |^2+\ez)^{\bz/2} D u \big]\psi\,dx
\to \int_\Omega -\det D\big[|Du|^\bz Du\big] \psi\,dx\quad \forall\ \psi\in C_c^\fz(\Omega).
\end{align*}
\end{lem}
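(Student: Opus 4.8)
The plan is to reduce everything to the explicit formula of Lemma \ref{lem3.2} and pass to the limit $\ez\to0^+$ term by term. The case $\bz=0$ is trivial, since $(|Dv|^2+\ez)^{0/2}Dv=Dv$ for every $\ez$, so the left-hand side is constantly $\int_\Omega-\det D^2v\,\psi\,dx$, which is the right-hand side of \eqref{xe1.5} for $\bz=0$. So I assume $\bz\ne0$; then the hypothesis says $|Dv|^{\bz+1}\in W^{1,2}_\loc(\Omega)$. Fix $\psi\in C^\fz_c(\Omega)$ and set $K:={\rm supp}\,\psi$. For each $\ez\in(0,1]$, Lemma \ref{lem3.2} applies to $v\in C^\fz(\Omega)$ (it is valid for all $\ez\ge0$ when $\bz\ge0$ and for all $\ez>0$ when $\bz\in(-1,0)$) and gives
\begin{align*}
\int_\Omega-\det D\big[(|Dv|^2+\ez)^{\bz/2}Dv\big]\psi\,dx=I_1^\ez+I_2^\ez+I_3^\ez ,
\end{align*}
where $I_1^\ez,I_2^\ez,I_3^\ez$ are the three integrals on the right-hand side of \eqref{xe3.2}. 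Comparing with \eqref{xe1.5}, it suffices to prove that each $I_k^\ez$ converges, as $\ez\to0^+$, to the $k$-th term on the right-hand side of \eqref{xe1.5}; this gives the statement, with the right-hand side read as $\int_\Omega\det DV_\bz(Dv)\psi\,dx$, which coincides with $\int_\Omega-\det D[|Dv|^\bz Dv]\psi\,dx$ whenever $|Dv|^\bz Dv\in W^{1,2}_\loc(\Omega)$ by Lemma \ref{lem3.1}.

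For $I_1^\ez$ and $I_2^\ez$ this is a routine dominated-convergence argument. Since $\bz+1>0$ and $v$ is smooth, $(|Dv|^2+\ez)^{\bz+1}\le(|Dv|^2+1)^{\bz+1}\in L^\fz(K)$ for $\ez\le1$, and $(|Dv|^2+\ez)^{\bz}|D^2\psi\,Dv\cdot Dv|\le\|D^2\psi\|_{L^\fz(K)}(|Dv|^2+\ez)^{\bz+1}$ is dominated on $K$ by the same function; the integrands converge pointwise (on $\{Dv\ne0\}$ by continuity, and to $0$ on $\{Dv=0\}$, where also the target integrands vanish because $\bz+1>0$). Hence $I_1^\ez\to-\tfrac12\int_\Omega|Dv|^{2\bz}(D^2\psi\,Dv\cdot Dv)\,dx$ and $I_2^\ez\to\tfrac1{2\bz+2}\int_\Omega|Dv|^{2\bz+2}\bdz\psi\,dx$.

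The only delicate point is $I_3^\ez$. Using $D(|Dv|^2+\ez)^{(\bz+1)/2}=(\bz+1)(|Dv|^2+\ez)^{(\bz-1)/2}D^2v\,Dv$, I rewrite
\begin{align*}
I_3^\ez=-\bz\int_\Omega(|Dv|^2+\ez)^{\bz-1}(D^2v\,Dv\cdot Dv)(Dv\cdot D\psi)\,dx .
\end{align*}
When $\bz\ge0$, $(|Dv|^2+\ez)^{\bz-1}|Dv|^2\le(|Dv|^2+\ez)^{\bz}\le(|Dv|^2+1)^{\bz}$ on $K$ for $\ez\le1$, so the integrand is dominated on $K$ by $\|D\psi\|_{L^\fz(K)}(|Dv|^2+1)^{\bz}|D^2v\,Dv|\in L^\fz(K)$. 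When $\bz\in(-1,0)$ the weight $(|Dv|^2+\ez)^{\bz-1}$ blows up near the critical set $\{Dv=0\}$ as $\ez\to0$, and here I bound the absolute value of the integrand by
\begin{align*}
\|D\psi\|_{L^\fz(K)}\big[(|Dv|^2+\ez)^{(\bz-1)/2}|D^2v\,Dv|\big]\big[(|Dv|^2+\ez)^{(\bz-1)/2}|Dv|^2\big],
\end{align*}
where the first bracket is $\le|Dv|^{\bz-1}|D^2v\,Dv|=\tfrac1{\bz+1}|D|Dv|^{\bz+1}|$ (using $\bz-1<0$), which lies in $L^2(K)$ independently of $\ez$ by the hypothesis $|Dv|^{\bz+1}\in W^{1,2}_\loc(\Omega)$, while the second bracket is $\le(|Dv|^2+\ez)^{(\bz+1)/2}\le(|Dv|^2+1)^{(\bz+1)/2}\in L^\fz(K)$; the product is an $\ez$-independent $L^1(K)$ function. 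In both cases the integrand converges a.e.\ as $\ez\to0^+$: to $|Dv|^{2\bz-2}(D^2v\,Dv\cdot Dv)(Dv\cdot D\psi)$ on $\{Dv\ne0\}$ by continuity, and to $0$ on $\{Dv=0\}$, where moreover the target integrand vanishes a.e.\ because $D|Dv|^{\bz+1}=0$ a.e.\ on $\{|Dv|^{\bz+1}=0\}=\{Dv=0\}$. By dominated convergence, and since $[D|Dv|^{\bz+1}\cdot Dv]|Dv|^{\bz-1}=(\bz+1)|Dv|^{2\bz-2}(D^2v\,Dv\cdot Dv)$ on $\{Dv\ne0\}$,
\begin{align*}
I_3^\ez\to-\bz\int_\Omega|Dv|^{2\bz-2}(D^2v\,Dv\cdot Dv)(Dv\cdot D\psi)\,dx=-\frac{\bz}{\bz+1}\int_\Omega\big[D|Dv|^{\bz+1}\cdot Dv\big](Dv\cdot D\psi)|Dv|^{\bz-1}\,dx ,
\end{align*}
which is the third term of \eqref{xe1.5}. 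Summing the three limits finishes the proof. The main obstacle is precisely this estimate for $I_3^\ez$ when $\bz\in(-1,0)$: one must absorb the singular weight $(|Dv|^2+\ez)^{\bz-1}$ into the combination $|Dv|^{\bz-1}D^2v\,Dv$, which is exactly the quantity that the hypothesis $|Dv|^{\bz+1}\in W^{1,2}_\loc$ keeps bounded in $L^2$.
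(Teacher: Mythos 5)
Your proof is correct and follows essentially the same route as the paper: apply Lemma \ref{lem3.2} and pass to the limit $\ez\to0$ term by term in \eqref{xe3.2}, with the third term being the only delicate one. The only difference is cosmetic: for that third term the paper pairs the weak $L^2_\loc$ convergence of $D(|Dv|^2+\ez)^{(\bz+1)/2}$ against the strong $L^2_\loc$ convergence of $(|Dv|^2+\ez)^{(\bz-1)/2}Dv\otimes Dv$, whereas you substitute the pointwise formula $D(|Dv|^2+\ez)^{(\bz+1)/2}=(\bz+1)(|Dv|^2+\ez)^{(\bz-1)/2}D^2v\,Dv$ and close with dominated convergence using the same bound $(|Dv|^2+\ez)^{(\bz-1)/2}|D^2v\,Dv|\le\frac1{\bz+1}|D|Dv|^{\bz+1}|$ that the paper also exploits; both are valid and rely on the same key estimate provided by the hypothesis $|Dv|^{\bz+1}\in W^{1,2}_\loc$.
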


\begin{proof} The case $\bz=0$ is easy. The case $\bz\ne0$ would follow  if we could let  $\ez\to 0$ in \eqref{xe3.2} by Definition 1.3. To this end, it suffices to  { build} up the following convergence.

Firstly, since  $(|Dv|^2+1)^{\bz+1}\in L^q_\loc(\Omega)$ for $1<q<\fz$, by the Lebesgue  dominated convergence one has
$$\mbox{$(|Dv|^2+\ez)^{{\bz }}|Dv|^{ 2}\to |Dv| ^{{2\bz +2}}$  and $ (|Dv|^2+\ez)^{\bz}Dv\otimes Dv \to   |Dv| ^{2\bz}Dv\otimes Dv $
in $L^q_\loc(\Omega)$}.
$$
Similarly,
$$\mbox{$ (|Dv|^2+\ez)^{(\bz-1)/2}Dv\otimes Dv \to   |Dv| ^{(\bz-1)/2}Dv\otimes Dv $
in $L^q_\loc(\Omega)$}.
$$
Moreover, we observe that
\begin{equation}
                        \label{eq10.41}
 D(|Dv|^2+\ez)^{(\bz+1)/2}\to D|Dv|^{\bz+1}\text{ weakly in } L^2_\loc(\Omega).
\end{equation}
Indeed, when $\bz>0$,  since $v\in C^2(\Omega)$ one has
$$
D(|Dv|^2+\ez)^{(\bz+1)/2}= C(\bz)(|Dv|^2+\ez)^{( \bz-1)/2} D^2v Dv\in L^2_\loc(\Omega) \mbox{ uniformly in $\ez\in(0,1)$}
$$
and hence
 $D(|Dv|^2+\ez)^{(\bz+1)/2} \to D|Dv|^{\bz+1}$ { weakly} in $L^2_\loc(\Omega)$.
When $-1<\bz<0$, by the assumption $D|Du|^{\bz+1}\in L^2_\loc(\Omega)$ we have
$$
D(|Dv|^2+\ez)^{(\bz+1)/2}= C(\bz)(|Dv|^2+\ez)^{(\bz-1)/2}|Dv|^{1-\bz} D|Dv|^{\bz+1}\in L^2_\loc(\Omega) \mbox{ uniformly in $\ez\in(0,1)$}.
$$
Thus, together with  $(|Dv|^2+\ez)^{(\bz+1)/2}\to |Dv|^{\bz+1}$ weakly in $L^2_\loc(\Omega)$, we conclude \eqref{eq10.41} as desired.
\end{proof}

\section{Proof of Theorem \ref{thm1.3}}
            \label{sec5}

\begin{proof}[Proof of Theorem \ref{thm1.3}]
Let $u$ be any $\fz$-harmonic function   in planar domain $\Omega$. Since $u\in C^{0,1}_{{ \text{loc}}} (\Omega)$ and $|Du|^{\bz+1}\in W^{1,2}_\loc(\Omega)$,  by  Definition \ref{def1.2}, the distributional Jacobi $ -\det D[|Du  |^\beta Du ]$  is {well} defined. We proceed as below to show that $ -\det D[|Du  |^\beta Du ]\in \cm(\Omega)$
with the lower bound \eqref{xe1.7} and the upper bound \eqref{xe1.8}.

\medskip

 {\bf Step 1.} Given any smooth subdomain
$U\Subset\Omega$, for any $\ez\in(0,1)$ denote by  $u^\ez\in C^\fz(U)\cap C^0(\overline U)$ the
unique solution to the equation
$$\mbox{ ${\rm div}(e^{\frac1{2\ez} |Du^\ez|^2}Du^\ez)= \frac 1 \ez e^{\frac1{2\ez} |Du^\ez|^2}(\Delta_\fz u^\ez+\ez\Delta u^\ez)=0$ with $u^\ez=u$ on $\partial U$. }  $$
It was shown in \cite{e03}
 $$
\limsup_{\ez\to0}\|Du^\ez\|_{L^\fz(V)}\le \|Du\|_{L^\fz(U)}\quad\forall V\Subset U.$$
By \cite{kzz}, for any $\bz>-1$,
$$\mbox{$|Du^\ez|^{\bz+1}\in W^{1,2}_\loc(U)$ uniformly in $\ez>0$}$$
and
$$\lim_{\ez\to0}\|Du^\ez-Du\|_{L^q(V)}\to 0\quad\forall 1<q<\fz,\ \forall V\Subset U.$$
Thus,  $|Du|^{\bz+1}\in W^{1,2}_\loc(U)$ and {along a subsequence} $ |Du^\ez|^{\bz+1}\to |Du|^{\bz+1}$ {strongly} in
$ L^2_\loc(U)$ as $\ez\to0$.

 In view of   Definition \ref{def1.2}, the distributional Jacobi $ -\det D\big[ |Du^\ez |^\beta Du^\ez \big]$  is  well  defined.  By Lemma \ref{lem4.2}, one has
\begin{align}\label{xe5.1}
\lim_{\dz\to0} \int_U -\det D\big[(|Du^\ez|^2+\dz)^{\bz/2} Du^\ez\big]\psi\,dx
= \int_U -\det D\big[|Du^\ez|^\bz Du^\ez\big]\psi\,dx   \quad\forall \psi\in C^\fz_c(U).
\end{align}
By Lemma \ref{lem4.1} we know that
\begin{align}\label{xe5.2}
\lim_{\ez\to0}\int_U -\det D\big[|Du^\ez|^\bz Du^\ez\big]\psi\,dx=\int_U -\det D\big[|Du |^\bz Du \big]\psi\,dx
\quad\forall \psi\in C^\fz_c(U).
\end{align}

\medskip

{\bf Step 2.} By Lemma \ref{lem3.6} and  ${\bdz_\fz u^\ez} + \ez\bdz u^\ez =0$ in $U$, we have
\begin{align*}
\frac12[|D^2u^\ez|^2-(\bdz u^\ez)^2] |Du^\ez|^{2}
= |D^2uDu^\ez|^2-\bdz_\fz u^\ez \bdz u^\ez =|D^2uDu^\ez|^2+\frac1\ez(\bdz_\fz u^\ez)^2   \quad{\rm in}\ U.
\end{align*}

For $\dz>0$, by  this and Lemma \ref{lem3.7}, we obtain
\begin{align}  \label{xe5.3}
&-\det D\big[(|Du^\ez|^2+\dz)^{{\bz/2}}Du^\ez\big] \\
&\quad =\frac{1}{2}(|Du^\ez|^2+\dz)^{\bz }\big[|D^2u^\ez|^2-(\bdz u^\ez)^2\big]
+\bz (|Du^\ez|^2+\dz)^{{\bz-1}}\big[|D^2u^\ez Du^\ez|^2-\bdz u^\ez\bdz_\fz u^\ez\big] \nonumber\\
 &\quad\ge  (\bz+1) (|Du^\ez|^2+\dz)^{{\bz-1}}\big[|D^2u^\ez Du^\ez|^2
+\frac1\ez(\bdz_\fz  u^\ez )^2  \big] \nonumber\\
&\quad = \frac1{\bz+1}
  |D  (|D u^\ez|^2+\dz)^{(\bz+1)/2}|^2 +(\bz+1)\frac1\ez (|D u^\ez|^2+\dz)^{\bz-1} {(\bdz_\fz  u^\ez )^2 }
\quad{\rm in}\  U . \nonumber
  \end{align}

For any $0\le \psi\in C_c^\fz(U)$, by Lemma \ref{lem4.2} one has
\begin{align} \label{xe5.4}
&\int_U-\det D\big[ |Du^\ez |^ {\beta}Du^\ez \big] \psi\,dx\\
&\quad=\lim_{\dz\to0}\int_U-\det D\big[(|Du^\ez |^2+\dz)^{\beta/2}Du^\ez \big] \psi\,dx \nonumber  \\
&\quad \ge \frac1{\bz+1}\liminf_{\dz\to0}\int_U
  \Big[|D  (|D u^\ez|^2+\dz)^{(\bz+1)/2}  |^2 +(\bz+1)\frac1\ez (|D u^\ez|^2+\dz)^{\bz-1} {(\bdz_\fz  u^\ez )^2 }
\psi\Big]\,dx\nonumber\\
&\quad\ge \frac1{\bz+1} \int_U\Big[
  |D   |D u^\ez| ^{ \bz+1 }  |^2 +(\bz+1)\frac1\ez  |D u^\ez| ^{2\bz-2} {(\bdz_\fz  u^\ez )^2 }\psi \Big]\,dx, \nonumber
\end{align}
where, in the last inequality, we used that
$ (|D u^\ez|^2+\dz)^{(\bz+1)/2}\to |D u^\ez| ^{ \bz+1 } $ weakly in $W^{1,2}_\loc(U)$ as $\dz\to0$, and also that
 $(|D u^\ez|^2+\dz)^{\bz-1}  (\bdz_\fz  u^\ez )^2\to |D u^\ez| ^{2\bz-2} {(\bdz_\fz  u^\ez )^2 }$
 almost everywhere and have a dominant function $|D u^\ez| ^{2\bz+2} |D^2 u^\ez|^2\in L^1_\loc(U)$.

Letting $\ez\to0$, since  $|D u^\ez| ^{ \bz+1 } \to |Du|^{\bz+1}$ weakly in $W^{1,2}_\loc(U)$, we further have
\begin{align*}
\int_U-\det D[ |Du |^ {\beta}Du  ] \psi\,dx
&
\ge
\liminf_{\ez\to0 }
\frac1{\bz+1}\int_U |D|Du^\ez |^{\bz+1}|^2\psi\,dx\\
&\ge \frac1{\bz+1}\int_U |D|Du |^{\bz+1}|^2\psi\,dx\quad\forall 0\le \psi\in C_c^\fz(U),\nonumber
\end{align*}
 which gives the lower bound \eqref{xe1.7}.

\medskip

{\bf Step 3.} For any  $ \phi\in C_c^\fz(U)$ and $\dz\in[0,1)$, we have
\begin{align}\label{xe5.6}
\int_U-\det D\big[(|Du^\ez|^2+\dz)^{{\bz/2}}Du^\ez\big] \phi^2 \,dx
 &\le C   \frac{1+\bz^2}{1+ \bz} \int_U (|Du^\ez |^2+\dz)^{\bz+1}  [|\phi D^2\phi|+|D\phi|^2] \,dx.
  \end{align}
 Indeed,   by Lemma \ref{lem3.2} for $\dz\in(0,1]$  and Definition \ref{def1.2} for $\dz=0$, and by Young's inequality, one has
\begin{align*}
&\int_U-\det D\big[(|Du^\ez|^2+\dz)^{{\bz/2}}Du^\ez\big] \phi^2 \,dx\\
&\quad= -\frac12\int_U(|Du^\ez|^2+\dz)^{\bz } (D^2\phi^2 Du^\ez\cdot Du^\ez) \,dx
+\frac{1}{2\beta+2}\int_U(|Du^\ez|^2+\dz)^{\beta+1} \bdz\phi^2\,dx\nonumber\\
&\quad\quad-\frac{\beta}{2\beta+2}\int_U(|Du^\ez|^2+\dz)^{\frac{\beta-1}{2}}\big[D(|Du^\ez|^2+\dz)^{\frac{\bz+1}{2} }\cdot D u^\ez\big] (Du^\ez\cdot D\phi^2) \,dx\nonumber\\
 &\quad\le C( \frac12+\frac1{2+2\bz}+\frac{\bz^2}{2+2\bz})\int (|Du^\ez |^2+\dz)^{\bz+1}  \big[|\phi D^2\phi|+|D\phi|^2\big] \,dx\nonumber\\
&\quad\quad
+\frac{1}{ 2\beta+2}\int_U|D(|Du^\ez|^2+\dz)^{\frac{\bz+1}{2} }|^2 \phi^2  \,dx.\nonumber
 \end{align*}
Applying \eqref{xe5.3} for $\dz\in(0,1]$ and \eqref{xe5.4} for $\dz=0$, one has
$$\frac{1}{ 2\beta+2}\int_U|D(|Du^\ez|^2+\dz)^{\frac{\bz+1}{2} }|^2 \phi^2  \,dx\le \frac12
\int_U-\det D[(|Du^\ez|^2+\dz)^{{\bz/2}}Du^\ez] \phi^2 \,dx,$$
 and therefore,  we get   \eqref{xe5.6}.

{\bf Step 4.}  By \eqref{xe5.3} and \eqref{xe5.6}, we know that
\begin{align}
\label{xe5.7}\mbox{$0\le -\det D\big[(|Du^\ez |^2+\dz)^{\beta/2}Du^\ez \big] \in L^1_\loc(U)$ uniformly in $\dz\in(0,1]$.}
\end{align}
By \eqref{xe5.7}, \eqref{xe5.1}, and a density argument, we  know that
  $$ \lim_{\dz \to0}\int_U-\det D\big[(|Du^\ez |^2+\dz )^{\beta/2}Du^\ez \big]\psi\,dx$$
always exists
 for all $\psi\in C_c^0(U)$, and  is denoted by $\mu^\ez(\psi)$.
Moreover, $\mu^\ez$ is a nonnegative Radon measure, i.e.,  $0\le \mu^\ez \in \cm(U)$, and
$-\det D\big[(|Du^\ez |^2+\dz )^{\beta/2}Du^\ez \big] \,dx$ converges to $
\mu^\ez $  in the weak-$\star$ sense in $\cm(U)$ as $\dz\to0$.
Note that \eqref{xe5.1}  implies  that $\mu^\ez$ is induced by the distribution
$-\det D[ |Du^\ez |^ {\beta }Du^\ez ]$ uniquely.  Therefore, we can view
$-\det D[ |Du^\ez |^ {\beta }Du^\ez ]$ as the measure $\mu^\ez$.

Moreover,   by \   \eqref{xe5.6} with $\dz=0$,  given any subdomain $V\Subset U$, by a suitable choice of test function $\phi$ we also have
$$\|-\det D[ |Du^\ez |^ {\beta }Du^\ez ]\|(V)\le C\frac1{[\dist(V,\partial U)]^2} \frac{1+\bz^2}{1+ \bz} \int_W  |Du^\ez |  ^{2+2\bz}    \,dx,$$
where  $V\Subset W\Subset U$ with $\dist(W,\partial U)=\dist(V,\partial W)=\frac12\dist(V,\partial U)$.
Since $|Du^\ez |\in L^\fz (W)$  uniformly in
 $\ez \in (0,\ez_V)$ for some $\ez_V >0$,   we have
 $\|-\det D[ |Du^\ez |^ {\beta }Du^\ez ]\|(V)$ is  bounded uniformly in  $\ez \in (0,\ez_V)$.
Since
$$
0\le -\det D\big[ |Du^\ez |^ {\beta }Du^\ez \big]\in \cm(U),
$$
by \eqref{xe5.2} and a density argument we  know that
  $$ \lim_{\ez \to0}\int_U-\det D[ |Du^\ez | ^{\beta}Du^\ez ]\psi\,dx$$
always exists
 for all $\psi\in C_c^0(U)$, and is denoted by $\mu (\psi)$.   Moreover, one has that
$0\le \mu \in \cm(U)$ and
$-\det D[ |Du^\ez | ^{\beta}Du^\ez ]\,dx$ converges to  $\mu$ in the weak-$\star$ sense in $\cm(U)$ as $\ez\to0$.
By \eqref{xe5.2} we know that $\mu$ is induced by the distribution
 $-\det D[ |Du  | ^{\beta}Du  ]$ uniquely, and hence we can view  $-\det D[ |Du  | ^{\beta}Du  ]$ as $\mu$.

By the arbitrariness of $U$ we know that $-\det D[ |Du  | ^{\beta}Du  ]\in \cm(\Omega)$.
The upper bound \eqref{xe1.8} follows   from   \eqref{xe1.6} and  a suitable choice  of test functions $0\le \psi\in C^\fz_c(\Omega)$.
\end{proof}

\section{Proofs of  Theorems \ref{thm1.4} and  \ref{thm1.5}}                \label{sec6}

Given $p\in(1,\fz)$, let $u_p $ be any non-constant $p$-harmonic function  in a planar domain $\Omega$. For $\bz>-1$, one has $|Du_p|^\bz Du_p\in W^{1,2}_\loc(\Omega)$ and hence $-\det D[ |Du_p | ^{\bz }Du_p ]\in L^1_\loc(\Omega)$. Moreover,
we know that $E_{u_p}:=\{x\in\Omega, Du_p(x)=0\}$ is always discrete and hence is a null set,
 and $u\in C^\fz(\Omega\setminus E_{u_p})$. See  \cite{bi87,m88}.

\begin{lem}\label{lem6.1}
  Let  $\bz>-1$.
 Then
\begin{align}\label{xe6.1}
-\det D\big[ |Du_p | ^{\bz }Du_p \big]
& = \frac1{\bz+1}
  |D  |Du_p|^{\bz+1}  |^2+(\bz+1)(p-2) |Du_p|^{2\bz}\frac{(\bdz_\fz  u )^2 }{ |Du_p|^4 }
\quad{\rm in}\ \Omega\backslash E_{u_p}.
  \end{align}
 Consequently, if $p=2$, then
$$-\det D\big[ |Du_p | ^{\bz }Du_p \big]  =\frac1{\bz+1}|D|Du_p|^{\bz+1}|^2  \quad{\rm in}\ \Omega\backslash E_{u_p}; $$
if $p>2$, then
\begin{align}\label{xe6.2}
-\det D\big[ |Du_p | ^{\bz }Du_p \big]  \ge  \frac1{\bz+1}
  |D  |Du_p|^{\bz+1}  |^2
\quad{\rm in}\ \Omega\backslash E_{u_p};
  \end{align}
 if $1<p<2$, then
\begin{align}\label{xe6.3}\frac{p-1}{\bz+1} |D  |Du_p|^{\bz+1}  |^2
\le -\det D\big[ |Du_p | ^{\bz }Du_p \big]  \le  \frac1{\bz+1} |D  |Du_p|^{\bz+1}  |^2
    \quad{\rm in}\ \Omega\backslash E_{u_p}.
\end{align}

\end{lem}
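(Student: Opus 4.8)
The plan is to prove the pointwise identity \eqref{xe6.1} on the open set $\Omega\setminus E_{u_p}$, on which $u_p\in C^\fz$ and $|Du_p|>0$; since $E_{u_p}$ is discrete, hence Lebesgue-null, and $-\det D[|Du_p|^\bz Du_p]\in L^1_\loc(\Omega)$ is already known, this is precisely what is asserted, and the three stated consequences then follow from \eqref{xe6.1} by elementary algebra. (The case $\bz=0$ is in \cite{ll}; the argument below is uniform in $\bz>-1$.)

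On $\Omega\setminus E_{u_p}$ I would apply Lemma \ref{lem3.7} with $\ez=0$, which is legitimate there since $|Du_p|>0$, to obtain
\begin{align*}
-\det D[|Du_p|^\bz Du_p]
=\frac12|Du_p|^{2\bz}\big[|D^2u_p|^2-(\bdz u_p)^2\big]
+\bz|Du_p|^{2\bz-2}\big[|D^2u_pDu_p|^2-\bdz u_p\,\bdz_\fz u_p\big].
\end{align*}
I would then substitute the fundamental identity \eqref{xe3.8} of Lemma \ref{lem3.6}, namely
\begin{align*}
|D^2u_pDu_p|^2-\bdz u_p\,\bdz_\fz u_p=\frac12\big[|D^2u_p|^2-(\bdz u_p)^2\big]|Du_p|^2,
\end{align*}
into the first bracket; the two terms merge and give
\begin{align*}
-\det D[|Du_p|^\bz Du_p]=(\bz+1)|Du_p|^{2\bz-2}\big[|D^2u_pDu_p|^2-\bdz u_p\,\bdz_\fz u_p\big].
\end{align*}

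Next I would insert the $p$-Laplace equation in expanded form on $\Omega\setminus E_{u_p}$, namely $|Du_p|^2\bdz u_p+(p-2)\bdz_\fz u_p=0$, to replace $-\bdz u_p\,\bdz_\fz u_p$ by $(p-2)|Du_p|^{-2}(\bdz_\fz u_p)^2$; together with the chain-rule identity $|D|Du_p|^{\bz+1}|^2=(\bz+1)^2|Du_p|^{2\bz-2}|D^2u_pDu_p|^2$ this yields exactly \eqref{xe6.1}. The consequences are then immediate: when $p=2$ the last term of \eqref{xe6.1} vanishes; when $p>2$ it is nonnegative (since $\bz+1>0$), giving \eqref{xe6.2}; when $1<p<2$ it is nonpositive, giving the upper bound in \eqref{xe6.3}, while the lower bound there, after dividing out the positive factor $(\bz+1)(2-p)$, reduces to the Cauchy--Schwarz inequality $(\bdz_\fz u_p)^2=\big((D^2u_pDu_p)\cdot Du_p\big)^2\le|D^2u_pDu_p|^2\,|Du_p|^2$. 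There is no genuine obstacle here; the only points requiring care are the bookkeeping of the powers of $|Du_p|$ and of signs, together with the harmless remark that it suffices to argue on the full-measure open set $\Omega\setminus E_{u_p}$.
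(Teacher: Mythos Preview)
Your proposal is correct and follows essentially the same route as the paper: apply Lemma~\ref{lem3.7} with $\ez=0$ on $\Omega\setminus E_{u_p}$, use the fundamental identity \eqref{xe3.8} to collapse the two terms into $(\bz+1)|Du_p|^{2\bz-2}[|D^2u_pDu_p|^2-\bdz u_p\,\bdz_\fz u_p]$, substitute the expanded $p$-Laplace equation, and read off the consequences (with Cauchy--Schwarz for the lower bound when $1<p<2$).
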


\begin{proof}[Proof of Lemma \ref{lem6.1}]
In $\Omega\setminus E_{u_p}$,
applying  Lemma \ref{lem3.7}, we have
\begin{align*}
&-\det D\big[ |Du_p | ^{\bz }Du_p \big]
=\frac{1}{2} |Du_p | ^{2\bz }\big[|D^2 u_p |^2-(\bdz  u_p )^2\big]+
 \bz  |Du_p | ^{2(\bz-1)}\big[|D^2  u _p Du_p  |^2-\bdz  u _p \bdz_\fz  u \big].
 \end{align*}
Applying  \eqref{xe3.8}  to $u$, we have
\begin{align*}
\frac12[|D^2u_p|^2-(\bdz u_p)^2] =|Du_p|^{-2}[|D^2u_p Du_p|^2-\bdz_\fz u_p \bdz u_p]\quad{\rm in}\ \Omega\backslash E_{u_p},
\end{align*}
and hence
 \begin{align*}
-\det D[ |Du_p | ^{\bz }Du_p ]&=(\bz+1)
 |Du_p | ^{2(\bz-1)} [|D^2  u_p  Du_p  |^2-\bdz  u_p  \bdz_\fz  u_p ] \quad{\rm in}\ \Omega\backslash E_{u_p}.
  \end{align*}

Note
\begin{align*}
\bdz u_p=-
(p-2)\frac{\bdz_\fz u_p}{|Du_p|^2} \quad{\rm in}\ \Omega\backslash E_{u_p}.
\end{align*}
For $\bz>-1$, one gets {\eqref{xe6.1}.}
  When $2<p<\fz$, \eqref{xe6.1} gives {\eqref{xe6.2}}.
When $1<p<2$ and $\bz>-1$,  since
$$ |D^2  u_p  Du_p  |^2-\bdz  u  \bdz_\fz  u  =|D^2  u_p  Du_p  |^2+(p-2)  |Du_p | ^{-2} (\bdz_\fz  u_p )^2
\ge (p-1)|D^2  u _p Du_p  |^2 \quad{\rm in}\ \Omega\backslash E_{u_p},$$
one has
$$-\det D\big[ |Du_p | ^{\bz }Du_p \big]\ge
    \frac{p-1}{\bz+1} |D  |Du_p|^{\bz+1}  |^2 \quad{\rm in}\ \Omega\backslash E_{u_p} $$
    as desired.
\end{proof}

\begin{lem}\label{lemma6.2} For any $\phi\in C_c^\fz(\Omega)$, one has
\begin{align}\label{xe6.6}
 \int_\Omega-\det D\big[|Du_p|^{\beta}Du_p\big] \phi^2\,dx
&\le C \Big[1+ \frac{1}{1+\bz} + \frac1{p-1}\frac{\bz^2}{\bz+1} \Big]
\int_\Omega|Du_p|^{2\beta+2} \big[|\phi D^2\phi |+|D\phi|^2\big]  \,dx.
 \end{align}
\end{lem}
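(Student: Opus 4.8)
The plan is to run the same Caccioppoli–absorption scheme used for \eqref{xe5.6} in the proof of Theorem \ref{thm1.3}, but now closing the argument through the sharp lower bound of Lemma \ref{lem6.1}. Since $|Du_p|^\bz Du_p\in W^{1,2}_\loc(\Omega)$, Lemma \ref{lem3.1} (equivalently Definition \ref{def1.2}) applies with $v=u_p$; writing $J:=-\det D\big[|Du_p|^\bz Du_p\big]$, which by Lemma \ref{lem6.1} is a nonnegative function in $L^1_\loc(\Omega)$, the identity \eqref{xe3.1} with test function $\psi=\phi^2$ reads $\int_\Omega J\phi^2\,dx=T_1+T_2+T_3$ with
\begin{align*}
T_1&=-\frac12\int_\Omega|Du_p|^{2\bz}(D^2\phi^2\,Du_p\cdot Du_p)\,dx,\qquad
T_2=\frac{1}{2\bz+2}\int_\Omega|Du_p|^{2\bz+2}\bdz\phi^2\,dx,\\
T_3&=-\frac{\bz}{\bz+1}\int_\Omega\big[D|Du_p|^{\bz+1}\cdot Du_p\big](Du_p\cdot D\phi^2)\,|Du_p|^{\bz-1}\,dx.
\end{align*}
Since $|D^2\phi^2|+|\bdz\phi^2|\le C(|\phi D^2\phi|+|D\phi|^2)$, the first two terms satisfy $|T_1|+|T_2|\le C(1+\frac1{\bz+1})\int_\Omega|Du_p|^{2\bz+2}(|\phi D^2\phi|+|D\phi|^2)\,dx$, and this integral is finite because $|Du_p|^{\bz+1}\in W^{1,2}_\loc(\Omega)$, hence $|Du_p|^{2\bz+2}\in L^1_\loc(\Omega)$.

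For $T_3$ (assume $\bz\neq0$, otherwise $T_3=0$ and the bound is trivial) I would use that $u_p\in C^\fz(\Omega\setminus E_{u_p})$ with $E_{u_p}$ discrete, hence Lebesgue-null, so that $D|Du_p|^{\bz+1}\cdot Du_p=(\bz+1)|Du_p|^{\bz-1}\bdz_\fz u_p$ holds almost everywhere. Substituting and using $Du_p\cdot D\phi^2=2\phi\,Du_p\cdot D\phi$ gives
$$T_3=-2\bz\int_\Omega|Du_p|^{2\bz-2}\,\bdz_\fz u_p\,\phi\,(Du_p\cdot D\phi)\,dx,\qquad
|T_3|\le 2|\bz|\int_\Omega|Du_p|^{2\bz-1}\,|\bdz_\fz u_p|\,\phi\,|D\phi|\,dx.$$
Splitting $|Du_p|^{2\bz-1}|\bdz_\fz u_p|\,\phi|D\phi|=\big(|Du_p|^{\bz-2}|\bdz_\fz u_p|\,\phi\big)\big(|Du_p|^{\bz+1}|D\phi|\big)$ and applying Young's inequality, for any $\dz>0$,
$$|T_3|\le |\bz|\dz\int_\Omega|Du_p|^{2\bz-4}(\bdz_\fz u_p)^2\phi^2\,dx+\frac{|\bz|}{\dz}\int_\Omega|Du_p|^{2\bz+2}|D\phi|^2\,dx.$$

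The decisive step is to control $\int_\Omega|Du_p|^{2\bz-4}(\bdz_\fz u_p)^2\phi^2\,dx$ by $\int_\Omega J\phi^2\,dx$ with the right constant. For this I would invoke \eqref{xe6.1}: since $D|Du_p|^{\bz+1}=(\bz+1)|Du_p|^{\bz-1}D^2u_pDu_p$ in $\Omega\setminus E_{u_p}$ and $(\bdz_\fz u_p)^2=(D^2u_pDu_p\cdot Du_p)^2\le|D^2u_pDu_p|^2|Du_p|^2$ by Cauchy--Schwarz, the first term on the right of \eqref{xe6.1} already dominates $(\bz+1)|Du_p|^{2\bz-4}(\bdz_\fz u_p)^2$; adding the second term of \eqref{xe6.1} yields the pointwise bound
$$J\ \ge\ (\bz+1)(p-1)\,|Du_p|^{2\bz-4}(\bdz_\fz u_p)^2\qquad\text{a.e. in }\Omega,$$
valid for every $p>1$. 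Hence $\int_\Omega|Du_p|^{2\bz-4}(\bdz_\fz u_p)^2\phi^2\,dx\le\frac1{(\bz+1)(p-1)}\int_\Omega J\phi^2\,dx$; choosing $\dz=\frac{(\bz+1)(p-1)}{2|\bz|}$ makes the first term of the $|T_3|$ estimate equal $\frac12\int_\Omega J\phi^2\,dx$ and turns the second into $\frac{2\bz^2}{(\bz+1)(p-1)}\int_\Omega|Du_p|^{2\bz+2}|D\phi|^2\,dx$. Since $\int_\Omega J\phi^2\,dx<\fz$, I absorb $\frac12\int_\Omega J\phi^2\,dx$ into the left side and collect the coefficients of $T_1,T_2,T_3$, obtaining exactly
$$\int_\Omega J\phi^2\,dx\le C\Big[1+\frac1{1+\bz}+\frac1{p-1}\frac{\bz^2}{\bz+1}\Big]\int_\Omega|Du_p|^{2\bz+2}\big(|\phi D^2\phi|+|D\phi|^2\big)\,dx.$$
I expect the only genuine obstacle to be this absorption: one must pass $T_3$ through the weighted bound $J\gtrsim(\bz+1)(p-1)|Du_p|^{2\bz-4}(\bdz_\fz u_p)^2$ rather than through the cruder $J\gtrsim\frac{\min\{1,p-1\}}{\bz+1}|D|Du_p|^{\bz+1}|^2$, since only the former produces the stated $\frac1{p-1}$ behaviour uniformly for $p$ near $1$ and for large $p$. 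A minor point is that the pointwise identities above hold only on $\Omega\setminus E_{u_p}$, but since that set is null and the quantities of Definition \ref{def1.2} are a priori well defined and finite, this is harmless.
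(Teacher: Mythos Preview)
Your argument is correct and follows the same overall scheme as the paper --- apply Definition \ref{def1.2}/Lemma \ref{lem3.1} with $\psi=\phi^2$, bound the first two terms trivially, and absorb the third term using a pointwise lower bound on $J$ coming from Lemma \ref{lem6.1} --- so the outcome is the same inequality with the same constants.

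Where you differ is in the absorption step. The paper splits into three ranges $1<p\le2$, $2<p\le4$, $p\ge4$: for small $p$ it writes $I_3$ in terms of $|D|Du_p|^{\bz+1}|$ and absorbs via \eqref{xe6.3}; for large $p$ it writes $I_3$ in terms of $\bdz_\fz u_p$ and absorbs via the second summand $(\bz+1)(p-2)|Du_p|^{2\bz-4}(\bdz_\fz u_p)^2$ in \eqref{xe6.1}; the middle range is handled by either estimate since $p-1\sim1$ there. You instead extract the single pointwise bound
\[
J\ \ge\ (\bz+1)(p-1)\,|Du_p|^{2\bz-4}(\bdz_\fz u_p)^2
\]
by combining \emph{both} summands of \eqref{xe6.1} (using $(\bdz_\fz u_p)^2\le|D^2u_pDu_p|^2|Du_p|^2$ on the first one), and this lets you run one Young-inequality argument uniformly in $p>1$. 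This is a genuine simplification: it replaces the case analysis by a single sharper inequality and makes transparent why the factor is $1/(p-1)$ rather than $1/(p-2)$. The paper's version, on the other hand, keeps the two mechanisms (control through $|D|Du_p|^{\bz+1}|$ versus control through $\bdz_\fz u_p$) visible, which mirrors the two lower bounds in Lemma \ref{lem6.1}. Your remark that working on $\Omega\setminus E_{u_p}$ suffices, and that $\int_\Omega J\phi^2<\fz$ so absorption is legitimate, is also in order.
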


\begin{proof}
For all $\phi\in C_c^\fz(\Omega)$, write
\begin{align*}
\int_\Omega-\det D\big[|Du_p|^{\beta}Du_p\big] \phi^2\,dx
&=-\frac12\int_\Omega|Du_p|^{2\beta} (D^2\phi^2 Du_p\cdot Du_p) \,dx
+\frac{1}{2\beta+2}
\int_\Omega|Du_p|^{2\beta+2} \bdz\phi^2 \,dx\nonumber\\
&\quad-\frac{\beta}{{ \beta+1}}
\int_\Omega\big[D |Du_p| ^{\bz+1 }\cdot D u_p\big]
(Du_p\cdot D\phi^2)|Du_p|^{\beta-1} \,dx\nonumber\\
&=:I_1+I_2+I_3.
\end{align*}

Clearly,
\begin{align*}
I_1+I_2
&\le C (1+\frac1{1+\beta})\int_\Omega |Du_p|^{2+2\bz}\big[|\phi D^2\phi|+|D\phi|^2\big] \,dx.
\end{align*}
When $1<p\le 2$, by Young's inequality and \eqref{xe6.3} one has
\begin{align*}
  I_3
&\le \frac{p-1}{2+2\bz}\int_\Omega  |D |Du_p| ^{\bz+1 }|^2\phi^2\,dx+
\frac{4\beta^2}{(2+2\beta)(p-1)}
\int_\Omega  |Du_p|^{2+2\beta} |D\phi|^2\,dx\\
&\le \frac12 \int_\Omega-\det D\big[|Du_p|^{\beta}Du_p\big] \phi^2\,dx+  C
\frac{\beta^2}{(1+ \beta) (p-1)}
\int_\Omega |Du_p|^{2+2\beta} |D\phi|^2\,dx.
\end{align*}
When $2<p\le 4$, by \eqref{xe6.1}  similarly to the case $1<p\le 2$ one also has
\begin{align*}
  I_3
&\le \frac12 \int_\Omega-\det D\big[|Du_p|^{\beta}Du_p\big] \phi^2\,dx+  C
\frac{\beta^2}{(1+ \beta) (p-1)}
\int_\Omega |Du_p|^{2+2\beta} |D\phi|^2\,dx.
\end{align*}
When $p\ge 4$, by Young's inequality and \eqref{xe6.1}, one has
\begin{align*}
I_3
&\le \bz
\int_\Omega|\Delta_\fz u _ p\phi D\phi| |Du_p|^{2\beta-1} \,dx\\
&\le \frac{(p-2)(\bz+1)}2
\int_\Omega|\Delta_\fz u_p|^2 |Du_p|^{2\beta-4}\phi^2\,dx+\frac{8|\beta|^2}{(p-2)(\bz+1)} \int_\Omega | D\phi|^2|Du_p|^{2\beta+2} \,dx\\
&\le  \frac12 \int_\Omega-\det D[|Du_p|^{\beta}Du_p] \phi^2\,dx+  C
\frac{\beta^2}{(1+ \beta) (p-1)}
\int_\Omega |Du_p|^{2+2\beta} |D\phi|^2\,dx.
\end{align*}
We therefore obtain   \eqref{xe6.6}.
\end{proof}

\begin{proof}[Proof  of Theorem {\ref{thm1.4}}]
Theorem {\ref{thm1.4}} follows immediately from  Lemmas \ref{lem6.1} and \ref{lemma6.2}.
\end{proof}
\begin{proof}[Proof  of Theorem {\ref{thm1.5}}]
Given any bounded smooth domain $\Omega $ and $g\in \lip(\partial\Omega)$
for $1<p\le \fz$ denote by  $u_p$ the unique $p$-harmonic functions in $\Omega $ with boundary $g$.
Moreover $u_p\to u_\fz$ in $C^{0,\alpha}(\overline \Omega)$  for any $\alpha\in(0,1)$ and weakly in $W^{1,q}(\overline\Omega) $  for any $ 1<q<\fz$ as $p\to\fz$.  This is well known; see for example \cite{lM}.
For reader'{s} convenience, a proof is given below.
Since $u_\fz$  is the absolute minimizer with boundary $g$, we know that
$\|Du\|_{L^\fz(\Omega)}= \|g\|_{\lip(\partial\Omega)}$.
Moreover we may extend $g$  to $\Omega$ with the same Lipschitz norm via   the Mcshane extension.
For $1<p<\fz$, since $u_p$ is the minimizer and to see that
$$\|Du_p\|_{L^p(\Omega)}\le \|D  g\|_{L^p(\Omega)}\le  |\Omega|^{1/p}  \|g\|_{\lip(\partial\Omega)}. $$
Given any $1< q<\fz$, for $p>q$  by H\"older inequality we know that
 $\|Du_p\|_{L^q(\Omega)}\le  |\Omega|^{ 1/q }  \|g\|_{\lip(\partial\Omega)}$  and hence uniformly bounded.    Then $u_p\in C^{0,1-n/q}(\overline \Omega)$ for large $p>q$ uniformly. Thus
 $u_p$ converges to $u$ in $C^{0,1-n/q}(\overline\Omega)$ as $p\to\fz$ up to some subsequence.
Since $u_p$ is also {a} viscosity solution to $\Delta_\fz v+ \frac1{p-2}\Delta v|Dv|^2=0$ in $\Omega$ with boundary $g$, by the compactness of viscosity solution we see that $u$ is a viscosity solution to $\Delta_\fz v=0$.
Observing that $u$ and $u_\fz$ {satisfy} the same boundary {condition}, by {Jensen's} uniqueness {result,} we know that $u=u_\fz$.
Since $Du_p\in {L^q(\Omega)}$ for $p>q$ uniformly, we know that $Du_p$
converges weakly to $Du_\fz$ in $L^q(\Omega)$ as $p\to\fz$.

   Since $Du_p\in L^2(\Omega)$ uniformly in $p$, by Lemma \ref{lemma6.2}, we know that
$-\det  D^2u_p  \in L^1_\loc(\Omega)$ uniformly in $p>2$.
 By Lemma \ref{lem6.1}, $D|Du_p|\in L^2_\loc(\Omega)$  uniformly in $p>2$.
By the Sobolev embedding theorem, we know that  $|Du_p|$ converges to some function $h$ in $L^q_\loc
(\Omega)$
and weakly in $W^{1,2}_\loc(\Omega) $ as $ p\to\fz$.
By building up a flatness estimates similarly to  \cite[Lemma 2.7]{kzz} (here we omit the details; see \cite{ll}),  one has $h=|Du_\fz|$.
Since $Du_p\to Du_\fz$ weakly in $L^q(\Omega)$ with $q>1$, we deduce that
 $Du_p\to Du_\fz$ in $L^q_\loc(\Omega)$ as $p\to\fz$.

Applying Lemma \ref{lem4.1} we know that  $ |Du_p|^\beta Du_p\to |Du_\fz|^\beta Du_\fz$ in   $L^{  q}_\loc(\Omega)$ for any $1<q<\fz$ and moreover,
\begin{align}\label{xe6.7}\int_\Omega-\det D\big[|Du_\fz|^\beta Du_\fz\big]\psi\,dx=\lim_{p\to\fz}\int_\Omega-\det D\big[|Du_p|^\beta Du_p\big]\psi\,dx\quad \forall \psi\in C^\fz_c(\Omega).\end{align}
Lemma \ref{lemma6.2} yields that  $-\det D[|Du_p|^\beta Du_p]\in L^1(\Omega)$ uniformly in $p>2$.
By a density argument, we know that \eqref{xe6.7} holds for all  $\psi\in C^0_c(\Omega)$,
i.e.,
 $-\det D [ |Du_p|^\beta Du_p] \to -\det D [ |Du_\fz|^\beta Du_\fz]$  in the weak$-\star$ sense in $\cm(\Omega)$.
\end{proof}

\begin{rem}\rm
One could also prove Theorem \ref{thm1.3} via Lemma \ref{lem6.1}, Lemma \ref{lemma6.2},   and \eqref{xe6.7}.
\end{rem}

\renewcommand{\thesection}{Appendix A}
 \renewcommand{\thesubsection}{ A }
\newtheorem{lemapp}{Lemma \hspace{-0.15cm}}
\newtheorem{corapp}[lemapp] {Corollary \hspace{-0.15cm}}
\newtheorem{remapp}[lemapp]  {Remark  \hspace{-0.15cm}}
\newtheorem{defnapp}[lemapp]  {Definition  \hspace{-0.15cm}}
\renewcommand{\theequation}{A.\arabic{equation}}

\renewcommand{\thelemapp}{A.\arabic{lemapp}}

\section{Some sharpness in the plane}

 At the borderline case $\bz=-1$,   we have the following result, which will be used later.
\begin{lemapp}\label{lem-log}
Let $1<p<\fz$. If $u_p$ is non-constant $p$-harmonic function  in a   domain $\Omega\subset\mathbb R^2$, then
\begin{align*}
\Big|D\big[|Du_p|^{-1}Du_p\big]\Big|^2
=|D\log|Du_p||^2
 + (p-2)p \frac{(\bdz_\fz u_p)^2}{|Du_p|^4}  \ a.\,e. \nonumber
\end{align*}
In particular, if $p=2$,
$$
\Big|D\big[|Du_p|^{-1}Du_p\big]\Big|^2
= |D\log|Du_p||^2  \ a.\,e. ;$$
if $ p>2$,
$$
\Big|D\big[|Du_p|^{-1}Du_p\big]\Big|^2
\ge |D\log|Du_p||^2   \ a.\,e.;$$
and if $1<p<2$,
\begin{align*}
 |D\log|Du_p||^2 \ge  \Big|D\big[|Du_p|^{-1}Du_p\big]\Big|^2
&
\ge {(p-1)^2}  |D\log|Du_p||^2   \ a.\,e.
\end{align*}
\end{lemapp}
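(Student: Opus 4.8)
The plan is to reduce everything to a pointwise computation on the open set $\Omega\setminus E_{u_p}$, where $E_{u_p}=\{Du_p=0\}$. As recalled at the start of Section \ref{sec6} (see also \cite{bi87,m88}), $E_{u_p}$ is discrete, hence Lebesgue null, and $u_p\in C^\fz(\Omega\setminus E_{u_p})$ with $|Du_p|>0$ there; so it suffices to prove the identity classically on that set, after which ``a.e.'' on $\Omega$ is automatic. I would therefore fix a smooth $v$ with $|Dv|>0$ on an open set, set $F=|Dv|^{-1}Dv$, and differentiate $F_k=v_{x_k}/|Dv|$. Using $\partial_{x_j}|Dv|=|Dv|^{-1}(D^2vDv)_j$ together with $\sum_k v_{x_jx_k}v_{x_k}=(D^2vDv)_j$ and $\sum_k v_{x_k}^2=|Dv|^2$, the three resulting sums collapse to
\begin{align*}
\Big|D\big[|Dv|^{-1}Dv\big]\Big|^2=\frac{|D^2v|^2}{|Dv|^2}-\frac{|D^2vDv|^2}{|Dv|^4}.
\end{align*}
Since $D\log|Dv|=|Dv|^{-2}D^2vDv$, the last term is exactly $|D\log|Dv||^2$, so $|D[|Dv|^{-1}Dv]|^2=|D^2v|^2/|Dv|^2-|D\log|Dv||^2$. (Feeding this into the fundamental identity below one recovers $-\det D[|Dv|^{-1}Dv]=0$, which is also Lemma \ref{lem3.7} with $\bz=-1$, $\ez=0$; a useful sanity check, but not needed.)

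Next I would remove the unwanted Hessian term $|D^2v|^2/|Dv|^2$ using the fundamental identity \eqref{xe3.8} of Lemma \ref{lem3.6}, namely $|D^2vDv|^2-\bdz v\,\bdz_\fz v=\tfrac12\big[|D^2v|^2-(\bdz v)^2\big]|Dv|^2$, which rearranges to $|D^2v|^2/|Dv|^2=2|D\log|Dv||^2-2\bdz v\,\bdz_\fz v/|Dv|^4+(\bdz v)^2/|Dv|^2$. Substituting gives
\begin{align*}
\Big|D\big[|Dv|^{-1}Dv\big]\Big|^2=|D\log|Dv||^2-2\frac{\bdz v\,\bdz_\fz v}{|Dv|^4}+\frac{(\bdz v)^2}{|Dv|^2}.
\end{align*}

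Finally I would specialize $v=u_p$ and use that on $\Omega\setminus E_{u_p}$ the $p$-harmonic equation reads $\bdz u_p=-(p-2)|Du_p|^{-2}\bdz_\fz u_p$, so the last two terms collapse:
\begin{align*}
-2\frac{\bdz u_p\,\bdz_\fz u_p}{|Du_p|^4}+\frac{(\bdz u_p)^2}{|Du_p|^2}=\big[2(p-2)+(p-2)^2\big]\frac{(\bdz_\fz u_p)^2}{|Du_p|^{6}}=p(p-2)\frac{(\bdz_\fz u_p)^2}{|Du_p|^{6}},
\end{align*}
which is the asserted identity (this is the $\bz\to-1$ shadow of the extra term in \eqref{xe6.1}, where the diverging factor $\tfrac1{\bz+1}$ is exactly why one must look at $|D[\cdot]|^2$ rather than $-\det D[\cdot]$). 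The three cases then follow at once: for $p=2$ the extra term vanishes; for $p>2$ it is nonnegative, giving $|D[|Du_p|^{-1}Du_p]|^2\ge|D\log|Du_p||^2$; and for $1<p<2$ the coefficient $p(p-2)$ is negative, so $|D[|Du_p|^{-1}Du_p]|^2\le|D\log|Du_p||^2$, while the sharp lower bound with constant $(p-1)^2$ comes from $p(p-2)=(p-1)^2-1$ and the Cauchy--Schwarz inequality $(\bdz_\fz u_p)^2=(D^2u_pDu_p\cdot Du_p)^2\le|D^2u_pDu_p|^2|Du_p|^2$, i.e.\ $(\bdz_\fz u_p)^2/|Du_p|^{6}\le|D\log|Du_p||^2$.

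The computation is essentially mechanical once Lemma \ref{lem3.6} is in hand, so I do not expect a serious obstacle. The only genuine point of care is the reading of the identity pointwise on the full-measure open set $\Omega\setminus E_{u_p}$: near an isolated critical point the unit vector field $|Du_p|^{-1}Du_p$ need not even be $W^{1,1}$, so no distributional sense for $|D[\cdot]|^2$ is being claimed. If anything, the single ``idea'' is the choice to trade the uncontrolled term $|D^2v|^2$ for quantities built from $\bdz v$ and $\bdz_\fz v$, so that the $p$-harmonic equation closes the identity.
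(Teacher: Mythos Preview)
Your proof is correct and follows essentially the same route as the paper: work pointwise on $\Omega\setminus E_{u_p}$, compute $\big|D[|Dv|^{-1}Dv]\big|^2=|D^2v|^2/|Dv|^2-|D^2vDv|^2/|Dv|^4$, invoke the fundamental identity \eqref{xe3.8} of Lemma~\ref{lem3.6} to eliminate $|D^2v|^2$, and then use $\bdz u_p=-(p-2)|Du_p|^{-2}\bdz_\fz u_p$ to close the identity. Your treatment of the $1<p<2$ lower bound---applying Cauchy--Schwarz directly to $(\bdz_\fz u_p)^2\le |D^2u_pDu_p|^2|Du_p|^2$ in the form $|D\log|Du_p||^2+p(p-2)(\bdz_\fz u_p)^2/|Du_p|^6$---is in fact a touch more direct than the paper's route through $\tfrac12|D^2u_p|^2/|Du_p|^2$ and delivers the stated constant $(p-1)^2$ cleanly.
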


\begin{proof}
In $\Omega\backslash E_{u_p}$, one has
$$|D\log|Du_p||^2= \frac{|D ^2u_pDu_p|^2}{|Du_p|^4}
\le \frac{|D^2u_p|^2}{|Du_p|^2}  $$
and
$$
\Big|D\big[|Du_p|^{-1}Du_p\big]\Big|^2
=| |Du_p|^{-1}D^2u_p- |Du_p|^{-3} D^2u_p Du_p\otimes Du_p|^2 =\frac{|D^2u_p|^2}{|Du_p|^{2}}-\frac{|D^2u_pDu_p|^2}{|Du_p|^4} .
$$

Recall that
$$
   { |D^2u_p Du_p^2} -  {\Delta u_p \Delta_\fz u_p } =\frac12
   \big[|D^2u_p|^2-(\Delta u_p)^2\big]|Du_p|^2 \quad  \mbox{in $\Omega\backslash E_{u_p}$}.
   $$
Replacing $\Delta u_p$ with $(p-2)\frac{\Delta_\fz u_p}{|Du_p|^2}$,
  one has
$$
 \frac12\big[|D^2u_p|^2-(p-2)^2\frac{(\bdz_\fz u_p)^2}{|Du_p|^4} \big]
 =\frac{|D^2u_p Du_p|^2}{|Du_p|^2} -(p-2)\frac{(\bdz_\fz u_p)^2}{|Du_p|^4},
$$
or equivalently,
$$ |D^2u_p|^2 = 2\frac{|D^2u_p Du_p|^2}{|Du_p|^2}
 + (p-2)p \frac{(\bdz_\fz u_p)^2}{|Du_p|^4}.
$$
Thus,
$$
\Big|D\big[|Du_p|^{-1}Du_p\big]\Big|^2
=\frac{|D^2u_p Du_p|^2}{|Du_p|^4}
 + (p-2)p \frac{(\bdz_\fz u_p)^2}{|Du_p|^6}  =\frac12 \frac{|D^2u_p |^2}{|Du_p|^2}
 + \frac{(p-2)p}2 \frac{(\bdz_\fz u_p)^2}{|Du_p|^6}.
$$

When $p=2$, then
$$
\Big|D\big[|Du_p|^{-1}Du_p\big]\Big|^2
=\frac12\frac{|D^2u_p |^2}{|Du_p|^2}= \frac{|D^2u_pDu_p|^2}{|Du_p|^4} \quad \mbox{in $\Omega\backslash E_u$};$$
When $p>2$,  we have
$$
\Big|D\big[|Du_p|^{-1}Du_p\big]\Big|^2
\ge \frac12 \frac{|D^2u_p|^2}{|Du_p|^{2}}\ge \frac{|D^2u_p Du_p|^2}{|Du_p|^2},
$$
while when  $1<p<2$,
 we have
$$
\frac12\frac{|D^2u_p|^2}{|Du_p|^2}
\ge  \Big|D\big[|Du_p|^{-1}Du_p\big]\Big|^2
\ge \frac{{(p-1)^2} }{2}\frac{|D^2u_p|^2}{|Du_p|^2}  \ge \frac {(p-1)^2}2 \frac{|D^2u_p Du_p|^2}{|Du_p|^4}  .
$$
\end{proof}

For   $1<p<\fz$, we recall the extremal $p$-harmonic function  constructed by \cite[Section 7]{im89}.
 Here we keep notation {the} same as therein.
 Let \begin{equation}\label{xea.1}H(\xi)=\left(\frac{\xi}{|\xi|}+\ez \frac{|\xi|^3}{\xi^3} \right)|\xi|^{1/d} \quad\forall \xi\in\mathbb C
 \end{equation}
 with $$\frac1d= \frac12\left(-p+\sqrt{16(p-1)+(p-2)^2} \right)>0\quad \mbox{and}\quad \ez =\frac{1-d}{1+3d}.$$
If $p=2$, then $d=1$ and $\ez=0$ and hence $H(\xi)=\xi$. If $p\ne 2$, then $d>0$ and $\ez\ne0$, and $H$ is a quasiconformal homeomorphism on the whole plane.
According to \cite[Theorem 2 ]{im89}, $H(\xi)$ satisfies  \cite[(18)  with $n=1$]{im89}, that is,
\begin{equation}\label{xea.2} H_{\bar \xi}=\left(\frac12-\frac1p\right)
\left[ \frac{\xi}{\bar \xi } H_\xi+\frac{\bar \xi}{ \xi }\bar {H_\xi} \right], \end{equation}
where $H_\xi=\frac12(H_x-iH_y)$ and $H_{\bar \xi}=\frac12(H_x+iH_y)$ for $\xi=x+iy$.

Let $f(z)$ denote  the inverse of $H(\xi)$ in $\mathbb C$, so that $ f(H(\xi))=\xi$ and $H(f(z))=z$ for all $z,\xi\in\mathbb C$.
From \eqref{xea.2}, one deduces
$$ f_{\bar z}=\left(\frac1p-\frac12\right)\left[ \frac{f}{\bar f } \bar{f_z}+\frac{\bar f}{ f }f_z \right],$$
where $f_z=\frac12(f_x-if_y)$ and $f_{\bar z}=\frac12(f_x+if_y)$ for $z=x+iy$.
This then defines a $p$-harmonic function  $w$ in the whole plane
so that its complex derivative $w_z=f.$

We have the following properties.

\begin{lemapp}\label{lema.2}  One has   $\log|Dw|=\log|f|\notin W^{1,2}_\loc(\rr^2)$ and $|Dw|^{-1}Dw\notin W^{1,2}_\loc(\rr^2)$.
\end{lemapp}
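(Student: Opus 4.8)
The plan is to exploit the explicit formula $Dw = w_z = f$ where $f = H^{-1}$, and to analyze the behavior of $f$ near the origin using \eqref{xea.1}. First I would determine the size of $f$ and $Df$ near $z=0$. From \eqref{xea.1}, $H(\xi)$ behaves like $|\xi|^{1/d-1}\xi$ up to bounded multiplicative factors (the factor $\xi/|\xi| + \epsilon |\xi|^3/\xi^3$ is bounded above and below away from $0$ for $\epsilon\neq 1$, which holds since $d>0$), so $|H(\xi)|\approx |\xi|^{1/d}$. Inverting, $|f(z)| = |\xi| \approx |z|^{d}$ near $z=0$. Consequently $\log|Dw| = \log|f| \approx d\log|z|$ near the origin, and $|D\log|f||\approx |z|^{-1}$, so $|D\log|f||^2 \approx |z|^{-2}$, which is not locally integrable in $\rr^2$. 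This already gives $\log|Dw|\notin W^{1,2}_{\loc}(\rr^2)$, provided one justifies that $f$ is smooth away from $0$ (true, since $H$ is a smooth diffeomorphism off the origin and off $\xi=0$) and that the singularity at $z=0$ genuinely contributes — i.e., $\log|f|\in W^{1,2}_{\loc}$ would force the distributional gradient to agree with the pointwise one and hence $|z|^{-2}\in L^1_{\loc}$, a contradiction.

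For the second claim, I would invoke Lemma A.1 with $u_p = w$: since $p\ne 2$ in the interesting case (when $p=2$, $d=1$ and the two quantities coincide, but then $\log|f|\approx\log|z|$ still has non-$L^2$ gradient, giving the conclusion directly), Lemma A.1 gives, for $p>2$, the pointwise bound $|D[|Dw|^{-1}Dw]|^2 \ge |D\log|Dw||^2$ a.e., and for $1<p<2$, $|D[|Dw|^{-1}Dw]|^2 \ge (p-1)^2|D\log|Dw||^2$ a.e. In either case $|D[|Dw|^{-1}Dw]|^2\gtrsim |D\log|f||^2\approx |z|^{-2}$ near the origin, which is not locally integrable. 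Hence the pointwise derivative $D[|Dw|^{-1}Dw]$ (which exists a.e. and is smooth off $0$) fails to be in $L^2_{\loc}$; and as before, if $|Dw|^{-1}Dw$ were in $W^{1,2}_{\loc}(\rr^2)$ its distributional gradient would coincide with this pointwise one, giving a contradiction. This completes the proof.

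The main obstacle I anticipate is making rigorous the step "if $\log|f|$ (resp. $|Dw|^{-1}Dw$) were in $W^{1,2}_{\loc}$, its weak gradient would be the a.e.-defined pointwise gradient." The clean way is a capacity/removability argument: the point $\{0\}$ has zero $W^{1,2}$-capacity in $\rr^2$, so a function in $W^{1,2}_{\loc}(\rr^2)$ that is smooth on $\rr^2\setminus\{0\}$ has weak gradient equal to its classical gradient a.e.; then the non-integrability of $|z|^{-2}$ near $0$ directly contradicts membership in $L^2_{\loc}$. Alternatively, one tests against $\psi\in C_c^\infty$ supported near $0$ and integrates by parts on $\rr^2\setminus B(0,\delta)$, tracking the boundary term on $\partial B(0,\delta)$ as $\delta\to 0$; the point is that no distributional correction (a measure at $0$) can absorb a gradient whose pointwise $L^2$ norm already diverges. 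A secondary, more routine point is verifying the asymptotics $|f(z)|\approx|z|^d$ and $|Df(z)|\approx|z|^{d-1}$ carefully near $0$ from \eqref{xea.1} and \eqref{xea.2}; since $H$ is an explicit quasiconformal map, this is a direct computation once one notes that the bracketed prefactor in \eqref{xea.1} is comparable to a constant.
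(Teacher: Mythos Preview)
Your reduction of the second claim to the first via Lemma~\ref{lem-log} matches the paper exactly. For showing $\log|f|\notin W^{1,2}_\loc(\rr^2)$, however, you take a different route: you estimate $|D\log|f||$ pointwise via asymptotics near the origin, whereas the paper observes the exact homogeneity $f(tz)=t^d f(z)$ (immediate from $H(t\xi)=t^{1/d}H(\xi)$ in \eqref{xea.1}) and uses it to show that $\int_{|z|<R}|D\log|f||^2\,dz$ is invariant under dilation $R\mapsto tR$; if this integral were finite, sending $t\to0$ would force it to vanish, so $|f|$ would be constant, contradicting $f(tz)=t^d f(z)$.

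Both approaches work, but yours needs one additional observation you did not make explicit. From $|f(z)|\approx|z|^d$ alone you cannot conclude $|D\log|f||\gtrsim|z|^{-1}$, since asymptotic equivalences cannot in general be differentiated; and the estimate $|Df(z)|\approx|z|^{d-1}$ that you mention controls $|Df|$, not the possibly smaller $|D|f||$. What rescues the lower bound is again the exact homogeneity: writing $\log|f(z)|=d\log|z|+g(z/|z|)$ with $g$ smooth on $S^1$, the radial and tangential components of $D\log|f|$ are orthogonal, giving $|D\log|f||^2=(d^2+|g'|^2)|z|^{-2}\ge d^2|z|^{-2}$. Once you insert this, your argument is complete. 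Your remarks on removability of $\{0\}$ for $W^{1,2}$ (zero capacity) correctly handle the identification of weak and pointwise gradients, a point the paper leaves implicit.
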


\begin{proof}[Proof of Theorem \ref{lema.2}]
By Lemma  \eqref{lem-log}, we only need to prove
$\log|f|\notin W^{1,2}_\loc(\rr^2)$. We argue by contradiction.
 Assume that  $\log|f|\in W^{1,2}_\loc(\mathbb{C})$.
Note that $f(tz)=t^df(z)$ for any $t\ge 0$. A direct calculation implies that
\begin{equation}\label{xea.3}
D|f|(z)=t^{1-d}D|f|(tz),\quad D\log|f|(z)=\frac{D|f|(z)}{|f(z)|},\quad z\in \mathbb{C}\backslash\{f^{-1}(0)\},\ t\ge0,
\end{equation}
where $f^{-1}(0)=\{z\in \mathbb{C}: f(z)=0\}$. For each $R>0$, we know that $f^{-1}(0)\cap \{z\in \mathbb{C},|z|<R\}$ is discrete, and from $f(tz)=t^df(z)$, \eqref{xea.3} we conclude that
\begin{align*}
&\int_{|z|<R}|D\log|f|(z)|^2\,dz=\int_{|z|<R}\frac{|D|f|(z)|^2}{|f(z)|^2}\,dz
=\int_{|\xi|<tR}\frac{|D|f|(\frac{\xi}{t})|^2}{|f(\frac{\xi}{t})|^2}d\frac{\xi}{t}\\
&=\int_{|\xi|<tR}\frac{(\frac{1}{t})^{2d-2}|D|f|(\xi)|^2}{
(\frac{1}{t})^{2d}|f(\xi)|^2} \frac{1}{t^2} \,d\xi
=\int_{|\xi|<tR}|D\log |f| (\xi)|^2\,d\xi,\quad \forall t>0.
\end{align*}
Letting $t\to 0$ we conclude that
$ D \log |f|(z)= 0$ whenever $|z|<R$, and hence, by the arbitrariness of $R$,  for all $z\in\mathbb C$. Thus $|f|$ is a positive constant in the whole plane. This contradicts that $f(tz)=t^df(z)$ for all $t>0$ and $z\in\mathbb C$, where {we} recall that $d>0$.
\end{proof}

\begin{lemapp}\label{lema.3} One has \begin{equation}\label{xea.4}
\sup_{\mathbb C\setminus\{0\}} \frac{|f_{\bar z}|} {|f_{ z}|}=  \frac{|p-2|}p =\frac{K(p)-1}{K(p)+1} \quad\mbox{with }\quad
 K(p)=\max\left\{\frac1{p-1},p-1\right\}.\end{equation}
 In general, for $\bz>-1$,  writing $g=|f|^\bz f$, one has
 \begin{equation}\label{xea.5}\sup_{\mathbb C\setminus\{0\}} \frac{|g_{\bar z}|}{|g_z|}= \frac{K(p,\bz)-1}{K(p,\bz)+1} \quad\mbox{with }\quad
 K(p,\bz)=\max\left\{ \frac{p-1}{\bz+1},\frac{\bz+1}{p-1},
\bz+1,\frac1{\bz+1} \right\} .\end{equation}

\end{lemapp}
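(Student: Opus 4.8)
The strategy is to reduce both \eqref{xea.4} and \eqref{xea.5} to a single optimization over the unit circle, via the Beltrami identity already recorded for $f=w_z$. Set $\delta=\frac1p-\frac12$ and, on $\mathbb C\setminus(\{0\}\cup f^{-1}(0))$, put $\Lambda=f/\bar f$, $E=\overline{f_z}/f_z$ (both unimodular), $\lambda=\Lambda E$ and $\zeta=\Lambda\lambda=\Lambda^2E$. Dividing the identity $f_{\bar z}=\delta\bigl[\tfrac{f}{\bar f}\,\overline{f_z}+\tfrac{\bar f}{f}\,f_z\bigr]$ by $f_z$ gives $\mu_f:=f_{\bar z}/f_z=\delta(\lambda+\bar\Lambda)=\delta\bar\Lambda(1+\zeta)$, hence $|\mu_f|=|\delta|\,|1+\zeta|\le 2|\delta|=\frac{|p-2|}{p}$. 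For \eqref{xea.4} it then remains to note $\frac{K(p)-1}{K(p)+1}=\frac{|p-2|}{p}$ (a short case check on $p\gtrless2$ using $K(p)=\max\{p-1,\frac1{p-1}\}$), and that equality is attained: evaluating $H$ and its Wirtinger derivatives on the positive real axis, where $H(\xi)=(1+\ez)\xi^{1/d}$ is essentially a power map, one finds $\mu_H$ is real with $|\mu_H|=\frac{|p-2|}{p}$ there, and $|\mu_f\circ H|=|\mu_H|$ since $f=H^{-1}$.

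For \eqref{xea.5}, write $g=|f|^\bz f=(f\bar f)^{\bz/2}f$ and compute the Wirtinger derivatives; factoring $(f\bar f)^{\bz/2-1}f$ out of each, one obtains $g_z=(f\bar f)^{\bz/2-1}f\bigl[(\tfrac{\bz}{2}+1)\bar f f_z+\tfrac{\bz}{2}f\,\overline{f_{\bar z}}\bigr]$ and $g_{\bar z}=(f\bar f)^{\bz/2-1}f\bigl[(\tfrac{\bz}{2}+1)\bar f f_{\bar z}+\tfrac{\bz}{2}f\,\overline{f_z}\bigr]$, so that dividing numerator and denominator by $(\tfrac{\bz}{2}+1)\bar f f_z$,
\[
\mu_g:=\frac{g_{\bar z}}{g_z}=\frac{\mu_f+\kappa\lambda}{1+\kappa\lambda\,\bar\mu_f},\qquad \kappa:=\frac{\bz}{\bz+2}
\]
(with $|\kappa|<1$ for $\bz>-1$). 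Substituting $\mu_f=\delta(\lambda+\bar\Lambda)$, $\bar\mu_f=\delta(\bar\lambda+\Lambda)$ and using $\lambda\bar\lambda=1$, the quotient collapses to $\mu_g=\bar\Lambda\,\dfrac{(\delta+\kappa)\zeta+\delta}{(1+\kappa\delta)+\kappa\delta\zeta}$, whence $|\mu_g|=R(\zeta):=\Bigl|\dfrac{(\delta+\kappa)\zeta+\delta}{(1+\kappa\delta)+\kappa\delta\zeta}\Bigr|$ for $\zeta$ on the unit circle.

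It remains to maximize $R$ and to check the maximum is attained. Writing $\zeta=e^{i\theta}$, $x=\cos\theta$, the quantity $R(e^{i\theta})^2$ is a ratio of affine functions of $x$ with positive denominator on $[-1,1]$ (since $1+2\kappa\delta=\frac{2(p+\bz)}{p(\bz+2)}>0$), and a short computation gives $\frac{d}{dx}R^2$ the constant sign of $\delta(1-\kappa^2)\bigl(\delta(1+\kappa^2)+\kappa\bigr)$; thus $R^2$ is monotone on $[-1,1]$ and is maximized at $\zeta=\pm1$, with $R(-1)=|\kappa|=\frac{|\bz|}{\bz+2}$ and $R(1)=\frac{|2\delta+\kappa|}{1+2\kappa\delta}=\frac{|p-\bz-2|}{p+\bz}$ (using $2\delta=\frac{2-p}{p}$). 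The explicit structure of $H$ shows $|\mu_f|$ attains both $0$ (on the rays where $\mu_H$ vanishes) and $\frac{|p-2|}{p}$ (on the coordinate axes), which via $|\mu_f|=|\delta|\,|1+\zeta|$ forces $\zeta$ to attain both $-1$ and $1$; when $p=2$ the map is conformal, $g=|w|^\bz w$, and $|\mu_g|\equiv\frac{|\bz|}{\bz+2}$ outright. Hence $\sup_{\mathbb C\setminus\{0\}}|\mu_g|=\max\bigl\{\frac{|p-\bz-2|}{p+\bz},\,\frac{|\bz|}{\bz+2}\bigr\}$. Finally, since $t\mapsto\frac{t-1}{t+1}$ is increasing on $(0,\infty)$, $\frac{K(p,\bz)-1}{K(p,\bz)+1}=\max_i\frac{t_i-1}{t_i+1}$ over $t_i\in\{\frac{p-1}{\bz+1},\frac{\bz+1}{p-1},\bz+1,\frac1{\bz+1}\}$, and these four values of $\frac{t_i-1}{t_i+1}$ are exactly $\pm\frac{p-\bz-2}{p+\bz}$ and $\pm\frac{\bz}{\bz+2}$; comparing with the displayed supremum yields \eqref{xea.5}, and \eqref{xea.4} is the special case $\bz=0$. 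The bulk of the work is the algebraic bookkeeping in deriving and collapsing $\mu_g$; the one genuinely non-algebraic point is that the extremal arguments $\zeta=\pm1$ are actually attained, which is where the explicit Iwaniec--Manfredi map $H$ (and not just the Beltrami identity) is needed.
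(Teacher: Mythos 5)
Your argument is correct and takes a genuinely different route from the paper's. The paper works throughout with the \emph{inverse} maps $H=f^{-1}$ and $G=g^{-1}$: it imports the Beltrami relation \eqref{xea.6} for $G$ from Bojarski--Iwaniec, gets the upper bound $|G_{\bar\xi}|\le\max\{|A|,|B|\}|G_\xi|$ (with $A=\frac{p-2-\bz}{p+\bz}$, $B=\frac{\bz}{\bz+2}$) by the triangle inequality, and verifies attainment by checking from the explicit formula for $G_\xi$ that $G_\xi$ is real on two specific families of rays. You instead stay with $f$ and $g$: you derive the Beltrami coefficient of $g=|f|^\bz f$ directly from the chain rule (equivalently, the composition law for $g=\Phi\circ f$ with the radial stretch $\Phi(w)=|w|^\bz w$, whose Beltrami coefficient is $\kappa w/\bar w$, $\kappa=\frac{\bz}{\bz+2}$), collapse $\mu_g$ to a M\"obius function of the single unimodular variable $\zeta=(f/\bar f)^2\,\overline{f_z}/f_z$, and show $|\mu_g|^2$ is monotone in $\cos\arg\zeta$, so the extremes are at $\zeta=\pm1$. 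This is more self-contained and makes transparent which of the two candidate constants in $K(p,\bz)$ comes from which alignment of $f_z$ against $f$. The one spot you compress is attainment for $p\ne2$: that $|\mu_f|$ hits both $0$ and $\frac{|p-2|}p$ (so $\zeta$ hits both $-1$ and $1$) does hold, but needs the explicit Iwaniec--Manfredi $H$ --- the paper's formula for $H_\xi$ shows $H_\xi\in\rr$ on the coordinate axes (giving $|\mu_H|=\frac{|p-2|}p$ there), while $H_{\bar\xi}=0$ on the diagonals $\arg\xi\in\{\pm\pi/4,\pm3\pi/4\}$ (one should note this uses $(\frac1d+1)+(\frac1d-3)\ez\ne0$, which simplifies to $2+6d^2>0$), and $|\mu_f(z)|=|\mu_H(f(z))|$ since $f,H$ are mutually inverse. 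Spelling one of these out would close the proof. (Minor slip: for $p=2$ you have $g=|f|^\bz f=|z|^\bz z$, not $|w|^\bz w$; the value $|\mu_g|\equiv\frac{|\bz|}{\bz+2}$ is right regardless.)
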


\begin{proof} Since  $H(\xi)$ is the inverse of $f(z)$, \eqref{xea.4} is equivalent to   $$\sup_{\mathbb C\setminus\{0\}} \frac{|H_{\bar \xi}|}{|H_{  \xi}|}= \frac{|p-2|}p.$$
 We already have  $$ |H_{\bar \xi}| =\frac12\left|\frac{\xi}{\bar \xi } H_\xi+\frac{\bar\xi}{ \xi } \bar H_\xi\right |\le\frac{|p-2|}p |H_{  \xi}| \quad \mbox{in $\mathbb C\setminus\{0\}$}.$$
 Taking the derivative $\partial_\xi$ on both sides of \eqref{xea.1}

one has  \begin{align*}H_\xi
 &= \frac12|\xi|^{1/d-1}\left[\left(\frac1d+1\right) + \left (\frac1d-3\right)\ez\frac{|\xi|^4}{\xi^ 4 }\right].
 \end{align*}
If $\xi\in\rr$, then    $H_\xi(\xi)\in\rr$, and hence, \eqref{xea.1} gives $H_{\bar \xi}=\frac{p-2}p H_\xi$
 as desired.

Next, for $\bz>-1$,
write $g=|f|^{\bz}f$ and $G=g^{-1}$. By \cite[Section 3]{bi87} one has
$$g_{\bar z}=-\frac12\left(\frac{p-2-\bz}{p+\bz}+\frac {\bz}{\bz+2}\right)\frac{\bar g}{g}g_z -\frac12\left(\frac{p-2-\bz}{p+\bz}-\frac {\bz}{\bz+2}\right)\frac{ g}{\bar g}\bar g_z $$
and hence
\begin{equation}\label{xea.6}G_{\bar \xi}= \frac12\left(\frac{p-2-\bz}{p+\bz}+\frac {\bz}{\bz+2}\right)\frac{\bar \xi}{\xi}G_\xi +\frac12\left(\frac{p-2-\bz}{p+\bz}-\frac {\bz}{\bz+2}\right)\frac{ \xi}{\bar \xi}\bar G_\xi.
\end{equation}
Thus,
$$
\sup_{\mathbb C\setminus\{0\}}\frac{|G_{\bar\xi}|}{|G_\xi|}\le  \max\left\{ \frac{|p-2-\bz|}{p+\bz},  \frac {|\bz|}{\bz+2} \right\} =\frac{K(p,\bz)-1}{K(p,\bz)+1}
$$
with  $K(p,\bz)$ be as in \eqref{xea.5}.
Moreover, note that
$$G(\xi)=H(|\xi|^{-\bz/(\bz+1)}\xi)=\left (\frac{\xi}{|\xi|}+\ez \frac{|\xi|^3}{\xi^3}  \right) |\xi|^{-1/(\bz+1)d} \quad\forall \xi\in\mathbb C,$$
and hence
$$G_\xi(\xi)
 = \frac12|\xi|^{1/(\bz+1)d-1}\left[\left(\frac1{(\bz+1)d}+1\right) + \left (\frac1{(\bz+1)d}-3\right)\ez\frac{|\xi|^4}{\xi^{4}}\right].$$
 If $ \frac{|p-2-\bz|}{p+\bz} \ge  \frac {|\bz|}{\bz+2} $, for $\xi\in\rr$  we have   $ G_\xi(\xi)\in\rr$, and therefore,  \eqref{xea.6} gives
 $G_{\bar \xi}(\xi)=   \frac{p-2-\bz}{p+\bz}  G_\xi(\xi) $ as desired.
 If $\frac{|p-2-\bz|}{p+\bz}<\frac {|\bz|}{\bz+2}$, for $\xi\in\mathbb R-i\rr$
 we have $\bar \xi=i\xi$, $G_\xi(\xi)\in\rr$ and
  $$\frac\xi{\bar \xi}G_\xi(\xi) = -\frac{\bar \xi}\xi \bar G_\xi(\xi)= -iG_\xi(\xi),$$
  which together with \eqref{xea.6} gives
    $G_{\bar \xi}(\xi)=   \frac{ \bz}{2+\bz}  iG_\xi(\xi) $
   as desired.
\end{proof}

Lemma \ref{lema.3} gives the sharpness of   constants in \eqref{xe1.9}.
\begin{remapp} \label{remA.4}\rm By a standard calculation,   \eqref{xea.4} gives
$$\esup_{\mathbb C}\frac{2\big[|f_z|^2+|f_{\bar z}|^2\big]}{|f_z|^2-|f_{\bar z}|^2}=\frac{(p-1)^2+1}{p-1}.$$
Since  $|D^2w|^2=2[|f_z|^2+|f_{\bar z}|^2] $ and $-\det D^2w=|f_z|^2-|f_{\bar z}|^2$, we write this as
$$\esup_{\mathbb C}\frac{|D^2w|^2}{-\det D^2w}=\frac{(p-1)^2+1}{p-1}=(p-1)+\frac1{p-1}.$$
Thus the constant in \eqref{xe1.9} is sharp. Note that $(p-1)+\frac1{p-1}$ converges to $\fz$ as $p\to\fz$.

For $\bz>-1$, in a similar way, \eqref{xea.5} gives
$$  \esup \frac{|D[|Dw|^\bz Dw]|^2}{-\det D[|Dw|^\bz Dw]}=
\frac{K(p,\bz)^2+1}{K(p,\bz)}=K(p,\bz)+ \frac{1}{K(p,\bz)},$$
 and hence, the  constant  in \eqref{xe1.9} is   sharp.
 We also note that $K(p,\bz)+ \frac{1}{K(p,\bz)}$ converges to $\fz$ as $p\to\fz$.
 \end{remapp}

\section*{Acknowledgment}
Y. Zhou would like to thank Professor Juan J. Manfredi  and Professor Nageswari Shanmugalingam for their kind suggestions and comments
 on the best constants as in Lemma \ref{lema.3} and Remark \ref{remA.4}.

\bigskip\bigskip

\noindent Hongjie Dong

\noindent Division of Applied Mathematics, Brown University, Providence RI 02912, USA

\noindent{\it E-mail }:  \texttt{Hongjie\_Dong@Brown.edu}

\bigskip
\noindent Fa Peng

\noindent
Academy of Mathematics and Systems Science, the Chinese Academy of Sciences, Beijing 100190, P. R. China

\noindent{\it E-mail }:  \texttt{fapeng@amss.ac.cn}

\bigskip

\noindent Yi Ru-Ya Zhang

\noindent
Academy of Mathematics and Systems Science, the Chinese Academy of Sciences, Beijing 100190, P. R. China

\noindent{\it E-mail }:  \texttt{yzhang@amss.ac.cn}
\bigskip

\noindent Yuan Zhou

\noindent School of Mathematical Sciences, Beijing Normal University, Haidian District Xinejikou Waidajie No.19, Beijing 100875, P. R. China

\noindent {\it E-mail }:
\texttt{yuan.zhou@bnu.edu.cn}

\end{document}